\documentclass[english,reqno]{amsart}

%standard packages
%------------------------------
%The following three packages are all included in amsart
\usepackage{amsmath}
\usepackage{amssymb} %to produce blackboard bold characters(\supsetneq)
%\usepackage{amsthm} %to define theorem-like structures
%-------------------------------
\usepackage{enumitem} %lay­out of the three ba­sic list en­vi­ron­ments
\usepackage{mathtools} %an extension package to amsmath + loading amsmath
\usepackage[table]{xcolor} %color of tables
\usepackage[all]{xy} %xymatrix commutative diagram
\usepackage{tikz} %pictures
\usepackage{tikz-cd}%implies
\usepackage{indentfirst} %Indent first paragraph
\usepackage{babel} %lan­guages
\usepackage{setspace} %set­ting the spac­ing be­tween lines

%setting of hyperlinks
\usepackage[colorlinks,linkcolor=red,anchorcolor=green,citecolor=blue]{hyperref} %color of hyperlinks
\hypersetup{linktocpage = true} %color of hyperlinks of TOC

%special packages and notations (rarely used)
\usepackage{rotating} %Rotate an image, table or paragraph
 %numbering

%various tables and their settings
\usepackage{ytableau} %young tableau
\usepackage{longtable} %long table
\newcolumntype{M}[1]{>{\centering\arraybackslash}m{#1}} %define dimension for long stable

%setting of fonts
\usepackage{mathpazo}
\usepackage{mathrsfs}
\DeclareFontFamily{OMS}{rsfs}{\skewchar\font'60}
\DeclareFontShape{OMS}{rsfs}{m}{n}{<-5>rsfs5 <5-7>rsfs7 <7->rsfs10 }{}
\DeclareSymbolFont{rsfs}{OMS}{rsfs}{m}{n}
\DeclareSymbolFontAlphabet{\scr}{rsfs}
\DeclareSymbolFontAlphabet{\scr}{rsfs}

% a beautiful font (with XeLatex)
\usepackage[T1]{fontenc}
%\usepackage[utf8]{inputenc}
%\usepackage[no-math]{fontspec}
%\setromanfont[Ligatures={Common}, Numbers={OldStyle}, Variant=01]{Linux Libertine O}

%pagestyle
\pagestyle{plain}%header empty; the footer contains the page number.
\sloppy %avoiding overfulls

%mathcal letters

%mathbb letters

\newcommand\bbC{{\mathbb C}}

\newcommand\bbP{{\mathbb P}}

\newcommand\bbZ{{\mathbb Z}}

%mathscr letters

\newcommand\sF{{\mathscr F}}

\newcommand\sO{{\mathscr O}}

%Special Abb.

%Objects in math

\DeclareMathOperator*{\Sym}{Sym}

%Moduli spaces and objects attacted

%sheaves operators

%math operators

%Morphisms

%Theorem type newenviroment
% Theorem type environments
\theoremstyle{plain}
\newtheorem{thm}{Theorem}[section]
\newtheorem{lemma}[thm]{Lemma}
\newtheorem{prop}[thm]{Proposition}
\newtheorem{cor}[thm]{Corollary}

\newtheorem{conj}[thm]{Conjecture}

\newtheorem{problem}[thm]{Problem}

\theoremstyle{definition}
\newtheorem{example}[thm]{Example}
\newtheorem{remark}[thm]{Remark}

%itemize&enumerate-->nonindent
\setlist[itemize]{leftmargin=*}
\setlist[enumerate]{leftmargin=*}

%numbering of lists
 %numbering of enumerate
 %subenumerate
\numberwithin{equation}{section} %numbering of equations
 %subsection

%depth of TOC
\setcounter{tocdepth}{1} %include till subsections in TOC

%quotient
%\newcommand{\bigslant}[2]{{\raisebox{.2em}{$#1$}\left/\raisebox{-.2em}{$#2$}\right.}}

\makeatletter

\ifnum\@ptsize=0 \addtolength{\hoffset}{-0.3cm} \fi \ifnum\@ptsize=2 \addtolength{\hoffset}{0.5cm} \fi

%informations of paper
\title{Strictly nef vector bundles and characterizations of $\bbP^n$}
\date{\today}

\subjclass[2010]{14H30,14J40,14J60,32Q57}
\keywords{strictly nef, ample, hyperbolicity}

\author{Jie Liu}

\address{Jie Liu, Morningside Center of Mathematics, Academy of Mathematics and Systems Science, Chinese Academy of Sciences, Beijing, 100190, China}
\email{jliu@amss.ac.cn}

\author{Wenhao Ou}

\address{Wenhao Ou, Institute of Mathematics, Academy of Mathematics and Systems Science, Chinese Academy of Sciences, Beijing, 100190, China}
\email{wenhaoou@amss.ac.cn}

\author{Xiaokui Yang}

\address{Xiaokui Yang, Department of Mathematics and Yau Mathematical Sciences Center, Tsinghua University, Beijing, 100084, China}
\email{xkyang@mail.tsinghua.edu.cn}

\begin{document}

\begin{abstract}  In this note, we give a brief exposition on the differences and similarities between strictly nef and ample vector bundles, with particular focus on the circle of  problems surrounding the geometry of projective manifolds  with strictly nef bundles.
\end{abstract}

\maketitle

\tableofcontents

\vspace{-0.2cm}

\section{Introduction}

Let $X$ be a complex projective manifold. A line bundle $L$ over $X$ is said to be \emph{strictly nef} if   $$L\cdot
C>0$$ for each irreducible curve $C\subset X$.  This notion is also called "numerically positive" in literatures (e.g. \cite{Hartshorne1970}).  The Nakai-Moishezon-Kleiman criterion asserts that  $L$ is ample if and only if
$$ L^{\dim Y}\cdot Y>0 $$ for every positive-dimensional irreducible subvariety $Y$ in $X$.  Hence, ample line bundles are strictly nef. In 1960s, Mumford constructed a number of  strictly nef but non-ample line bundles over  ruled surfaces (e.g. \cite{Hartshorne1970}), and they are  tautological line bundles of stable vector bundles of degree zero over smooth curves of genus $g\geq 2$. By using the terminology of Harshorne (\cite{Hartshorne1966}), a vector bundle $E\rightarrow X$ is called strictly nef (resp. ample) if its tautological line bundle $\sO_{\bbP(E)}(1)$ is strictly nef (resp. ample). One can see immediately that the strictly nef vector bundles constructed by Mumford are actually \emph{Hermitian-flat}. Therefore, some functorial properties for ample bundles (\cite{Hartshorne1966}) are not valid for strictly nef bundles. In this note, we give a brief exposition on the differences and similarities between strict nefness and ampleness, and survey some recent progress on understanding the geometry of projective manifolds endowed with some strictly nef bundles.

Starting in the mid 1960’s, several mathematicians--notably Grauert, Griffiths  and Hartshorne (\cite{Grauert1962, Griffiths1965, Griffiths1969,Hartshorne1966})--undertook the task
of generalizing to vector bundles the theory of positivity for line bundles. One
of the goals was to extend to the higher rank setting as many as possible of the
beautiful cohomological and topological properties enjoyed by ample divisors. In the past  half-century, a number of  fundamental results have been established. For this rich topic,  we refer to the books \cite{Lazarsfeld2004,Lazarsfeld2004a} of Lazarsfeld and the references therein.

\subsection{Abstract strictly nef vector bundles.} Let's recall a criterion for
 strictly nef vector bundles (see \cite[Proposition 2.1]{LiOuYang2019})  which is analogous to the Barton-Kleiman criterion for nef vector bundles (e.g. \cite[Proposition ~6.1.18]{Lazarsfeld2004a}). This criterion will be used frequently in the sequel.

 \begin{prop}\label{cri0} Let $E$ be a vector bundle over a projective manifold $X$. Then the following conditions are equivalent.
    \begin{enumerate}
        \item[(1)] $E$ is  strictly nef.

        \item[(2)] For any    smooth projective curve $C$ with a  finite morphism $\nu:C\rightarrow X$,  and for any line bundle quotient $\nu^*(E)\to L$, one has $$ \deg L>0.  $$

    \end{enumerate}

 \end{prop}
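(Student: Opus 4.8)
The plan is to exploit the universal property of the projectivization $\bbP(E)$, which, in Grothendieck's convention where points of $\bbP(E)$ parametrize rank-one quotients of $E$ and $\sO_{\bbP(E)}(1)$ is the tautological quotient, sets up a dictionary between the two conditions. Precisely, if $\morp{\nu}{C}{X}$ is any morphism from a smooth projective curve, then giving a line bundle quotient $\nu^*E\to L$ is the same as giving a morphism $\morp{\tilde\nu}{C}{\bbP(E)}$ over $X$ (that is, with $\pi\circ\tilde\nu=\nu$, where $\morp{\pi}{\bbP(E)}{X}$ is the projection), and under this correspondence one has $L\cong\tilde\nu^*\sO_{\bbP(E)}(1)$. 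Thus $\deg L$ becomes an intersection number on $\bbP(E)$, and the whole statement reduces to comparing $\deg L$ with $\sO_{\bbP(E)}(1)\cdot\Gamma$ for the relevant curves $\Gamma$ via the projection formula.

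For $(1)\Rightarrow(2)$, I would start from a quotient $\nu^*E\to L$ with $\nu$ finite, produce $\tilde\nu$ as above, and note that since $\nu=\pi\circ\tilde\nu$ is finite it is in particular non-constant, so the image $\Gamma\defeq\tilde\nu(C)$ is an irreducible curve in $\bbP(E)$. The projection formula then gives $\deg L=\deg(\tilde\nu^*\sO_{\bbP(E)}(1))=d\cdot(\sO_{\bbP(E)}(1)\cdot\Gamma)$, where $d\geq 1$ is the degree of the finite map $\morp{\tilde\nu}{C}{\Gamma}$; strict nefness of $E$ forces $\sO_{\bbP(E)}(1)\cdot\Gamma>0$, whence $\deg L>0$.

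For the converse $(2)\Rightarrow(1)$, I would take an arbitrary irreducible curve $\Gamma\subset\bbP(E)$ and split into two cases according to whether $\Gamma$ is vertical or horizontal. If $\pi(\Gamma)$ is a point, then $\Gamma$ lies in a single fibre $\bbP(E_x)$, a projective space on which $\sO_{\bbP(E)}(1)$ restricts to a hyperplane bundle; since the latter is ample we get $\sO_{\bbP(E)}(1)\cdot\Gamma>0$ for free. If instead $\pi(\Gamma)$ is a curve, I would let $\morp{n}{C}{\Gamma}$ be the normalization and let $\tilde\nu$ be the composite $C\xrightarrow{n}\Gamma\hookrightarrow\bbP(E)$, with $\nu=\pi\circ\tilde\nu$; then $\nu$ factors as $C\to\pi(\Gamma)\hookrightarrow X$ and is therefore finite, and the induced quotient $\nu^*E\to L\cong\tilde\nu^*\sO_{\bbP(E)}(1)$ satisfies $\deg L=\sO_{\bbP(E)}(1)\cdot\Gamma$ because the normalization has degree one. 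By hypothesis (2) this degree is positive, and we conclude.

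The routine translation between quotients and morphisms is standard, so I expect the only genuinely delicate point to be the \emph{vertical-curve case} in $(2)\Rightarrow(1)$: condition (2) only records finite maps to $X$, i.e. horizontal curves, and says nothing about curves contained in the fibres of $\pi$, so one must separately invoke the ampleness of $\sO_{\bbP(E)}(1)$ on each fibre. A secondary point worth stating carefully is that $C$ should be taken connected, so that the image curves are irreducible, and that the convention for $\bbP(E)$ and $\sO_{\bbP(E)}(1)$ is fixed at the outset, since the correspondence with quotients depends on it.
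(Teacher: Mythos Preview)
Your argument is correct and is the standard one, using the universal property of $\bbP(E)$ to translate between line-bundle quotients of $\nu^*E$ and lifts $\tilde\nu\colon C\to\bbP(E)$, together with the projection formula and the fibrewise ampleness of $\sO_{\bbP(E)}(1)$ to handle vertical curves.

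Note, however, that the paper does not actually supply a proof of this proposition: it is stated in the introduction with a reference to \cite[Proposition~2.1]{LiOuYang2019} (and is described as analogous to the Barton--Kleiman criterion \cite[Proposition~6.1.18]{Lazarsfeld2004a}), so there is nothing in the present paper to compare your argument against. Your write-up matches the expected proof in those references.
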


 \noindent Recall that for an ample line bundle $L$, one has the Kodaira vanishing theorem
 $$H^i(X,L^*)=0\ \  \text{for\ }\  i<\dim X.$$
 For an ample vector bundle $E$ with rank $r\geq 2$, one can only deduce $$H^0(X,E^*)=0$$ and the higher cohomology groups $H^i(X,E^*)$ ($i\geq 1$) may not  vanish.
 By using Proposition \ref{cri0}, we obtain a similar vanishing theorem  for strictly nef vector bundles.

 \begin{thm} Let $E$ be a strictly nef vector bundle over a projective manifold $X$. Then $$H^0(X,E^*)=0.$$
 \end{thm}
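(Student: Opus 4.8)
The plan is to turn a hypothetical nonzero section of $E^*$ into a line-bundle quotient of $E$ along a suitable curve, thereby violating Proposition \ref{cri0}. Suppose for contradiction that $H^0(X,E^*)\neq 0$. Since a global section of $E^*$ is the same as a homomorphism $E\to\sO_X$, i.e. $H^0(X,E^*)=\operatorname{Hom}_{\sO_X}(\sO_X,E^*)=\operatorname{Hom}_{\sO_X}(E,\sO_X)$, such a section furnishes a nonzero morphism $\psi\colon E\to\sO_X$, whose image is a nonzero ideal sheaf $\mathcal I\subseteq\sO_X$. Writing $\sO_X(-D)=\mathcal I^{\vee\vee}$ for the reflexive hull, the inclusion $\mathcal I\subseteq\sO_X$ passes to reflexive hulls and shows that $D\geq 0$ is effective; moreover $\psi$ factors through a morphism $\bar\psi\colon E\to\sO_X(-D)$ with $\im\bar\psi=\mathcal I$, so that the cokernel of $\bar\psi$ is supported on a closed set $W$ with $\codim_X W\geq 2$. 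In particular $\bar\psi$ is surjective over $X\setminus W$.

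Next I would produce the test curve. With $n=\dim X$, fix very ample divisors and let $C=H_1\cap\cdots\cap H_{n-1}$ be a general complete intersection curve. By Bertini, $C$ is smooth and irreducible; since $\codim_X W\geq 2$, a general such $C$ is disjoint from $W$ and (if $D\neq 0$) not contained in the support of $D$. Restricting $\bar\psi$ to $C$ then yields a surjection of vector bundles $\bar\psi|_C\colon E|_C\twoheadrightarrow L$, where $L=\sO_X(-D)|_C$ is a line bundle with $\deg L=-(D\cdot C)\leq 0$, the inequality holding because $D$ is effective. Taking $\nu\colon C\hookrightarrow X$ to be the (finite) inclusion, the pair $(\nu,L)$ violates condition (2) of Proposition \ref{cri0}, giving the desired contradiction. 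When $\dim X=1$ the argument simplifies: $W=\varnothing$ and one takes $C=X$ directly, so that $\psi$ already exhibits $\sO_X(-D)$ as a quotient of degree $-\deg D\leq 0$.

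The main obstacle is precisely that $\psi$ need not be surjective: its image is only an ideal sheaf, so one cannot read off a line-bundle quotient on an arbitrary curve. Extracting the divisorial part $D$ and isolating the codimension-$\geq 2$ locus $W$ where surjectivity genuinely fails is what reduces the situation to a single line bundle $L$, and choosing $C$ general enough to avoid $W$ while still pairing nonnegatively with $D$ is the delicate point; it is here that Bertini and the effectivity of $D$ are used. Everything else is formal once the quotient $E|_C\twoheadrightarrow L$ with $\deg L\leq 0$ is in hand.
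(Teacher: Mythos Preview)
Your argument is correct, but it takes a different route from the paper's. The paper invokes \cite[Proposition~1.16]{DemaillyPeternellSchneider1994}: since $E$ is nef, any nonzero section $\sigma\in H^0(X,E^*)$ is \emph{nowhere vanishing}, so it defines a trivial subbundle $\sO_X\hookrightarrow E^*$, hence a trivial quotient $E\twoheadrightarrow\sO_X$ on all of $X$; one then concludes directly from the fact that quotient bundles of strictly nef bundles are strictly nef (Proposition~\ref{basicproperties}(2)). Your approach sidesteps the non-vanishing input entirely: you allow the image of $\psi\colon E\to\sO_X$ to be an arbitrary ideal sheaf, extract its divisorial part $\sO_X(-D)$, and then use Bertini to find a complete intersection curve $C$ avoiding the codimension-$\geq 2$ locus where $\bar\psi$ fails to surject, so that $E|_C\twoheadrightarrow\sO_X(-D)|_C$ is a genuine line-bundle quotient of degree $-D\cdot C\leq 0$, contradicting Proposition~\ref{cri0}. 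The paper's proof is shorter and cleaner once one is willing to quote the Demailly--Peternell--Schneider non-vanishing; yours is more self-contained and illustrates directly how the Barton--Kleiman criterion does the work.
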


\noindent It worths to point out that for a strictly nef vector bundle $E$ with rank $r\geq 2$, the cohomology group $H^0(X,\mathrm{Sym}^{\otimes k} E^*)$ may not vanish for $k\geq 2$, which is significantly different from properties of ample vector bundles. Indeed, the strict nefness is not closed under tensor product, symmetric product or exterior product of vector bundles. These will be discussed in Section \ref{basic}.\\

 It is well-known that vector bundles over $\bbP^1$ split into direct sums of line bundles. By using Proposition \ref{cri0} again, one deduces that strictly nef vector bundles over $\bbP^1$ are  ample. In \cite[Theorem~3.1]{LiOuYang2019}, the following result is obtained.
 \begin{cor} If $E$ is a strictly nef vector bundle over an elliptic curve $C$, then $E$ is ample.
 \end{cor}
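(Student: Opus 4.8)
The plan is to translate both strict nefness and ampleness into statements about slopes, and then exploit the fact that on a genus-one curve the slope-zero semistable bundles are especially simple. I will use two standard criteria for a vector bundle $E$ on a smooth projective curve $C$: that $E$ is nef if and only if $\mu_{\min}(E)\geq 0$, and that $E$ is ample if and only if $\mu_{\min}(E)>0$, where $\mu_{\min}(E)$ denotes the slope of the minimal-slope quotient in the Harder--Narasimhan filtration of $E$. Both follow from the Barton--Kleiman-type numerical criteria together with the fact that, over $\bbC$, semistability is preserved under pullback by finite morphisms and slopes scale by the degree of the cover; this is exactly what lets one replace the quotient-line-bundle conditions over all finite covers $C'\to C$ (as in Proposition \ref{cri0}) by a single condition on $\mu_{\min}(E)$.

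I would argue by contradiction, assuming $E$ is strictly nef but not ample. Since strictly nef implies nef, the first criterion gives $\mu_{\min}(E)\geq 0$; since $E$ is not ample, the second gives $\mu_{\min}(E)\leq 0$. Hence $\mu_{\min}(E)=0$, and the minimal-slope quotient $E\twoheadrightarrow Q$ in the Harder--Narasimhan filtration is a semistable bundle of slope $0$, that is, of degree $0$. The goal is now to produce from $Q$ a quotient \emph{line} bundle of degree $0$, which will violate strict nefness outright.

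Here is where genus one enters decisively. By Atiyah's classification, a stable bundle of rank $r$ and degree $d$ on an elliptic curve exists only when $\gcd(r,d)=1$; for slope $0$ this forces $r=1$, so the only stable bundles of slope $0$ are the degree-zero line bundles. Consequently a Jordan--Hölder filtration of the slope-zero semistable bundle $Q$ has all of its graded pieces equal to degree-zero line bundles, and in particular $Q$ admits a surjection onto a degree-zero line bundle $L$. Composing with $E\twoheadrightarrow Q$ gives a quotient line bundle $E\twoheadrightarrow L$ with $\deg L=0$; applying Proposition \ref{cri0} with the finite morphism $\nu=\id_C$ then contradicts the strict nefness of $E$, completing the proof.

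The main obstacle is precisely this last structural input, and it is genuinely a genus-one phenomenon. On a curve of genus $g\geq 2$ there exist stable bundles of slope $0$ and rank $\geq 2$, and these are strictly nef but not ample --- this is the source of Mumford's examples discussed in the introduction. Thus any correct proof must invoke the fact that an elliptic curve carries no such higher-rank stable bundle of slope $0$, equivalently that every slope-zero semistable bundle on it is built entirely out of degree-zero line bundles; verifying and correctly applying this is the crux of the argument, whereas the slope reformulation of nefness and ampleness is routine.
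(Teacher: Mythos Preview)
Your argument is correct. The paper itself does not supply a proof of this corollary; it merely records the result and attributes it to \cite[Theorem~3.1]{LiOuYang2019}, so there is no in-text proof to compare against line by line. That said, your approach is exactly the expected one and is in the spirit of the surrounding discussion: the paper explicitly invokes Proposition~\ref{cri0} to handle the $\bbP^1$ case and implicitly points toward the same criterion for elliptic curves, and your final contradiction is obtained precisely via Proposition~\ref{cri0} with $\nu=\id_C$.

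The key substantive input you isolate --- that on an elliptic curve every stable bundle of slope zero has rank one, hence any semistable degree-zero bundle has a degree-zero line bundle quotient --- is exactly Atiyah's classification, and this is the genus-one fact that distinguishes the corollary from Mumford's examples in genus $g\geq 2$ discussed immediately afterward in the paper. Your reduction to $\mu_{\min}(E)=0$ via the standard slope criteria for nefness and ampleness on curves is routine and correctly stated. There is no gap.
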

 \noindent  As we mentioned before, over smooth curves of genus $g\geq 2$,  there are strictly nef vector bundles  which are Hermitian-flat.  There also exist strictly nef but non-ample  bundles on some rational surfaces (\cite{LanteriRondena1994, Chaudhuri2020}).  It is still a challenge to investigate  strictly nef vector bundles over higher dimensional projective manifolds.
 We propose the following conjecture, which is also the first step to understand such bundles.

 \begin{conj}\label{LOYconjecture1} Let $E$ be a strictly nef vector bundle over a projective manifold $X$.   If $-K_X$ is nef, then $\det E$ is ample.
 \end{conj}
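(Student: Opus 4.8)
The plan is to verify ampleness of the nef line bundle $\det E$ through Kleiman's criterion, reducing it to the positivity $\det E\cdot\gamma>0$ for every nonzero class $\gamma\in\NEX$, and to organize the Mori cone according to the sign of $K_X$. The essential point to bear in mind is that a strictly nef bundle restricted to a smooth curve need not be ample---this is exactly Mumford's phenomenon, which is realized on curves of genus $g\ge2$---so that a naive curve-by-curve argument cannot succeed and the hypothesis that $-K_X$ is nef must intervene in an essential way; concretely, it should rule the Mumford-type curves out of the part of the cone that matters.

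For the $K_X$-negative part of $\NEX$, the Cone Theorem provides rational curves spanning the extremal rays, and pulling $E$ back along the normalization $\bbP^1\to C$ together with the fact that strictly nef bundles on $\bbP^1$ are ample yields $\det E\cdot C=\deg(E|_{\bbP^1})>0$. When $X$ is Fano this already concludes the argument, since then $\NEX$ is rational polyhedral and every extremal ray is of this type. In general, the difficulty is concentrated entirely in the face $\NEX\cap\{K_X=0\}$, which is nontrivial precisely when $-K_X$ is nef but not ample.

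To control this face I would use the structure theory of manifolds with nef anticanonical class (Cao--H\"oring): the Albanese morphism $\alpha\colon X\to\mathrm{Alb}(X)$ is a smooth surjective fibration with rationally connected fibers. This suggests splitting curves according to whether they are contracted by $\alpha$ or dominate their image. A curve inside a fiber reduces the question to the rationally connected case, where the abundance of rational curves---on each of which $E$ is ample---controls $\det E$; this is the base case of the reduction. A curve dominating an elliptic curve in $\mathrm{Alb}(X)$ is handled by the Corollary above, and in general one is led to the abelian-variety analogue, namely that strictly nef bundles on abelian varieties are ample. Finally, to upgrade positivity on actual curve classes to genuine ampleness---the step where strict nefness and ampleness truly differ---one can invoke the circle of ideas around Serrano's conjecture: once $\det E$ is known to be strictly nef, ampleness of $K_X+t\det E$ for $t\gg0$ would give ampleness of $\det E$ through the identity $t\det E=(K_X+t\det E)+(-K_X)$, a sum of an ample and a nef class.

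The main obstacle is the remaining alternative: curves of genus $g\ge2$ dominating a high-genus curve $C'\subset\mathrm{Alb}(X)$. Such curves do occur, $-K_X$ is numerically trivial along them, and so the Mumford mechanism is a priori available---$E|_C$ could be numerically flat with $\det(E|_C)$ of degree zero. Excluding this, that is, proving that the global strict nefness of $E$ together with $-K_X$ nef forbids $E$ from restricting to a Mumford-type flat bundle on any curve, is the crux of the problem and the reason the statement is only a conjecture in general. The difficulty is aggravated by the fact that the Cao--H\"oring fibration need not split as a product over the Albanese, so the positive rational directions coming from the fibers cannot be cleanly separated from the potentially flat directions coming from the base.
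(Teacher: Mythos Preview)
The statement you are attempting is a \emph{conjecture} in the paper, not a theorem; there is no proof in the paper to compare against. Your proposal is honest about this: you correctly isolate the obstruction---curves of genus $g\ge 2$ in the face $\NEX\cap\{K_X=0\}$, along which $E$ could in principle restrict to a Mumford-type Hermitian-flat bundle with $\deg(\det E|_C)=0$---and you say explicitly that excluding this is ``the reason the statement is only a conjecture in general.'' So your write-up is a discussion of the difficulty, not a proof, and the gap you name is genuine and unresolved.

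For context, the paper does prove the special case where $-K_X$ is nef and \emph{big}, but by a route entirely different from yours. Rather than analysing $\NEX$ via the Cone Theorem and the Cao--H\"oring Albanese fibration, the paper passes to $Y=\bbP(E)$, sets $L=\sO_E(m)\otimes\pi^*\det E$, and observes that $L-K_Y=\sO_E(m+r)\otimes\pi^*(-K_X)$ is nef and big (bigness uses $(-K_X)^n>0$); then the base point free theorem makes $L$ semi-ample, hence ample since $L$ is strictly nef, and positivity of direct images $\pi_*(K_{Y/X}\otimes L^{\otimes k})$ forces $\det E$ ample. This argument visibly breaks when bigness of $-K_X$ is dropped, because the top self-intersection $(L-K_Y)^{\dim Y}$ need no longer be positive and the base point free theorem does not apply. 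The paper also records (Proposition~\ref{SerranoconjectureimpliesLOYconjecture}) that Serrano's conjecture implies the full statement---the same reduction you invoke in your last paragraph---so on that point you and the paper agree.
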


 \noindent Although this conjecture is shown to be a consequence of the "\emph{generalized abundance conjecture}" (e.g. \cite{LazicPeternell2020,LazicPeternell2020a}), we still expect some other straightforward solutions. Indeed, we get  a partial answer to it.

 \begin{thm} Let $E$ be a strictly nef vector bundle over a projective manifold $X$.   If $-K_X$ is nef and big, then $\det E$ is ample.
 \end{thm}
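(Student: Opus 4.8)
The plan is to verify Kleiman's ampleness criterion for $\det E$. Since a strictly nef bundle is in particular nef, the line bundle $\det E$ is nef, hence nonnegative on the whole Mori cone $\NE{X}$; what remains is to show that it is strictly \emph{positive} on $\NE{X}\setminus\{0\}$. My strategy is to reduce this to a finite check on extremal rays, each of which I will exhibit as generated by a rational curve, on which $E$ (and therefore $\det E$) is already forced to be positive.

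First I would exploit the hypothesis that $-K_X$ is nef and big. A smooth projective $X$ with $-K_X$ nef and big is a weak Fano manifold, and such a manifold is of Fano type: by Kodaira's lemma one writes $-K_X\sim_{\bbQ}A+D$ with $A$ ample and $D\geq 0$, and after passing to a general member of a sufficiently divisible multiple one produces a boundary $\Delta\geq 0$ with $(X,\Delta)$ klt and $-(K_X+\Delta)$ ample. Since $-(K_X+\Delta)$ is ample, every nonzero class of $\NE{X}$ is $(K_X+\Delta)$-negative, so the Cone Theorem for the klt pair $(X,\Delta)$ shows that $\NE{X}$ is rational polyhedral and that each of its finitely many extremal rays is spanned by a rational curve.

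Next I would evaluate $\det E$ on these extremal rays. Let $R=\bbR_{\geq 0}[C]$ be an extremal ray with $C$ a rational curve, and let $\nu\colon\bbP^1\to X$ be the normalization of $C$. By Proposition \ref{cri0}, every line bundle quotient of $\nu^*E$ has positive degree, so $\nu^*E$ is strictly nef on $\bbP^1$; as every bundle on $\bbP^1$ splits, this forces $\nu^*E\cong\bigoplus_i\sO_{\bbP^1}(a_i)$ with all $a_i>0$, i.e. $\nu^*E$ is ample. Consequently $\det E\cdot C=\deg\det\nu^*E=\sum_i a_i>0$. Thus $\det E$ is strictly positive on every extremal ray of the rational polyhedral cone $\NE{X}$, hence on all of $\NE{X}\setminus\{0\}$, and Kleiman's criterion yields that $\det E$ is ample.

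The main obstacle is the second step, the reduction to finitely many rational extremal rays. Once the Mori cone is known to be rational polyhedral and generated by rational curves, the positivity computation is immediate from the fact that strictly nef bundles on $\bbP^1$ are ample; but the polyhedrality is not formal, and the weak-Fano $\Rightarrow$ Fano-type implication I rely on ultimately rests on the minimal model program. This is precisely where the bigness of $-K_X$ is indispensable: without it $\NE{X}$ may acquire extremal rays spanned by curves of genus $\geq 2$---exactly the setting of Mumford's strictly nef, degree-zero bundles---on which $\det E$ can vanish, so that $\det E$ would be strictly nef but fail to be ample.
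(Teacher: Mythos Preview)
Your argument is correct, and it is genuinely different from the paper's. The paper does not work on $X$ at all in the higher-rank case: it passes to $Y=\bbP(E)$, observes that $L=\sO_E(m)\otimes\pi^*(\det E)$ is strictly nef and that $L-K_Y=\sO_E(m+r)\otimes\pi^*(-K_X)$ is nef and big, applies the base point free theorem on $Y$ to make $L$ semi-ample (hence ample), and then invokes Mourougane's positivity of direct images to conclude that $\pi_*(K_{Y/X}\otimes L^{\otimes k})\cong \mathrm{Sym}^{k_0}E\otimes(\det E)^{\otimes k_1}$ is ample, whence $\det E$ is ample. Your route---reduce weak Fano to a klt log Fano pair, use the Cone Theorem to get a rational polyhedral $\NE{X}$ generated by rational curves, and then test $\det E$ on those rational curves via the splitting on $\bbP^1$---stays entirely on $X$ and avoids the direct-image machinery. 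It is shorter and isolates exactly why bigness of $-K_X$ is essential: it forces every extremal class to be represented by a \emph{rational} curve, the one situation where strict nefness of $E$ automatically upgrades to ampleness. The paper's approach, on the other hand, yields the slightly stronger intermediate statement that $\sO_E(m)\otimes\pi^*(\det E)$ is ample on $\bbP(E)$. One small point: your sentence about ``passing to a general member of a sufficiently divisible multiple'' is not quite the mechanism that produces the klt boundary; the standard move is to write $-K_X\sim_{\bbQ}A+D$ with $A$ ample and $D\geq 0$, take $\Delta=\epsilon D$ for $0<\epsilon\ll 1$, and note that $-(K_X+\Delta)\sim_{\bbQ}(1-\epsilon)(-K_X)+\epsilon A$ is ample while $(X,\epsilon D)$ is klt because $X$ is smooth and the coefficients are small.
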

\noindent  We  refer to \cite{Hering2010,Biswas2019} for more details on positivity of equivariant vector bundles.

\subsection{The geometry of projective manifolds endow with strictly nef bundles.} Since the seminal works of Mori   and Siu-Yau
(\cite{Mori1979}, \cite{SiuYau1980}) on characterizations of projective spaces, it becomes apparent that the
positivity
of the tangent bundle of a complex projective manifold
carries important geometric information. In the past
decades,  many remarkable generalizations  have been
established, and for instances, Mok's uniformization theorem on compact K\"ahler manifold
with semipositive holomorphic bisectional curvature (\cite{Mok1988}) and  fundamental works of Campana,
Demailly, Peternell and Schneider (\cite{CampanaPeternell1991},
\cite{DemaillyPeternellSchneider1994}, \cite{Peternell1996})  on the
structure of projective manifolds with nef tangent bundles. For this comprehensive topic, we refer to \cite{Yau1974,Mori1979, SiuYau1980,Mok1988,CampanaPeternell1991,DemaillyPeternellSchneider1994,Peternell1996,DemaillyPeternellSchneider2001,MunozOcchettaSolaCondeWatanabeEtAl2015,Cao2016,LiOuYang2019,CaoHoering2019,CaoHoering2019,CampanaCaoPaun2019} and the references therein. \\

As we pointed out before,  strict nefness is a notion of positivity weaker than ampleness.   Even though there are significant differences between them, we still expect that  the strict nefness could play similar roles as ampleness in many  situations. The following result is obtained in \cite[Theorem~1.4]{LiOuYang2019} which extends Mori's Theorem.
\begin{thm} Let $X$ be a projective manifold. If $T_X$ is strictly nef, then $X$ is isomorphic to a projective space.
\end{thm}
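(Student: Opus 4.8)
The plan is to reduce to the Fano case, then extract a sharp lower bound on the anticanonical degree of rational curves from strict nefness, and finally invoke the Cho--Miyaoka--Shepherd-Barron characterization of projective space.

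First I would show that $X$ is a Fano manifold. Since $T_X$ is strictly nef it is in particular nef, so by the structure theory for manifolds with nef tangent bundle (Demailly--Peternell--Schneider \cite{DemaillyPeternellSchneider1994}) the Albanese morphism $\alpha\colon X\to \mathrm{Alb}(X)$ is a smooth surjective submersion, and up to a finite \'etale cover $X$ is a fibre bundle over a torus whose fibres are Fano manifolds with nef tangent bundle. The point is that strict nefness forces $\mathrm{Alb}(X)$ to be a point: if $q\defeq\dim \mathrm{Alb}(X)\geq 1$, then $T_{\mathrm{Alb}(X)}$ is trivial, so the differential of $\alpha$ yields a surjection $T_X\twoheadrightarrow \alpha^*T_{\mathrm{Alb}(X)}\cong \sO_X^{\oplus q}$. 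Choosing any finite morphism $\nu\colon C\to X$ from a smooth projective curve and composing with a coordinate projection produces a line bundle quotient $\nu^*T_X\twoheadrightarrow \sO_C$ of degree $0$, which contradicts Proposition \ref{cri0}. Hence $q=0$, the Albanese is trivial, and $X$ itself is the Fano fibre; in particular $X$ is Fano and is covered by rational curves.

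Next I would bound the anticanonical degree of rational curves from below. Let $C\subset X$ be any rational curve with normalization $f\colon \bbP^1\to X$, and write $f^*T_X\cong\bigoplus_{i=1}^n\sO_{\bbP^1}(a_i)$ with $a_1\geq\cdots\geq a_n$. Projection onto the last summand exhibits $\sO_{\bbP^1}(a_n)$ as a line bundle quotient of $f^*T_X$, so Proposition \ref{cri0} gives $a_n>0$ and hence $a_i\geq 1$ for all $i$. On the other hand the nonzero differential furnishes an inclusion $df\colon T_{\bbP^1}=\sO_{\bbP^1}(2)\hookrightarrow f^*T_X$, which forces $a_1\geq 2$. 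Therefore
\[
-K_X\cdot C=\deg f^*\det T_X=\sum_{i=1}^n a_i\geq 2+(n-1)=n+1 .
\]
As this holds for \emph{every} rational curve, the pseudoindex of the Fano manifold $X$ satisfies $i_X\geq n+1$. Since a Fano $n$-fold has pseudoindex at most $n+1$, with equality exactly for $\bbP^n$ (Cho, Miyaoka and Shepherd-Barron), I conclude $X\cong\bbP^n$.

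I expect the genuine obstacle to be the first step, namely producing the Fano structure. Strict nefness of $T_X$ does \emph{not} by itself make $\det T_X=-K_X$ strictly nef --- in Mumford's flat examples $\det$ is even trivial --- so one cannot simply argue that $K_X$ fails to be pseudo-effective and quote uniruledness; one really needs the Albanese submersion coming from nefness of $T_X$, together with the vanishing of the irregularity extracted above. Once Fano-ness is secured, the splitting computation and the appeal to the characterization of $\bbP^n$ are essentially formal.
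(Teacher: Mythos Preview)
Your argument is correct and follows what is essentially the standard route (and, as far as one can infer from the survey, the route of \cite{LiOuYang2019}): use the Demailly--Peternell--Schneider structure theory for nef tangent bundles to reduce to the Fano case, then extract the splitting bound $-K_X\cdot C\geq n+1$ on rational curves from Proposition~\ref{cri0}, and conclude via Cho--Miyaoka--Shepherd-Barron. Note that the present paper is a survey and does not itself contain a proof of this theorem; it cites \cite[Theorem~1.4]{LiOuYang2019}.

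One small point deserves tightening. You correctly recall that the DPS decomposition is only available \emph{after} a finite \'etale cover $\pi\colon\tilde X\to X$, but you then run the vanishing-of-irregularity argument on $X$ and immediately declare that ``$X$ itself is the Fano fibre''. What you actually need is $q(\tilde X)=0$. This follows by the same argument applied to $\tilde X$: since $\pi$ is \'etale one has $T_{\tilde X}\cong\pi^*T_X$, which is strictly nef by Proposition~\ref{basicproperties}(5), so the surjection $T_{\tilde X}\twoheadrightarrow \alpha^*T_{\mathrm{Alb}(\tilde X)}$ forces $q(\tilde X)=0$ and hence $\tilde X$ is Fano. Then either observe that Fano manifolds are simply connected (so $\tilde X=X$), or note that $-K_{\tilde X}=\pi^*(-K_X)$ and ampleness descends along finite surjective maps. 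Either way $X$ is Fano and your second step applies verbatim.

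Your closing remark is also well taken: one cannot shortcut Step~1 by claiming that $-K_X=\det T_X$ is strictly nef, since determinants of strictly nef bundles need not be strictly nef; the DPS Albanese submersion is genuinely needed to get uniruledness.
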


\noindent Therefore, $T_X$ is ample if and only if it is strictly nef.  However, this is not valid for cotangent bundles. Indeed, Shepherd-Barron proved in  \cite{ShepherdBarron1995} that there exists a projective surface whose cotangent bundle is strictly nef but not ample (see e.g. Example \ref{cotangentbundle}).\\

Let's consider manifolds with strictly nef canonical or anti-canonical bundles. Campana and Peternell proposed in
\cite[Problem~11.4]{CampanaPeternell1991} the following conjecture, which is still a major problem along this line.

\begin{conj} \label{CampanaPeternellConjecture1} Let $X$ be a  projective
    manifold. If $K_X^{-1}$ is strictly nef, then $X$
    is  Fano.
\end{conj}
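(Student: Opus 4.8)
The plan is to upgrade the curve-positivity of $K_X^{-1}$ to full Nakai--Moishezon--Kleiman positivity. A strictly nef divisor is in particular nef, so $-K_X$ lies in the closure of the ample cone, and by Kleiman's criterion amplitude fails only if $-K_X$ vanishes on some extremal ray of $\NEX$ that arises as a limit of curve classes without being represented by an honest curve. The entire difficulty is concentrated on these ``irrational'' boundary rays. The first reduction I would make is to the \emph{bigness} of $-K_X$: if $(-K_X)^{\dim X}>0$, then $-K_X$ is nef and big, the Kawamata--Shokurov base-point-free theorem makes it semiample, and the induced morphism $\phi\colon X\to X'$ contracts exactly the curves $C$ with $-K_X\cdot C=0$. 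Strict nefness forbids such curves, so $\phi$ is finite, hence an isomorphism, and $-K_X$ is the pullback of an ample class, therefore ample. So it suffices to prove $(-K_X)^{\dim X}>0$.

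Second, I would pin down the birational structure of $X$. If $X$ were not uniruled, then by the theorem of Boucksom--Demailly--P\u{a}un--Peternell the class $K_X$ would be pseudo-effective; since $-K_X$ is nef it is also pseudo-effective, and the pseudo-effective cone is salient, so $K_X$ and $-K_X$ both pseudo-effective forces $K_X\equiv 0$. But then $-K_X\cdot C=0$ for every curve $C$, contradicting strict nefness. Hence $X$ is uniruled. I would then pass to the maximal rationally connected fibration and, using the structure theory for manifolds with nef anticanonical bundle developed by Cao and Cao--H\"oring (the decomposition of the tangent sheaf and the analysis of the Albanese morphism as a smooth fibration), try to show that its base is a point, i.e. that $X$ is rationally connected. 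Morally, a positive-dimensional non-uniruled base $Z$ would satisfy $K_Z\equiv 0$ and produce a direction along which $-K_X$ degenerates, which one hopes to convert into an irreducible curve killed by $-K_X$.

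The final and hardest step is the bigness itself, equivalently the assertion that the nef divisor $-K_X$ has maximal numerical dimension $\nu(-K_X)=\dim X$. This is exactly where strict nefness must be used in an essential, non-formal way: Mumford's examples show that a merely strictly nef \emph{line bundle} can have numerical dimension $1$, so the argument must exploit that the class in question is the anticanonical bundle of a uniruled (conjecturally rationally connected) manifold. I expect this to be the main obstacle, and it is precisely why the statement is currently known only under the generalized abundance conjecture: abundance gives semiampleness of the nef $-K_X$ at once, after which the finiteness argument of the first paragraph closes the proof. An unconditional route would instead have to manufacture, from a hypothetical equality $(-K_X)^{\dim X}=0$, an explicit irreducible curve on which $-K_X$ vanishes; the natural place to hunt for it is the general fibre of the nef reduction (Iitaka-type) fibration attached to $-K_X$, where the restriction of $-K_X$ is numerically trivial.

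A subsidiary obstruction worth flagging is that a naive induction on dimension through subvarieties $Y\subset X$ does not work directly. Although the restriction of a strictly nef class stays strictly nef, strict nefness of $-K_X$ does \emph{not} descend to strict nefness of $-K_Y$: by adjunction $(-K_X)|_Y$ equals $-K_Y$ corrected by the determinant of the normal bundle $N_{Y/X}$, and this correction can be negative. Any inductive scheme must therefore propagate the normal-bundle terms, which is a further reason the bigness step resists a purely formal reduction to lower dimension.
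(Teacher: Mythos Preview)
The statement you are attempting to prove is labelled \emph{Conjecture} in the paper, not \emph{Theorem}: the paper offers no proof, because none is known in full generality. So there is no ``paper's own proof'' to compare against. What the paper does record is (i) that the conjecture is settled in dimensions $\leq 3$, (ii) that rational connectedness of $X$ has been established unconditionally (this is Theorem~\ref{LOYrationallyconnected1}, due to \cite{LiOuYang2019}), and (iii) that the full statement would follow from generalized abundance.

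Your proposal is not a proof but a strategy, and you are candid about this. The parts that are genuinely arguments are correct: the reduction ``$-K_X$ nef and big $\Rightarrow$ ample'' via the base-point-free theorem plus strict nefness is standard and valid, and the uniruledness via BDPP is fine. Your second step, aiming for rational connectedness through the MRC fibration and the Cao--H\"oring structure theory, is exactly the line that \cite{LiOuYang2019} carries out successfully; so that piece is not an obstacle but a theorem you may quote.

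The genuine gap is precisely where you locate it: passing from rational connectedness to $(-K_X)^{\dim X}>0$. Nothing in your outline supplies this. Your suggestion to look at the nef reduction of $-K_X$ and extract an honest curve with $-K_X\cdot C=0$ from a fibre is a reasonable heuristic, but it is not an argument: the nef reduction is only an almost holomorphic map, its general fibre need not contain any curve of $X$ on which $-K_X$ is numerically trivial, and Mumford-type phenomena show that strict nefness alone cannot force bigness. In short, your write-up is an accurate diagnosis of why the conjecture is open, not a proof of it; the missing idea---turning strict nefness of the \emph{anticanonical} class on a rationally connected manifold into bigness---remains missing.
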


\noindent This conjecture has been verified for projective manifolds of dimension $2$ in \cite{Maeda1993} and dimension $3$ in \cite{Serrano1995} (see also \cite{Uehara2000} and the references therein).
Recently, some progress has been achieved in \cite[Theorem~1.2]{LiOuYang2019}.

\begin{thm}\label{LOYrationallyconnected1}  If $K^{-1}_X$ is strictly nef, then $X$ is rationally connected.
\end{thm}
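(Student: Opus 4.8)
The plan is to deduce rational connectedness by analysing the maximal rationally connected (MRC) fibration and using strict nefness of $-K_X$ to force its base to be a point. First I would establish uniruledness. Since $-K_X$ is strictly nef it is in particular nef, hence pseudo-effective; if $K_X$ were also pseudo-effective, then both $K_X$ and $-K_X$ would be pseudo-effective, forcing $K_X\equiv 0$ numerically and contradicting $-K_X\cdot C>0$ for every irreducible curve $C$. Thus $K_X$ is not pseudo-effective, and by the theorem of Boucksom--Demailly--Paun--Peternell (BDPP) the manifold $X$ is uniruled. (The same observation gives $H^0(X,K_X)=0$.) Let $\pi\colon X\dashrightarrow Z$ be the MRC fibration, which is almost holomorphic; I would realize it as a morphism $f\colon X'\to Z$ on a smooth birational model $\mu\colon X'\to X$, with rationally connected general fibre. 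By Graber--Harris--Starr the base $Z$ is not uniruled, so $K_Z$ is pseudo-effective, again by BDPP. The goal becomes $\dim Z=0$.

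Second, I would split according to the bigness of $-K_X$. If $-K_X$ is nef and big, then it is semiample by the base-point-free theorem, and the associated morphism contracts $X$ onto a (possibly singular) Fano variety, which is rationally connected by the work of Zhang and Hacon--McKernan; as rational connectedness is a birational invariant, $X$ is rationally connected. The genuine content is therefore the boundary case in which $-K_X$ is nef but not big. Here I would invoke the structure theory for manifolds with nef anticanonical bundle (Cao, Cao--H\"oring): the positivity of $-K_X$ propagates to the MRC base and, combined with $K_Z$ pseudo-effective, shows that $K_Z$ is numerically trivial. This reduces the problem to excluding a positive-dimensional, canonically trivial, non-uniruled base $Z$.

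The hard part will be exactly this exclusion, and it is precisely where strict nefness, as opposed to mere nefness, must enter: $-K_X$ nef alone permits $K$-trivial factors (abelian varieties being the basic example), so the argument cannot be purely numerical. The naive strategy of lifting a movable curve $\tilde C\subset Z$ to a multisection $C'\subset X'$ by Graber--Harris--Starr and expanding $K_{X'}\cdot C'=K_{X'/Z}\cdot C'+f^*K_Z\cdot C'$ does \emph{not} close the argument: the base term $f^*K_Z\cdot C'$ is nonnegative while the relative term $K_{X'/Z}\cdot C'$ is negative because the fibres are rationally connected, and the two contributions cancel rather than yielding a curve with $-K_X\cdot C\le 0$. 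The crux is thus to produce a \emph{genuine} irreducible curve $C\subset X$ — not merely a numerical class, and lying off the $\mu$-exceptional locus so that $K_{X'}\cdot C'=K_X\cdot C$, which the almost-holomorphicity of $\pi$ makes possible — along which $-K_X$ is nonpositive, contradicting strict nefness. I expect this final step to require a finer positivity input, such as a direct-image theorem for relative pluricanonical sheaves in the spirit of Campana--Cao--P\u aun, or the known cases of the generalized abundance conjecture alluded to in the introduction, rather than an elementary intersection computation; this is the step I would budget the most effort for.
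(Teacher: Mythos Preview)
The paper you are working from is a survey: it does not prove this theorem at all, but merely records it and cites \cite[Theorem~1.2]{LiOuYang2019} (and its restatement as Theorem~\ref{LOYrationallyconnected}) for the proof. There is therefore no ``paper's own proof'' to compare your proposal against.

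That said, your outline is the standard route and matches the strategy of the cited reference: uniruledness via BDPP, reduction to the MRC quotient, and then the exclusion of a positive-dimensional non-uniruled base using positivity of direct images of (twisted) relative pluricanonical sheaves. You have correctly located the only substantive step --- producing an actual irreducible curve $C\subset X$ with $-K_X\cdot C\le 0$ over a putative $K$-trivial MRC base --- and correctly named the relevant machinery (direct-image positivity in the spirit of P\u{a}un--Takayama and Campana--Cao--P\u{a}un; cf.\ also the structure results for nef anticanonical bundle in \cite{Cao2016,CaoHoering2019}). Your case split on bigness is harmless but not really needed: the direct-image argument over the MRC base handles both cases uniformly, and the big case does not require the detour through Zhang or Hacon--McKernan once one knows $\dim Z=0$.

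What remains genuinely incomplete in your proposal is exactly the step you flagged: you have not yet said \emph{how} the direct-image positivity yields the desired curve. In the cited proof one shows, roughly, that for a very general fibre $F$ and a suitable $m$, the pseudo-effectivity of the direct image of $mK_{X/Z}$-type sheaves together with $K_Z$ pseudo-effective forces $K_X|_F$-twists to have sections or at least nontrivial numerical positivity along horizontal curves, which is then played off against $-K_X$ strictly nef to reach a contradiction. Filling in this mechanism is the whole content of the theorem; everything else in your plan is preparatory.
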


\noindent  Indeed, we show in \cite{LiuOuYang2} that if $(X,\Delta)$ is a projective simple normal crossing pair and $-(K_X+\Delta)$ is strictly nef, then $X$ is rationally connected. 

The following dual version of Conjecture \ref{CampanaPeternellConjecture1} is actually a consequence of the abundance conjecture.

\begin{conj} If $K_X$ is strictly nef, then $K_X$ is ample.
    \end{conj}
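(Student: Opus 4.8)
The plan is to derive ampleness of $K_X$ from its strict nefness under the assumption of the abundance conjecture, so the argument naturally splits into one deep input and an elementary geometric deduction. First I note that a strictly nef line bundle is in particular nef, since $K_X\cdot C>0$ for every irreducible curve $C$ forces $K_X\cdot C\geq 0$; thus $X$ carries a nef canonical bundle. The abundance conjecture then asserts that $K_X$ is semiample, i.e. that $|mK_X|$ is base-point free for some integer $m>0$. This is the only non-elementary ingredient, and it is precisely the step I expect to be the genuine obstacle, as abundance remains open in dimension $\geq 4$; everything after it is formal once semiampleness is granted.

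Granting semiampleness, I take $m$ sufficiently divisible and let $\phi\colon X\to Y$ be the morphism onto its image $Y$ defined by $|mK_X|$, so that $Y$ is a normal projective variety and $mK_X=\phi^*A$ for an ample line bundle $A$ on $Y$ (the restriction to $Y$ of the hyperplane class). The key point is to show that $\phi$ is finite. Suppose not: since $X$ is projective, $\phi$ is proper, so failing to be finite it would possess a positive-dimensional fiber, which contains an irreducible curve $C$ contracted by $\phi$. For such $C$ one has $\phi_*C=0$, whence by the projection formula $mK_X\cdot C=\phi^*A\cdot C=A\cdot\phi_*C=0$, and therefore $K_X\cdot C=0$. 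This contradicts the strict nefness of $K_X$. Hence every fiber of $\phi$ is finite and $\phi$ is a finite morphism.

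Finally I invoke that the pullback of an ample line bundle under a finite morphism is ample. Concretely, for any irreducible subvariety $V\subseteq X$ of dimension $d$, the restriction $\phi|_V$ is finite onto its image $\phi(V)$, which again has dimension $d$, so
\[
(mK_X)^d\cdot V=(\phi^*A)^d\cdot V=(\deg\phi|_V)\, A^d\cdot\phi(V)>0,
\]
because $A$ is ample on $\phi(V)$. By the Nakai--Moishezon--Kleiman criterion $mK_X$ is ample, and consequently $K_X$ is ample. In summary, the entire content of the implication is the reduction to abundance: once $K_X$ is known to be semiample, strict nefness immediately upgrades it to ampleness, since a semiample line bundle fails to be ample exactly along the curves contracted by its associated fibration, and strict nefness forbids the existence of any such curve.
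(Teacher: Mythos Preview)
Your argument is correct and matches the paper's treatment exactly. The paper does not prove this statement; it only remarks that the conjecture ``is actually a consequence of the abundance conjecture,'' and your proposal spells out precisely this deduction: strict nefness gives nefness, abundance yields semiampleness, and then Lemma~\ref{strictlynefandsemiample} (strictly nef $+$ semiample $\Rightarrow$ ample), whose proof is essentially your second and third paragraphs, finishes the job.
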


\noindent  As analogous to the Fujita conjecture, Serrano proposed in \cite{Serrano1995} the following conjecture, which is a generalization of Conjecture \ref{CampanaPeternellConjecture1}.

\begin{conj}\label{Serranoconjecture1} Let $X$ be a projective manifold. If $L$ is a strictly nef line bundle, then $K_X\otimes L^{\otimes m}$ is ample for  $m\geq \dim X+2$.
\end{conj}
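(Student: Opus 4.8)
The plan is to verify ampleness through Kleiman's criterion, namely to establish that $(K_X+mL)\cdot z>0$ for every nonzero class $z\in\NEX$, using the Cone Theorem to control the $K_X$-negative directions. Set $n=\dim X$. By the Cone Theorem the $K_X$-negative part of $\NEX$ is generated by classes of rational curves $C_i$ with $0<-K_X\cdot C_i\leq n+1$, and any $z\in\NEX$ can be written as $z=z_{+}+\sum_i a_i[C_i]$ with $z_{+}\in\NEX_{K_X\geq 0}$ and $a_i\geq 0$.

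First I would treat the $K_X$-negative extremal rays. Since $L$ is strictly nef and $L\cdot C_i$ is an integer, one has $L\cdot C_i\geq 1$; hence for $m\geq n+2$,
\[
(K_X+mL)\cdot C_i=K_X\cdot C_i+m\,(L\cdot C_i)\geq -(n+1)+m\geq 1>0 ,
\]
which is precisely where the Fujita-type threshold $m\geq n+2$ enters. For $z_{+}\in\NEX_{K_X\geq 0}$ we have $(K_X+mL)\cdot z_{+}=K_X\cdot z_{+}+m\,(L\cdot z_{+})\geq 0$ because $L$ is in particular nef. Adding these contributions over the decomposition shows $(K_X+mL)\cdot z\geq 0$ for all $z$; thus $K_X+mL$ is already \emph{nef} when $m\geq n+2$, and only the strictness of the inequality remains in doubt.

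The final step is to upgrade nefness to ampleness. Tracing the equalities above, a nonzero $z$ with $(K_X+mL)\cdot z=0$ must have every $a_i=0$ and must satisfy $K_X\cdot z=0$ together with $L\cdot z=0$. Hence the conjecture is equivalent to the assertion that the face
\[
F\defeq\NEX\cap\{\,K_X\cdot z=0\,\}\cap\{\,L\cdot z=0\,\}
\]
reduces to the origin. Since $L$ is strictly nef, the extremal face $\{L\cdot z=0\}$ contains the class of no irreducible curve, so any nonzero element of $F$ is a limit of curve classes that is invisible to $L$ --- exactly the mechanism separating strict nefness from ampleness.

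The hard part is to exclude such a phantom face, and here the analogy with the classical Fujita conjecture breaks down: the face $\{L\cdot z=0\}$ supports no effective curve and therefore admits no Mori contraction, so the extremal-ray machinery cannot be applied to it directly. I would try to control $F$ either through the numerical dimension of $L$ along it, or by proving that $K_X$ is strictly positive on the $K_X$-trivial face cut out by $L$, a statement that would follow from a suitable form of the generalized abundance conjecture. Failing an unconditional argument, a dimension-by-dimension analysis of whether $\{L\cdot z=0\}$ is trivial, in the spirit of the known surface and threefold cases, appears unavoidable, and I expect this boundary analysis to be the genuine obstacle.
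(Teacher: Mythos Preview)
The statement you are attempting to prove is labeled in the paper as a \emph{conjecture} (Serrano's Conjecture~\ref{Serranoconjecture1}), not a theorem: the paper offers no proof, only remarks that it is solved for surfaces and partially for threefolds. So there is no argument in the paper to compare your proposal against.

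Your analysis is accurate as far as it goes, and in fact it correctly isolates why the statement is open. The Cone Theorem argument you give does establish that $K_X+mL$ is nef once $m\geq n+2$: the bound $-K_X\cdot C_i\leq n+1$ on extremal rational curves together with $L\cdot C_i\geq 1$ handles the $K_X$-negative rays, and the $K_X$-nonnegative part is automatic. You then correctly locate the residual difficulty in the face
\[
F=\NEX\cap\{K_X\cdot z=0\}\cap\{L\cdot z=0\},
\]
and you are right that strict nefness of $L$ only forbids \emph{curve classes} from lying on $\{L\cdot z=0\}$, not limits of such classes. This phantom face is exactly the obstruction, and your observation that it supports no contractible ray---so that Mori theory gives no direct grip on it---is the reason the problem resists the standard toolkit. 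Your suggestion that a form of generalized abundance would suffice is also in line with the paper's broader discussion (cf.\ the remarks after Conjecture~\ref{LOYconjecture1}).

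In short: there is no gap in your reasoning, but there is also no proof---you have reproduced the well-known reduction and correctly diagnosed that the remaining step is the content of the conjecture itself.
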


\noindent This conjecture has been solved for projective surfaces in \cite{Serrano1995}. For the progress on projective threefolds and higher dimensional manifolds, we refer to \cite[Theorem~0.4]{CampanaChenPeternell2008} and the references therein.\\

It is also known that the existence of "positive" subsheaves of the
tangent bundle can also characterize  the
ambient manifold. For instance,  Andreatta and Wi\'sniewski obtained in \cite[Theorem]{AndreattaWisniewski2001}
the following characterization of projective spaces:
\begin{thm}
    \label{thm:Andreatta-Wisniewski1}
    Let $X$ be a projective manifold. If the tangent bundle $T_X$ contains a locally free ample subsheaf $\sF$, then $X$ is isomorphic to a projective space.
\end{thm}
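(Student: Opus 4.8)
The plan is to run a Mori-theoretic argument built around minimal rational curves, reducing the statement to the characterization of projective space due to Cho, Miyaoka and Shepherd-Barron: a smooth projective $n$-fold carrying a dominating family of rational curves whose general member $\ell$ satisfies $-K_X\cdot\ell=n+1$ is isomorphic to $\bbP^n$. The whole difficulty is thus concentrated in producing a minimal rational curve of maximal anticanonical degree, and the ampleness of $\sF$ is what must drive this.

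First I would show that $X$ is uniruled. If it were not, Miyaoka's generic semipositivity theorem would force $\Omega^1_X|_C$ to be nef for a general complete-intersection curve $C$; then every subbundle of $T_X|_C$ would have non-positive degree, whereas the ample locally free sheaf $\sF$ restricts, for general such $C$, to a subbundle $\sF|_C\subset T_X|_C$ with $\deg\sF|_C>0$, a contradiction. Hence $X$ is uniruled, and I fix a minimal dominating family $\cK$ of rational curves. For a general point $x$ and the normalization $\morp{f}{\bbP^1}{X}$ of a general member $\ell$ through $x$, the curve is free and standard, so
\[
f^{*}T_X\cong\sO_{\bbP^1}(2)\oplus\sO_{\bbP^1}(1)^{\oplus p}\oplus\sO_{\bbP^1}^{\oplus(n-1-p)},\qquad p=-K_X\cdot\ell-2,
\]
where $p=\dim\cC_x$ and $\cC_x\subset\bbP(T_xX)$ is the variety of minimal rational tangents at $x$.

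Next I would exploit the positivity of $\sF$. After moving $\ell$ off the locus where $T_X/\sF$ fails to be locally free, the inclusion pulls back to a subbundle inclusion $f^{*}\sF\hookrightarrow f^{*}T_X$; since $\sF$ is ample, every summand of $f^{*}\sF$ has degree $\ge 1$, and a map $\sO_{\bbP^1}(b)\to\sO_{\bbP^1}(a)$ with $b\ge1\ge a+1$ vanishes, so the image of $f^{*}\sF$ lands in the positive part $\sO_{\bbP^1}(2)\oplus\sO_{\bbP^1}(1)^{\oplus p}$, of rank $p+1$. This yields the numerical bound $\mu:=-K_X\cdot\ell=p+2\ge r+1$.

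The crux, and the step I expect to be the main obstacle, is that this degree count is by itself far too weak: already for $r=1$ the statement is Wahl's theorem, yet the inequality only gives $\mu\ge2$. The real content is to \emph{propagate} the ampleness of $\sF$ until one forces $\cC_x=\bbP(T_xX)$, equivalently $\mu=n+1$. I would do this by analyzing the Mori contraction $\morp{\phi}{X}{Y}$ of the $K_X$-negative extremal ray $R\subset\NEX$ spanned by $\cK$: restricting $\sF$ to a positive-dimensional fiber $F$ and comparing $\sF|_F$ with the relative tangent sequence of $\phi$ produces either an ample subsheaf of $T_F$, permitting an induction on $\dim F$, or a nonzero ample-twisted map from a sub of $\sF$ into the normal data of $F$. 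Combining the fiber-length inequality $\dim F\ge\mu-1$ with $\mu\ge r+1$ and the inductive control of the horizontal part of $\sF$ should force $Y$ to be a point, so that $\rho(X)=1$ and $\mu=n+1$. The delicate point is precisely this bookkeeping: splitting $\sF$ along $\phi$ into a part tangent to the fibers and a part mapping to the normal bundle, and guaranteeing that ampleness survives each restriction so the induction closes. Once $\mu=n+1$ is obtained, the Cho--Miyaoka--Shepherd-Barron criterion gives $X\cong\bbP^n$.
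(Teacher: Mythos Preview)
The paper does not contain a proof of this statement. Theorem~\ref{thm:Andreatta-Wisniewski1} is quoted from the literature as the main result of Andreatta and Wi\'sniewski \cite[Theorem]{AndreattaWisniewski2001}; the present paper only uses it as background and motivation for the strictly nef analogue (Theorem~\ref{thm:main-theorem1}), and gives no argument for it. So there is no ``paper's own proof'' to compare your proposal against.

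That said, a brief comment on your sketch. The overall strategy---uniruledness via Miyaoka, analysis of the splitting type of $f^*T_X$ on a minimal free rational curve, and a Mori contraction argument---is indeed in the spirit of the original Andreatta--Wi\'sniewski proof. However, you yourself identify the real gap: the passage from $\mu\ge r+1$ to $\mu=n+1$ is the entire content of the theorem, and your paragraph on ``propagating ampleness'' through the Mori contraction $\varphi\colon X\to Y$ is not yet an argument. The inductive scheme you describe (splitting $\sF|_F$ into a tangential and a normal part along a fiber $F$) is roughly what Andreatta and Wi\'sniewski do, but making it work requires care: one must control the length of the extremal ray, show that $\varphi$ is equidimensional with projective-space fibers, and then argue that the horizontal component forces $Y$ to be a point. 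None of these steps is automatic, and your sketch does not supply them. Also, invoking Cho--Miyaoka--Shepherd-Barron at the end is fine in principle, but note that it postdates \cite{AndreattaWisniewski2001}; the original proof concludes differently.
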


\noindent When $\sF$ is a line bundle, this result is proved by
Wahl in \cite{Wahl1983}, and in \cite{CampanaPeternell1998}, Campana and
Peternell established the cases $r\geqslant n-2$. It is also shown that the assumption on the  local freeness can be
dropped (\cite{AproduKebekusPeternell2008,Liu2019}). On the other hand, according to Mumford's construction (see Example \ref{Mumfordexample}), Theorem \ref{thm:Andreatta-Wisniewski1}
does not hold  if the subsheaf $\sF$ is  assumed to be strictly nef. Indeed, we  obtained in \cite[Theorem~1.3]{LiuOuYang2020} the following  result, which is an extension of
Theorem \ref{thm:Andreatta-Wisniewski1}.

\begin{thm} \label{thm:main-theorem1}
	
	Let $X$ be a   projective manifold. Assume that the tangent bundle $T_X$ contains a locally free strictly nef subsheaf $\sF$ of rank $r$.
	Then $X$ admits a $\bbP^d$-bundle structure $\varphi\colon X\rightarrow T$ for some integer $d\geq r$.
	Furthermore, if $T$ is not a single point, then it is a hyperbolic projective manifold of general type.
\end{thm}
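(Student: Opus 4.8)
The plan is to run the theory of minimal rational curves and reduce, fibrewise, to the Andreatta--Wiśniewski theorem (Theorem \ref{thm:Andreatta-Wisniewski1}). First I would prove that $X$ is uniruled. Restricting $\sF$ to a general member $C$ of a covering family of complete intersection curves, strict nefness gives $\deg(\sF|_C)>0$ by Proposition \ref{cri0}, so $T_X|_C$ has a subsheaf of positive degree and $\Omega_X^1|_C$ is not nef. By Miyaoka's generic semipositivity, together with the characterization of non-uniruled manifolds via pseudoeffectivity of $K_X$, this forces $K_X$ to be non-pseudoeffective, so $X$ is uniruled.

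Next I would fix a minimal covering family of rational curves and analyse a general member $\ell\cong\bbP^1$. By Proposition \ref{cri0} the restriction $\sF|_\ell$ is strictly nef on $\bbP^1$, hence ample, so every summand in its Grothendieck splitting is positive. Writing $T_X|_\ell\cong\sO(2)\oplus\sO(1)^{\oplus p}\oplus\sO^{\oplus(n-1-p)}$ with $n=\dim X$, the composite $\sF|_\ell\hookrightarrow T_X|_\ell\twoheadrightarrow\sO^{\oplus(n-1-p)}$ vanishes because $\mathrm{Hom}(\sF|_\ell,\sO)=H^0((\sF|_\ell)^{\ast})=0$; thus $\sF|_\ell$ lands in the ample part $\sO(2)\oplus\sO(1)^{\oplus p}$ of rank $p+1$, and in particular $r\le p+1$.

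The core step is to globalise this into a bundle structure. Using the deformation theory of the minimal family, I would show that it is unsplit and that the induced distribution is a regular foliation with fibres $\bbP^d$, producing a smooth morphism $\varphi\colon X\to T$ onto a projective manifold $T$ with $d=p+1\ge r$. Restricting the relative tangent sequence $0\to T_{X/T}\to T_X\to\varphi^{\ast}T_T\to 0$ to a fibre $F\cong\bbP^d$, the map $\sF|_F\to\varphi^{\ast}T_T|_F$ from an ample sheaf to a trivial bundle vanishes, so $\sF\subseteq T_{X/T}$, which re-confirms $r\le d$ by a rank count. Finally, every rational curve of $X$ is $\varphi$-vertical, so $T$ carries no rational curves; I would exclude rational and elliptic curves in $T$ by observing that over such a curve $\sF$ would become ample (by the statement that strictly nef bundles on $\bbP^1$ and on elliptic curves are ample), yielding an ample subsheaf of $T_X$ over the corresponding surface and hence, via Theorem \ref{thm:Andreatta-Wisniewski1}, extra vertical rational curves — a contradiction. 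Hyperbolicity excludes all entire curves, and then general type follows from the Kobayashi--Ochiai theorem (measure hyperbolicity implies general type).

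I expect the main obstacle to be the construction of the genuine $\bbP^d$-bundle: upgrading the fibrewise ampleness of $\sF|_\ell$ to a smooth morphism with $\bbP^d$ fibres requires controlling the variety of minimal rational tangents and proving that the associated distribution is a regular, algebraically integrable foliation with rational leaves — precisely the point where the strict-nef (rather than ample) hypothesis forces a possibly nontrivial base $T$. A secondary difficulty is the hyperbolicity of $T$, which demands excluding transcendental entire curves and not merely the algebraic ones ruled out above by strict nefness.
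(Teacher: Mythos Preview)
This paper is a survey and does not give a proof of Theorem~\ref{thm:main-theorem1}; the result is quoted from \cite[Theorem~1.3]{LiuOuYang2020}, and the only additional information offered here is the dichotomy for $\sF$ recorded after Theorem~\ref{thm:main-theorem} (either $\sF\cong T_{X/T}$ with $X$ a flat projective bundle over $T$, or $\sF$ is numerically projectively flat with $\sF|_{\mathrm{fibre}}\cong\sO_{\bbP^d}(1)^{\oplus r}$). So there is no ``paper's own proof'' to benchmark against, and a genuine comparison would have to be made with \cite{LiuOuYang2020}.

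That said, several parts of your outline would not go through as written. First, your exclusion of rational and elliptic curves in $T$ is circular: the fact that strictly nef bundles are ample on $\bbP^1$ and on elliptic curves is a statement about bundles on \emph{curves}, whereas $\sF$ restricted to $\varphi^{-1}(C)$ lives on a surface (or higher), so you cannot conclude ampleness of $\sF$ there; and even if you could, Theorem~\ref{thm:Andreatta-Wisniewski1} characterises $\bbP^n$, not $\bbP^d$-bundles over curves, so the ``extra vertical rational curves'' contradiction does not arise. Second, Kobayashi hyperbolicity is much stronger than the absence of rational and elliptic curves: you must rule out all nonconstant holomorphic maps $\bbC\to T$, and nothing in your sketch touches the transcendental case --- you yourself flag this, and it is not a secondary difficulty but the main one for that clause. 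Third, ``hyperbolic $\Rightarrow$ general type'' is not a theorem: the Kobayashi--Ochiai result goes the other way (general type $\Rightarrow$ measure hyperbolic), and the implication you invoke is an open conjecture of Lang. In \cite{LiuOuYang2020} the general type and hyperbolicity assertions are obtained by separate arguments using the numerically projectively flat structure of $\sF$ (cf.\ the dichotomy above), not by deducing one from the other. Finally, your ``core step'' --- upgrading the fibrewise picture to a genuine $\bbP^d$-bundle --- is where almost all the work lies; waving at ``the induced distribution is a regular foliation'' hides the actual input (algebraic integrability, regularity of the leaves, absence of singular fibres), which in the ample case is already delicate and in the strictly nef case requires the new arguments of \cite{LiuOuYang2020}.
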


\noindent Recall that  a single point is also considered to be hyperbolic in the sense that every holomorphic map from $\bbC$ to it is  constant. We expect a stronger geometric positivity on the cotangent bundle of the base $Y$ in Theorem \ref{thm:main-theorem1} when $\dim Y>0$.  
As an application of Theorem \ref{thm:main-theorem1}, we obtain in \cite[Theorem~1.4]{LiuOuYang2020} a characterization  of projective spaces.

\begin{thm}
	\label{thm:simply-connected-Pn}  Let $X$ be an $n$-dimensional  complex projective manifold such that $T_X$ contains a locally free strictly nef subsheaf $\sF$.
	If $\pi_1(X)$ is virtually solvable, then $X$ is isomorphic to  $\bbP^n$, and $\sF$ is isomorphic to either $T_{\bbP^n}$ or $\sO_{\bbP^n}(1)^{\oplus r}$.
\end{thm}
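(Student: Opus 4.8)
The plan is to run the hypotheses through Theorem \ref{thm:main-theorem1} and then use virtual solvability of $\pi_1(X)$ to collapse the base of the resulting bundle. First I would apply Theorem \ref{thm:main-theorem1} to obtain a $\bbP^d$-bundle structure $\varphi\colon X\to T$ with $d\geq r$. Since the fibers $\bbP^d$ are simply connected, the homotopy exact sequence of the fibration yields $\pi_1(X)\cong\pi_1(T)$, so $\pi_1(T)$ is virtually solvable. If I can show that $T$ is a single point, then $X\cong\bbP^d=\bbP^n$ and only the identification of $\sF$ remains.

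The heart of the proof is to exclude $\dim T>0$. Assume $\dim T>0$; then $T$ is a hyperbolic projective manifold of general type with virtually solvable fundamental group. I would first note that $\sF$ is \emph{vertical}: restricting $\sF\hookrightarrow T_X$ to a fiber $F\cong\bbP^d$ and using the sequence $0\to T_F\to T_X|_F\to\sO_F^{\oplus\dim T}\to 0$, any nonzero composite $\sF|_F\to\sO_F^{\oplus\dim T}$ would produce a nonzero section of the anti-nef sheaf $(\sF|_F)^\vee$, which is impossible because $\sF|_F$ is strictly nef on $\bbP^d$; hence $\sF\subseteq T_{X/T}$. It is crucial to observe that hyperbolicity and general type \emph{alone} do not contradict virtual solvability of $\pi_1$: simply connected hyperbolic surfaces of general type (generic high-degree surfaces in $\bbP^3$) and hyperbolic surfaces of general type with abelian $\pi_1$ (suitable ample divisors in simple abelian threefolds) both exist. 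Thus the subsheaf $\sF$ itself, not merely the geometry of $T$, must enter the argument.

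This is where I expect the main difficulty. The guiding principle, already visible in Mumford's examples, is that a strictly nef but non-ample bundle must carry a flat unitary part whose non-triviality is detected by a non-virtually-solvable representation of the fundamental group of the base. Accordingly, I would try to show that when $\pi_1(T)$ is virtually solvable the vertical subsheaf $\sF\subseteq T_{X/T}$ is in fact \emph{ample}: after a finite \'etale base change realizing the solvable structure, the monodromy obstructing ampleness should become trivial. Granting this, Theorem \ref{thm:Andreatta-Wisniewski1}, in the form that allows non-locally-free ample subsheaves, forces $X\cong\bbP^n$ and hence $T$ a point, contradicting $\dim T>0$. Turning this flatness heuristic into a rigorous implication---that strict nefness together with virtual solvability of $\pi_1$ forces ampleness---is the step I regard as the genuine obstacle.

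Once $X\cong\bbP^n$, I would classify $\sF$ by restriction to lines. On a line $\ell$ one has $T_{\bbP^n}|_\ell\cong\sO_\ell(2)\oplus\sO_\ell(1)^{\oplus(n-1)}$, and $\sF|_\ell$, being strictly nef on $\bbP^1$, is ample, so $\sF|_\ell\cong\bigoplus_i\sO_\ell(a_i)$ with all $a_i\geq 1$. Since $\sF|_\ell$ embeds into $T_{\bbP^n}|_\ell$, at most one $a_i$ can equal $2$ and the others equal $1$, so the general splitting type is $(1,\dots,1)$ or $(2,1,\dots,1)$. In the first case $\sF\otimes\sO_{\bbP^n}(-1)$ is trivial on every line, hence globally trivial, giving $\sF\cong\sO_{\bbP^n}(1)^{\oplus r}$. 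In the second case $\sF$ contains the tangent direction of a general line through a general point; as such directions span the tangent space, $\sF$ has full rank $r=n$ with $\det\sF=\det T_{\bbP^n}$, so $\sF\hookrightarrow T_{\bbP^n}$ is an isomorphism and $\sF\cong T_{\bbP^n}$. These are exactly the two asserted possibilities.
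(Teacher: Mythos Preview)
Your outline matches the route the paper indicates: apply Theorem~\ref{thm:main-theorem1}, transport the fundamental-group hypothesis to $T$ via $\pi_1(X)\cong\pi_1(T)$, and then rule out $\dim T>0$; and you are right that hyperbolicity plus general type of $T$ alone cannot do this, so $\sF$ itself must be used. The paper is a survey and does not prove the theorem in the text, but immediately after restating Theorem~\ref{thm:main-theorem1} in Section~4 it records the structural input that fills exactly the gap you flag: by \cite[Theorem~8.1]{LiuOuYang2020} one is always in one of two cases, namely $\sF\cong T_{X/T}$ with $X\to T$ a \emph{flat} $\bbP^d$-bundle, or $\sF$ is numerically projectively flat with $\sF|_{\bbP^d}\cong\sO_{\bbP^d}(1)^{\oplus r}$. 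This dichotomy is the rigorous form of your ``flat unitary part'' heuristic. Once it is available, virtual solvability of $\pi_1(T)$ allows one, after a finite \'etale base change, to triangularise (respectively diagonalise) the relevant monodromy representation and thereby produce a numerically trivial line-bundle quotient of a restriction of $\sF$, contradicting strict nefness whenever $\dim T>0$. Note that the argument does \emph{not} first establish that $\sF$ is ample and then invoke Theorem~\ref{thm:Andreatta-Wisniewski1}; it exhibits a forbidden degree-zero quotient directly from the flat structure.

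Your classification of $\sF$ on $\bbP^n$ by splitting types on lines is correct and standard; in the paper's framework the same conclusion drops out of the dichotomy once $T$ is a point, case~(1) giving $T_{\bbP^n}$ and case~(2) giving $\sO_{\bbP^n}(1)^{\oplus r}$.
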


\noindent There are many other characterizations of projective spaces, and we refer to \cite{KobayashiOchiai1973,Peternell1990,CampanaPeternell1998,Druel2004,Hwang2006,Araujo2006,AraujoDruelKovacs2008, Yang2017,Tosatti2017,FengLiuWan2017,Li2018} and the references therein.\\

\noindent{\bf Acknowledgements.} % We would like to thank Professors Ya Deng, St\'ephane Druel, Baohua Fu, Andreas H\"oring, Thomas Peternell and Zhiyu Tian for inspiring discussions and useful communications. We would also like to thank Professor
%Shing-Tung Yau for his valuable help, support and guidance.
 The first-named author is supported by China Postdoctoral Science Foundation (2019M650873).

\vskip 2\baselineskip

\section{Basic properties and examples}\label{basic}

In this section, we investigate basic properties of strictly nef bundles and discuss some examples. As we mentioned before, Mumford
constructed a strictly nef vector bundle which is  not ample  (see \cite[Chapter I, Example 10.6]{Hartshorne1970}). We shall describe this example in details. Let $E$ be a rank $2$ vector bundle over a smooth curve $C$ of genus $g\geq 2$,   $X=\bbP(E)$ be the projectivized bundle  and $\pi:\bbP(E)\rightarrow C$ be the projection. Let
$\sO_E(1)$ be the tautological line bundle of $\bbP(E)$ and $D$ be
the corresponding divisor over $X$.

\begin{lemma}\cite[Chapter I, Proposition~ 10.8]{Hartshorne1970}\label{examplelemma} For any $m>0$, there is a one-to-one
correspondence between \begin{enumerate}
\item effective curves $Y$ on $X$, having no fibers as components,
of degree $m$ over $C$; and

\item sub-line bundles $L$ of $\mathrm{Sym}^{\otimes m} E$. \end{enumerate} Moreover, under this
correspondence, one has \begin{equation} D\cdot Y=
m\deg(E)-\deg(L)\label{examplekey}.\end{equation}\end{lemma}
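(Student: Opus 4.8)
The plan is to realise the correspondence explicitly through the adjunction between $\pi\colon X=\bbP(E)\to C$ and $\pi^*$, using the identification $\pi_*\sO_E(m)=\mathrm{Sym}^{\otimes m}E$ (Grothendieck's convention, in which $\bbP(E)$ parametrises rank-$1$ quotients) together with the Picard structure of the ruled surface $X$. First I would record the intersection numbers on $X$: writing $\xi=[D]$ and $F$ for a fibre class, the Grothendieck relation $\xi^2-\pi^*c_1(E)\cdot\xi+\pi^*c_2(E)=0$ collapses over the curve $C$ (where $\pi^*c_2(E)=0$) to $\xi^2=\deg E$, while $\xi\cdot F=1$ and $F^2=0$; moreover $\pic(X)=\bbZ\,\xi\oplus\pi^*\pic(C)$.

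For the forward direction, given a sub-line bundle $L\subset \mathrm{Sym}^{\otimes m}E=\pi_*\sO_E(m)$, adjunction gives
\[
\operatorname{Hom}(L,\pi_*\sO_E(m))=\operatorname{Hom}(\pi^*L,\sO_E(m))=H^0\bigl(X,\sO_E(m)\otimes\pi^*L^{-1}\bigr),
\]
so a nonzero inclusion produces a nonzero section $s$ whose zero divisor $Y=Z(s)$ has $\sO_X(Y)=\sO_E(m)\otimes\pi^*L^{-1}$. Restricting $s$ to a fibre $F_p=\pi^{-1}(p)\cong\bbP^1$ and identifying $\sO_E(m)|_{F_p}=\sO_{\bbP^1}(m)$, one sees that $s|_{F_p}$ is the image of the fibre map $L\otimes k(p)\to(\mathrm{Sym}^{\otimes m}E)\otimes k(p)=H^0(\bbP^1,\sO(m))$; hence $Y$ contains $F_p$ exactly when this map vanishes, i.e. precisely when $L$ fails to be saturated at $p$. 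Thus $L$ being a genuine (saturated, locally free quotient) sub-line bundle is equivalent to $Y$ carrying no fibre as a component, and since $\sO_E(m)$ restricts to $\sO_{\bbP^1}(m)$ the curve $Y$ has degree $m$ over $C$.

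Conversely, for an effective $Y$ of degree $m$ over $C$ with no fibre component, the relation $[Y]\cdot F=m$ together with the splitting of $\pic(X)$ forces $\sO_X(Y)=\sO_E(m)\otimes\pi^*M$ for a unique $M\in\pic(C)$. The projection formula identifies the defining section of $Y$ with an element of $H^0(C,\mathrm{Sym}^{\otimes m}E\otimes M)$, equivalently a nonzero map $M^{-1}\to\mathrm{Sym}^{\otimes m}E$; putting $L=M^{-1}$ yields the sub-line bundle, which is saturated by the hypothesis on fibre components via the fibrewise analysis above. Naturality of the adjunction and projection-formula isomorphisms shows the two assignments are mutually inverse, giving the asserted bijection.

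Finally, the degree formula is an immediate intersection computation: from $\sO_X(Y)=\sO_E(m)\otimes\pi^*L^{-1}$ one reads off $[Y]=m\,\xi-\deg(L)\,F$, whence
\[
D\cdot Y=\xi\cdot\bigl(m\,\xi-\deg(L)\,F\bigr)=m\,\xi^2-\deg(L)\,(\xi\cdot F)=m\deg(E)-\deg(L).
\]
The step I expect to be the main obstacle is the fibrewise analysis: making rigorous that vanishing of $s$ along a fibre corresponds exactly to the failure of $L$ to be saturated there, so that the bijection matches curves \emph{without} vertical components with \emph{honest} sub-bundles rather than arbitrary injections of line bundles. The only other point requiring care is fixing the intersection-theoretic conventions, above all $\xi^2=\deg E$, so as to reproduce the precise sign in the stated identity; once these are pinned down the computation is forced.
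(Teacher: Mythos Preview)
The paper does not give its own proof of this lemma; it is stated with a citation to \cite[Chapter~I, Proposition~10.8]{Hartshorne1970} and used as a black box. Your argument is exactly the standard one behind Hartshorne's result: identify $\pi_*\sO_E(m)$ with $\mathrm{Sym}^{\otimes m}E$, use the $\pi^*\dashv\pi_*$ adjunction to pass between inclusions $L\hookrightarrow\mathrm{Sym}^{\otimes m}E$ and sections of $\sO_E(m)\otimes\pi^*L^{-1}$, and read off the intersection number from the Grothendieck relation $\xi^2=\deg E$ on the ruled surface. So there is nothing to compare against, and your approach is correct and canonical.

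One small remark on the point you yourself flag. The fibrewise analysis is indeed the only place requiring care, and your diagnosis is right: the section $s$ vanishes identically on $F_p$ precisely when the composite $L\otimes k(p)\to(\mathrm{Sym}^{\otimes m}E)\otimes k(p)$ is zero, which on a smooth curve is exactly failure of saturation at $p$. Thus ``no fibre components'' matches ``$L$ saturated''. The lemma as quoted in the paper says only ``sub-line bundles'', which is slightly loose; your reading (saturated sub-line bundles, equivalently subbundles with locally free cokernel) is the one that makes the bijection with fibre-free curves honest. If one instead allows arbitrary rank-one locally free subsheaves, the same adjunction gives a bijection with \emph{all} effective degree-$m$ curves, fibre components included, and the degree formula $D\cdot Y=m\deg(E)-\deg(L)$ holds in either version. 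Your computation of that formula via $[Y]=m\xi-\deg(L)\,F$ and $\xi^2=\deg E$, $\xi\cdot F=1$ is correct.
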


\noindent For any effective curve $Y$ on
$X$, we denote by $m(Y)$ the degree of $Y$ over $C$. Then there
is an exact sequence
$$  0\rightarrow Pic(C)\stackrel{\pi}{\rightarrow}Pic(X)\stackrel{m}{\rightarrow}\bbZ\rightarrow0.$$
It follows that the the divisors on $X$, modulo numerical
equivalence, form a free abelian group of rank $2$, generated by $D$
and $F$ where $F$ is any fiber of $\bbP(E)$.
 %It is easy to see that
%$$ D^2=\deg(E),\ \ D\cdot F=1,  F^2=0.$$

\begin{lemma}\cite[Chapter I, Theorem~ 10.5]{Hartshorne1970}  Let $C$ be a smooth curve of genus $g\geq  2$. \begin{enumerate}
\item    If $E$ is a stable vector bundle,  then every symmetric power $\mathrm{Sym}^{\otimes k}E$ is   semi-stable.

    \item  For any $r>0$ and $d\in \bbZ$, there exists a stable vector bundle with rank $r$ and degree $d$ such that all symmetric powers $\mathrm{Sym}^{\otimes k} E$ are stable.
\end{enumerate}
\end{lemma}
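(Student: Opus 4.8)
The plan is to translate everything into representation theory via the Narasimhan--Seshadri correspondence, under which (poly)stable bundles on $C$ correspond to unitary representations and (semi)stability of symmetric powers becomes (ir)reducibility of symmetric powers of representations. Since $g\geq 2$, every stable bundle $E$ of rank $r$ and degree $d$ arises as the bundle $E_\rho$ associated to an \emph{irreducible} unitary representation $\rho$ of the fundamental group of the punctured surface $C\setminus\{p\}$ whose boundary loop $c=\prod_{i=1}^{g}[a_i,b_i]$ is sent to a central scalar $\zeta\,\mathrm{Id}$ with $\zeta=e^{2\pi\sqrt{-1}\,d/r}$ (so that $\zeta^{r}=1$); polystable bundles of the same slope correspond exactly to the \emph{completely reducible} such representations. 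Throughout I use that $\rho\mapsto E_\rho$ is compatible with tensor operations, so that $\mathrm{Sym}^{\otimes k}E_\rho$ is the bundle associated to $\mathrm{Sym}^{\otimes k}\rho$.

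For part (1), given $E$ stable I would write $E=E_\rho$ with $\rho$ irreducible unitary as above. Then $\mathrm{Sym}^{\otimes k}\rho$ again takes values in a unitary group, and unitary representations are completely reducible, so $\mathrm{Sym}^{\otimes k}\rho$ splits as a direct sum of irreducibles all of central character $\zeta^{k}$. Hence $\mathrm{Sym}^{\otimes k}E=E_{\mathrm{Sym}^{\otimes k}\rho}$ is polystable of slope $k\mu(E)$, and in particular semistable; no input beyond unitarity is used. Alternatively, part (1) follows from the characteristic-zero theorem that a tensor product of semistable bundles on $C$ is semistable, since $\mathrm{Sym}^{\otimes k}E$ is a direct summand of $E^{\otimes k}$ via the symmetrization idempotent $\tfrac{1}{k!}\sum_{\sigma\in S_k}\sigma$, and a direct summand of a semistable bundle of the same slope is itself semistable.

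For part (2), the key observation is that $\mathrm{Sym}^{\otimes k}E_\rho$ is \emph{stable} precisely when $\mathrm{Sym}^{\otimes k}\rho$ is \emph{irreducible}. Now $\mathrm{Sym}^{\otimes k}$ of the standard representation of $GL_r(\bbC)$ is an irreducible module (it is already irreducible under $SU(r)$), so whenever the Zariski closure of $\mathrm{im}\,\rho$ contains $SU(r)$ one gets that $\mathrm{Sym}^{\otimes k}\rho$ is irreducible for \emph{every} $k\geq 1$ at once. It therefore suffices to produce, for the prescribed $r$ and $d$, an irreducible unitary representation $\rho$ with boundary scalar $\zeta\,\mathrm{Id}$ whose image is Zariski dense in a subgroup containing $SU(r)$; the associated bundle $E=E_\rho$ is then stable with all symmetric powers stable simultaneously.

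The main obstacle is precisely this construction: solving $\prod_{i=1}^{g}[A_i,B_i]=\zeta\,\mathrm{Id}$ in $U(r)$ with the $A_i,B_i$ generating a Zariski-dense subgroup. This is where $g\geq 2$ enters decisively, since it furnishes at least four matrix parameters to play against the single relation. I would argue on the representation variety $\{(A_i,B_i)\in U(r)^{2g}:\prod_{i}[A_i,B_i]=\zeta\,\mathrm{Id}\}$ that the locus of tuples failing Zariski density --- equivalently, those preserving a proper subspace or reducing the monodromy to a subgroup not containing $SU(r)$ --- is a proper closed subset, so that a very general tuple works; the relation is consistent with the constraint because $\det[A_i,B_i]=1$ while $\det(\zeta\,\mathrm{Id})=\zeta^{r}=1$. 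For $g=1$ the fundamental group is abelian and admits no such dense representation, which matches the genus restriction in the statement and confirms that $g\geq 2$ is essential here.
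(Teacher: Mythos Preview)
The paper does not supply its own proof of this lemma; it is quoted verbatim from \cite[Chapter~I, Theorem~10.5]{Hartshorne1970} as a black box and used only as input to the subsequent theorem. So there is nothing in the paper to compare against.

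On its own merits, your Narasimhan--Seshadri approach is the classical one and is correct in outline. Part~(1) is complete as written: either the unitarity argument or the tensor-product-of-semistables argument (with $\mathrm{Sym}^{\otimes k}E$ split off from $E^{\otimes k}$ by the symmetrization idempotent) is a valid proof. For part~(2), the strategy---produce an irreducible unitary $\rho$ with the prescribed central character whose image has Zariski closure containing $SU(r)$, and then invoke the irreducibility of $\mathrm{Sym}^{\otimes k}$ of the standard $SU(r)$-representation to conclude that every $\mathrm{Sym}^{\otimes k}\rho$ is irreducible at once---is exactly right and is essentially what Hartshorne does.

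The only soft spot is the genericity claim on the representation variety, which you acknowledge (``I would argue\ldots'') without carrying out. It does go through: for $g\geq 2$ the variety $\{(A_i,B_i)\in U(r)^{2g}:\prod_i[A_i,B_i]=\zeta\,\mathrm{Id}\}$ is irreducible of positive dimension, and for each proper closed reductive subgroup $H\subset U(r)$ the locus of tuples landing in a conjugate of $H$ is a proper closed subvariety; since there are only countably many conjugacy classes of such $H$ (indexed by sub-root-data), a very general tuple avoids all of them and hence has Zariski-dense image. Alternatively one can give an explicit construction of $2g$ matrices satisfying the relator whose entries generate a dense subfield, but the counting argument is cleaner. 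Either way the step is standard, though in a self-contained write-up it deserves a sentence or a reference rather than a promissory ``I would argue''.
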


\begin{thm}\cite[Chapter I, Example~ 10.6]{Hartshorne1970} Let $E$ be a rank $2$ vector bundle over a smooth curve of genus $g\geq 2$. If $\deg(E)=0$ and  all symmetric powers $ \mathrm{Sym}^{\otimes k} E$ are stable, then $E$ is a strictly nef vector bundle, i.e. $\sO_E(1)$ is a strictly nef line bundle.  Moreover $E$ is not ample.
    \end{thm}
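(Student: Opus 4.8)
The plan is to use the characterization of curves on $X = \bbP(E)$ provided by Lemma \ref{examplelemma} together with the stability hypotheses to control the intersection number $D \cdot Y$ for every irreducible curve $Y$. First I would observe that since the divisors on $X$ modulo numerical equivalence are generated by $D$ and a fiber $F$, and since $D \cdot F = 1 > 0$, it suffices to verify strict nefness on irreducible curves that are \emph{not} fibers, i.e. curves $Y$ with $m(Y) = m > 0$. For such a curve, Lemma \ref{examplelemma} gives a sub-line bundle $L \hookrightarrow \mathrm{Sym}^{\otimes m} E$ together with the key formula $D \cdot Y = m \deg(E) - \deg(L)$.

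The next step is to exploit the hypotheses $\deg(E) = 0$ and stability of all symmetric powers. Since $\deg(E) = 0$, the formula reduces to $D \cdot Y = -\deg(L)$, so I must show $\deg(L) < 0$ for any sub-line bundle $L$ of $\mathrm{Sym}^{\otimes m} E$. Because $\mathrm{Sym}^{\otimes m} E$ is \emph{stable} by hypothesis, every proper sub-line bundle $L$ satisfies the strict slope inequality
\[
\deg(L) < \mu\bigl(\mathrm{Sym}^{\otimes m} E\bigr) = \frac{\deg(\mathrm{Sym}^{\otimes m} E)}{\rk(\mathrm{Sym}^{\otimes m} E)}.
\]
The slope of $\mathrm{Sym}^{\otimes m} E$ is computed from $\deg(E) = 0$: since forming symmetric powers scales the degree by a fixed combinatorial factor while $\deg(E) = 0$, one gets $\deg(\mathrm{Sym}^{\otimes m} E) = 0$, hence $\mu = 0$ and therefore $\deg(L) < 0$. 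This yields $D \cdot Y = -\deg(L) > 0$, proving $\sO_E(1)$ is strictly nef.

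For the final assertion that $E$ is not ample, I would argue by contradiction or by a direct intersection computation: if $\sO_E(1)$ were ample, then by the Nakai--Moishezon criterion one would need $D^{\dim Y} \cdot Y > 0$ for all subvarieties, and in particular $D^2 \cdot X > 0$ (equivalently $D^2 > 0$ on the surface $X$). But the self-intersection $D^2$ on the ruled surface $\bbP(E)$ equals $\deg(E) = 0$, so the top self-intersection vanishes and $\sO_E(1)$ fails the Nakai--Moishezon positivity test. Hence $E$ is strictly nef but not ample.

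The main obstacle I anticipate is the clean bookkeeping in the slope computation for $\mathrm{Sym}^{\otimes m} E$: one must confirm that $\deg(E) = 0$ forces $\deg(\mathrm{Sym}^{\otimes m} E) = 0$ (so that the slope is exactly zero and the \emph{strict} stability inequality gives the strict negativity $\deg(L) < 0$ we need), and one must carefully handle the boundary/edge possibility that $L$ could equal the whole bundle — but since $\mathrm{Sym}^{\otimes m} E$ has rank $m+1 \geq 2$ for $m \geq 1$, a sub-\emph{line} bundle is always proper, so stability applies with a strict inequality. The rest is a routine application of Lemma \ref{examplelemma} and the numerical structure of $\pic(X)$.
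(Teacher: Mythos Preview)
Your proposal is correct and follows essentially the same route as the paper: check fibers directly, then for non-fiber curves use Lemma \ref{examplelemma} together with stability of $\mathrm{Sym}^{\otimes m}E$ and $\deg E=0$ to force $\deg L<0$, hence $D\cdot Y>0$. The only cosmetic difference is the non-ampleness step: the paper simply notes that $\deg E=0$ rules out ampleness (an ample bundle on a curve has positive degree), whereas you phrase it via $D^2=\deg E=0$ and Nakai--Moishezon; both are immediate and equivalent.
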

\begin{proof}
Let $Y$ be an arbitrary irreducible curve on $X$. If $Y$
is a fiber, then $D\cdot Y=1$. If $Y$ is an irreducible curve of
degree $m>0$ over $C$, then by Lemma \ref{examplelemma}, $Y$ is
corresponding to a subline bundle $L$ of $\mathrm{Sym}^{\otimes m}E$. Note that since
$\mathrm{Sym}^{\otimes m} E$ are stable and of degree zero for all $m\geq 1$, we have
$$\deg(L)<\frac{\deg\left(\mathrm{Sym}^{\otimes m}E\right)}{\mathrm{rank}\left(\mathrm{Sym}^{\otimes m}E\right)}=0.$$
 Therefore, by
formula (\ref{examplekey}) $$ D\cdot
Y=m\deg(E)-\deg(L)=-\deg(L)>0.$$ Hence, the line bundle $\sO_E(1)$
of the divisor $D$ is strictly nef, i.e. $E$ is a  strictly nef vector bundle. Since $\deg (E)=0$, $E$ can not be ample.
\end{proof}

\begin{cor} Let $E$ be a  vector bundle over a smooth curve $C$. If $E$ is stable and $\deg(E)=0$, then $E$ admits a Hermitian-flat metric.
    \end{cor}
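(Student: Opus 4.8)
The plan is to recognize this statement as a direct consequence of the celebrated theorem of Narasimhan and Seshadri. First recall what is being asserted: a Hermitian metric $h$ on $E$ is \emph{Hermitian-flat} if its Chern connection $\nabla_h$ has vanishing curvature $\Theta_h=0$. Equivalently, $(E,h)$ is locally isometrically trivial as a Hermitian holomorphic bundle, and equivalently again $E$ arises from a unitary representation $\rho\colon\pi_1(C)\to U(r)$ of the fundamental group, where $r=\mathrm{rank}(E)$. So the goal is precisely to realize $E$ as a flat unitary bundle.

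The main input I would invoke is the Narasimhan--Seshadri theorem: over a compact Riemann surface, a holomorphic vector bundle of degree $0$ is stable if and only if it arises from an \emph{irreducible} unitary representation $\rho\colon \pi_1(C)\to U(r)$. Since $E$ is assumed stable of degree zero, such a $\rho$ exists. (For $C$ of genus $0$ or $1$ the hypotheses force $r=1$, and a degree-zero line bundle is flat by the Appell--Humbert description of $\pic^0(C)$; the substantive case is genus $g\geq 2$.)

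The second step is to manufacture the flat metric from $\rho$, which is routine. Let $\tilde C\to C$ be the universal cover and equip the trivial bundle $\tilde C\times\bbC^r$ with the standard flat connection and the standard Hermitian inner product on $\bbC^r$. Because $\rho$ takes values in $U(r)$, the $\pi_1(C)$-action preserves both structures, so they descend to the associated bundle $\tilde C\times_\rho\bbC^r$, which by Narasimhan--Seshadri is holomorphically isomorphic to $E$; it thereby carries a flat connection $\nabla$ together with a $\nabla$-parallel Hermitian metric $h$. The locally constant transition functions coming from $\rho$ are in particular holomorphic, so $\nabla^{0,1}=\bar\partial$ is the holomorphic structure of $E$; being simultaneously metric and compatible with $\bar\partial$, the connection $\nabla$ must by uniqueness equal the Chern connection $\nabla_h$. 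Hence $\Theta_h$ is the curvature of a flat connection, i.e. $\Theta_h=0$, and $h$ is Hermitian-flat.

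Alternatively, one could argue through the Kobayashi--Hitchin correspondence: stability yields a Hermitian--Einstein metric $h$ whose curvature satisfies $\sqrt{-1}\,\Lambda\Theta_h=\lambda\,\mathrm{Id}$ with $\lambda$ proportional to $\deg(E)=0$. On a curve any $\mathrm{End}(E)$-valued $(1,1)$-form is a multiple of the Kähler form, $\Theta_h=(\Lambda\Theta_h)\,\omega$, so $\Lambda\Theta_h=0$ already forces $\Theta_h=0$. The entire difficulty is concentrated in the input theorem---Narasimhan--Seshadri, or equivalently the degree-zero Kobayashi--Hitchin correspondence on a curve---which is deep; once it is granted, both the descent of the flat unitary structure and the one-line curvature computation are elementary.
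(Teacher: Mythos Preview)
Your proposal is correct. The paper's proof is precisely your ``alternative'' argument: stability gives a Hermitian--Einstein metric (the paper cites \cite{UhlenbeckYau1986}), and since $\deg E=0$ the Einstein constant vanishes, which on a curve forces the full curvature to vanish. Your primary route via Narasimhan--Seshadri is an equally valid and historically prior path---it avoids the analytic machinery of Uhlenbeck--Yau in favor of the earlier theorem specific to curves, at the cost of the extra verification that the descended connection is the Chern connection; both approaches ultimately encode the same correspondence.
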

\begin{proof}
Since $E$ is stable over $C$, there exists a
Hermitian-Einstein metric $h$ on $E$ (e.g. \cite{UhlenbeckYau1986}), i.e. $
g^{-1}\cdot R_{\alpha\bar \beta}= c\cdot h_{\alpha\bar \beta}$
for some constant $c$ where $g$ is a smooth metric on  $C$. Since
 $\deg(E)=0$, we deduce $c=0$, i.e. $(E,)$ is Hermitian-flat.
\end{proof}

\noindent We summarize Mumford's example as following.
\begin{example}\label{Mumfordexample} Let $C$ be a smooth curve of genus $g\geq 2$. There exists a rank $2$ vector bundle $E\rightarrow C$ satisfying the following properties:
    \begin{enumerate}
        \item $\deg(E)=0$;
        \item $\mathrm{Sym}^{\otimes k} E$ are stable for all $k\geq 1$;
        \item $E$ is strictly nef but not ample; $E^*$ is strictly nef but not ample;
        \item $E$ admits a Hermitian-flat metric.
\item
Let $X=\bbP(E)$, $\pi:X\rightarrow C$ be the projection and $\sO_{\bbP(E)}(1)$ be the tautological line bundle. Then $T_{X/C}=K^{-1}_{X/C}\cong \sO_{\bbP(E)}(2)\otimes \pi^*\det E^*$ is strictly nef.
    \end{enumerate}
\end{example}

 Although the strict nefness is not closed under tensor products and wedge products,  we still have the following   properties by using the Barton-Kleiman type criterion (Proposition \ref{cri0}).

\begin{prop}\label{basicproperties} Let $E$ and $F$ be two
vector bundles on a projective manifold $X$.
\begin{enumerate}
\item $E$ is a  strictly nef vector bundle if and only if for every smooth  curve
$C$ and for any non-constant morphism $f: C\rightarrow X$,    $f^*E$ is strictly nef.

\item If $E$ is  strictly nef, then any non-zero quotient bundle $Q$ of $E$ is  strictly nef.

\item $E\oplus F$  is  strictly nef if and only if both $E$ and $F$ are
strictly nef.

\item If the symmetric power $\mathrm{Sym}^{\otimes k} E$ is  strictly nef for some $k\geq 1$, then $E$ is  strictly nef.

\item Let  $f:Y\rightarrow X$ be  a  finite morphism such that  $Y$ is a  smooth projective variety. If $E$ is strictly nef, then so  is $f^*E$.

\item Let  $f:Y\rightarrow X$ be  a  surjective morphism such that  $Y$ is a  smooth projective variety. If $f^*E$ is strictly nef, then  $E$ is  strictly nef.

%\item If $E$ is strictly nef, then $h^0(X,E^*\otimes L)=0$  for any numerically trivial line bundle $L$.
\end{enumerate}\end{prop}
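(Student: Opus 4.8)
The plan is to deduce all six assertions from the Barton--Kleiman-type criterion of Proposition \ref{cri0}, so that in each case it suffices to control the degree of a line bundle quotient $\nu^*E\to L$ pulled back along a finite morphism $\nu\colon C\to X$ from a smooth projective curve. Two elementary facts will be used repeatedly: first, a composition of finite morphisms is finite, and a non-constant morphism from a smooth projective curve is automatically finite onto its image; second, pullbacks and symmetric powers of surjections of sheaves are again surjections.

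For item (5), given a finite $f\colon Y\to X$ and a finite $\nu\colon C\to Y$, I would note that $f\circ\nu\colon C\to X$ is finite and $\nu^*(f^*E)=(f\circ\nu)^*E$; any line quotient of the former is a line quotient of the latter, so $\deg L>0$ follows from strict nefness of $E$. Item (1) is the same bookkeeping: for the forward direction, if $f\colon C\to X$ is non-constant then it is finite onto its image, and precomposing any test morphism $\mu\colon C'\to C$ with $f$ reduces strict nefness of $f^*E$ to that of $E$; the reverse direction is immediate by taking $f=\nu$ and applying the criterion on $C$ to the strictly nef bundle $\nu^*E$. For item (2), a surjection $E\to Q$ pulls back to $\nu^*E\to\nu^*Q$, and composing with a line quotient $\nu^*Q\to L$ exhibits $L$ as a line quotient of $\nu^*E$, whence $\deg L>0$.

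The remaining direct items use one extra observation about curves: a nonzero morphism $\varphi\colon V\to L$ from a vector bundle to a line bundle on a smooth projective curve has image a sub-line-bundle $L'\subseteq L$, so that $V\to L'$ is a line bundle quotient and $\deg L\geq\deg L'$. For item (3), the forward direction follows from (2) since $E$ and $F$ are quotients of $E\oplus F$; conversely, given a line quotient $q\colon\nu^*E\oplus\nu^*F\to L$, at least one restriction $q|_{\nu^*E}$ or $q|_{\nu^*F}$ is nonzero, and the observation above produces a line quotient of $\nu^*E$ or of $\nu^*F$ of degree $\leq\deg L$, forcing $\deg L>0$ by strict nefness of $E$ and $F$. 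For item (4), a line quotient $\nu^*E\to L$ induces a surjection $\nu^*(\mathrm{Sym}^{\otimes k}E)=\mathrm{Sym}^{\otimes k}(\nu^*E)\to L^{\otimes k}$, and strict nefness of $\mathrm{Sym}^{\otimes k}E$ gives $k\deg L=\deg L^{\otimes k}>0$, hence $\deg L>0$.

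The genuinely geometric step is item (6), and this is where I expect the main difficulty. Given a finite $\nu\colon C\to X$ and a line quotient $\nu^*E\to L$, I must manufacture a smooth projective curve $C'$ together with finite morphisms $\pi\colon C'\to C$ and $g\colon C'\to Y$ satisfying $\nu\circ\pi=f\circ g$. To do this I would form the fibre product $W=C\times_X Y$, whose projection to $C$ is surjective because $f$ is; then choose an irreducible component of $W$ dominating $C$, cut it down by general hyperplane sections to an irreducible curve that still dominates $C$, and normalize to obtain $C'$ with its two projections $\pi$ and $g$. Since $C'$ dominates $C$, the map $\pi$ is finite of positive degree; since $f\circ g=\nu\circ\pi$ is non-constant, $g$ cannot be constant and is therefore finite. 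Then $g^*(f^*E)=(f\circ g)^*E=(\nu\circ\pi)^*E=\pi^*(\nu^*E)$ surjects onto $\pi^*L$, so strict nefness of $f^*E$ gives $0<\deg\pi^*L=(\deg\pi)(\deg L)$, and hence $\deg L>0$. The main obstacle is precisely the existence of this lifting curve with both maps finite: one must ensure the chosen multisection of $W\to C$ genuinely dominates $C$, so that $\pi$ is finite of positive degree, while not being contracted by $g$, which is exactly what the Bertini-type cutting and the non-constancy of $f\circ g$ guarantee.
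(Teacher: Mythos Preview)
Your proposal is correct and matches the paper's approach: the paper does not give a detailed proof of this proposition but simply states that the properties follow from the Barton--Kleiman type criterion (Proposition~\ref{cri0}), which is exactly the route you take. Your write-up supplies the details the paper omits, including the lifting-curve construction for item~(6), and these details are sound.
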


\begin{example} Let $E$ be the strictly nef vector bundle  in Example \ref{Mumfordexample}.
    \begin{enumerate}
        \item  $\Lambda^2E=\det E$ is numerically trivial and it is not strictly nef;
        \item the tensor product $E\otimes E^*$ is not strictly nef since $H^0(C,E\otimes E^*)\cong \bbC$;

        \item  $ E\oplus E^*$ is strictly nef, but  $\mathrm{Sym}^{\otimes 2}(E\oplus E^*)$ is not strictly nef. % Indeed, we have
      %  $$\mathrm{Sym}^{\otimes 2}(E\oplus E^*)=(E\otimes E^*)\oplus \mathrm{Sym}^{\otimes 2}E\oplus \mathrm{Sym}^{\otimes 2}E^*.$$
        %Since the direct summand $E\otimes E^*$ is not strictly nef, $\mathrm{Sym}^{\otimes 2}(E\oplus E^*)$ is not strictly nef, thanks to Proposition \ref{basicproperties} .
    \end{enumerate}

\end{example}

\noindent The following result is well-known and it will be used frequently in the sequel.
\begin{lemma}\label{strictlynefandsemiample} If $L$ is a strictly nef and semi-ample line bundle, then it is ample.
\end{lemma}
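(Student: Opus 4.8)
The plan is to prove that a strictly nef and semi-ample line bundle $L$ is ample by using the semi-ample fibration and the Nakai-Moishezon-Kleiman criterion. Since $L$ is semi-ample, a suitable multiple $L^{\otimes m}$ is globally generated and hence defines a morphism $\varphi\colon X\rightarrow \bbP^N$ whose image $Z=\varphi(X)$ satisfies $L^{\otimes m}\cong \varphi^*\sO_{\bbP^N}(1)$ (after taking the Stein factorization, we may assume $\varphi$ has connected fibers and $Z$ is normal). The key observation is that $\sO_Z(1)$ is ample on $Z$, so all the positivity of $L$ is controlled by the fibers of $\varphi$.

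First I would argue that $\varphi$ must be finite. Suppose not; then $\varphi$ contracts some curve $C\subset X$ to a point. On such a contracted curve one has $L^{\otimes m}\cdot C = \varphi^*\sO_{\bbP^N}(1)\cdot C = 0$, hence $L\cdot C=0$, contradicting the strict nefness of $L$, which requires $L\cdot C>0$ for every irreducible curve. Therefore no curve is contracted, and since the fibers of a projective morphism are positive-dimensional wherever they are not finite, every fiber of $\varphi$ is zero-dimensional, so $\varphi$ is a finite morphism onto $Z$.

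Once $\varphi$ is finite, the conclusion follows quickly: the pullback of an ample line bundle under a finite morphism is ample. Since $\sO_Z(1)$ is ample on $Z$ and $\varphi\colon X\rightarrow Z$ is finite, $L^{\otimes m}=\varphi^*\sO_Z(1)$ is ample, and therefore $L$ itself is ample. Alternatively, one can verify the Nakai-Moishezon-Kleiman criterion directly: for any positive-dimensional subvariety $Y\subset X$, finiteness of $\varphi$ gives $\dim\varphi(Y)=\dim Y$, and the projection formula together with ampleness of $\sO_Z(1)$ yields $L^{\dim Y}\cdot Y = \tfrac{1}{m^{\dim Y}}\,\sO_Z(1)^{\dim Y}\cdot\varphi_*Y>0$.

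I do not anticipate a serious obstacle here, as this is a standard result; the only point requiring care is the reduction step ensuring that the semi-ample system produces a genuine morphism with the stated properties (taking $m$ large enough for base-point-freeness and, if desired, passing to the Stein factorization so that $Z$ is normal with $L^{\otimes m}$ the pullback of an ample bundle). The heart of the matter is the finiteness of $\varphi$, which is exactly where strict nefness—as opposed to mere nefness—is used: a nef and semi-ample bundle can contract curves, whereas strict nefness forbids precisely this, forcing the fibration to be finite and hence $L$ to be ample.
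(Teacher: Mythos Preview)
Your proof is correct and follows essentially the same approach as the paper: pass to a multiple $mL$ that is globally generated, consider the associated morphism $\phi_m$, observe that strict nefness forces $\phi_m$ to be finite (since any contracted curve would satisfy $L\cdot C=0$), and conclude ampleness via pullback of an ample bundle under a finite map. The paper's proof is just a terser version of what you wrote, leaving the finiteness argument as ``easy to see.''
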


\begin{proof} Since $L$ is semi-ample, $mL$ is globally generated for some large  $m$. Let  $\phi_m : X \rightarrow Y$ be
    the morphism defined by $|mL|$. Since $L$ is strictly nef,  it is easy to see that $\phi_m$ is finite  and so $L$ is ample.
\end{proof}

\begin{cor} Let $L$ be a strictly nef line bundle. Then $H^0(X,L^*)=0$. Moreover, for any line bundle $F$, there exists a postive integer $m_0=m_0(L,F)$ such that for $m\geqslant  m_0$
    \begin{equation}H^0(X,(L^*)^{\otimes m}\otimes F)=0.\label{formalvanishing}\end{equation}
\end{cor}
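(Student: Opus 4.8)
The plan is to reduce the entire statement to a single numerical intersection computation against a fixed curve class. Fix a very ample line bundle $H$ on $X$ and set $n=\dim X$. The first step is to record the only real input, namely that the intersection number $\ell \defeq L\cdot H^{n-1}$ is \emph{strictly} positive. Indeed, for $n\geq 2$ a complete intersection $C=H_1\cap\cdots\cap H_{n-1}$ of general members of $|H|$ is a nonempty scheme of pure dimension $1$, hence defines an effective one-cycle $\sum_j n_j C_j$ with integers $n_j\geq 1$ and each $C_j$ an irreducible curve. Since $L$ is strictly nef, $L\cdot C_j>0$ for every $j$, and therefore
\[
\ell \;=\; L\cdot H^{n-1} \;=\; \sum_j n_j\,(L\cdot C_j)\;>\;0.
\]
When $n=1$ this is simply $\ell=\deg L>0$, again by strict nefness. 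Observe that it is crucial to apply strict nefness to the irreducible components $C_j$ rather than to the class $H^{n-1}$ itself.

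Next I would exploit the elementary fact that an effective divisor pairs nonnegatively with $H^{n-1}$. Suppose $H^0\bigl(X,(L^*)^{\otimes m}\otimes F\bigr)\neq 0$; then a nonzero section exhibits the class $-mL+F$ as an effective divisor $D_m\geq 0$. Since $D_m$ is effective and $H$ is ample, $D_m\cdot H^{n-1}\geq 0$, while on the other hand $D_m\cdot H^{n-1}=-m\,\ell+F\cdot H^{n-1}$. Combining these gives
\[
0\;\leq\; -m\,\ell+F\cdot H^{n-1},
\]
so $m\leq (F\cdot H^{n-1})/\ell$. Hence it suffices to take $m_0$ to be any integer strictly greater than $(F\cdot H^{n-1})/\ell$: for all $m\geq m_0$ the right-hand side is negative, which forces $H^0\bigl(X,(L^*)^{\otimes m}\otimes F\bigr)=0$. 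This establishes the \emph{moreover} assertion \eqref{formalvanishing}.

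Finally, the first claim $H^0(X,L^*)=0$ is the degenerate case $F=\sO_X$, $m=1$, and here no largeness of $m$ is needed: a nonzero section of $L^*$ would yield $-L\sim D\geq 0$, and the same pairing gives the immediate contradiction $0\leq D\cdot H^{n-1}=-\ell<0$. (Alternatively it is the rank-one instance of the vanishing $H^0(X,E^*)=0$ for strictly nef $E$ recorded earlier.) I do not anticipate a genuine obstacle; the single delicate point is the positivity $\ell>0$, which is exactly where strict nefness enters, and it is handled by decomposing the complete-intersection curve into its irreducible components. In particular the argument uses nothing beyond strict nefness — notably $L$ need not be big — and the bound $m_0=m_0(L,F)$ it produces depends only on the intersection numbers $L\cdot H^{n-1}$ and $F\cdot H^{n-1}$.
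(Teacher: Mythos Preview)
Your proof is correct but follows a genuinely different route from the paper's. The paper argues the first claim by noting that a nonzero section of $L^*$ would make $L$ both nef and anti-effective, hence numerically trivial, contradicting strict nefness; it then asserts that $L^*$ is not pseudo-effective and invokes an external result \cite{Yang2019} to obtain the asymptotic vanishing \eqref{formalvanishing}. You instead reduce everything to the single positive number $\ell=L\cdot H^{n-1}$, established by decomposing a complete-intersection curve into irreducible components, and derive both statements from the inequality $-m\ell+F\cdot H^{n-1}\geq 0$ forced by effectivity. Your approach is entirely self-contained, avoids the notion of pseudo-effectivity and the external citation, and produces an explicit bound $m_0>(F\cdot H^{n-1})/\ell$ depending only on intersection numbers. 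The paper's route has the advantage of isolating the conceptual reason---non-pseudo-effectivity of $L^*$---which is a stronger statement and links to a broader framework; your argument trades that for elementarity and effectivity of the constant.
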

\begin{proof} Suppose $H^0(X,L^*)\neq 0$, then $L^*$ is effective. Since $L$ is nef, we deduce that $L$ is trivial and this is a contradiction. Hence $H^0(X,L^*)=0$ and $L^*$ is not pseudo-effective. By \cite{Yang2019}, the vanishing theorem (\ref{formalvanishing}) holds.
\end{proof}

\noindent Recall that for an ample line bundle $L$, one has the Kodaira vanishing theorem.
For an ample vector bundle E, one can only deduce $H^0(X,E^*)=0$ and the higher cohomology groups $H^i(X,E^*)$ ($i\geqslant  1$) may not vanish. For instance, when $X=\bbP^n$ with $n\geqslant  2$ and $E=T\bbP^n$, one has
$$H^1(X,E^*)\cong H^{1,1}(X,\bbC)\cong \bbC\neq 0.$$
For strictly nef vector bundles, we have a similar vanishing theorem.
\begin{thm} Let $E$ be a strictly nef vector bundle over a projective manifold $X$. Then $$H^0(X,E^*)=0.$$
\end{thm}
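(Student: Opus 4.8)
The plan is to reduce the vanishing of $H^0(X,E^*)$ to a statement about a single curve, where the strict nefness can be exploited directly via the criterion of Proposition~\ref{cri0}. Suppose for contradiction that there is a nonzero global section $s\in H^0(X,E^*)$. Dualizing, $s$ is equivalent to a nonzero $\sO_X$-homomorphism $\varphi\colon E\to \sO_X$, i.e.\ a map from $E$ to the trivial line bundle. The natural thing is to look at the image of $\varphi$; this is a nonzero ideal sheaf $\sI\subset \sO_X$, and away from its cosupport $\varphi$ realizes $\sO_X$ as a quotient of $E$. I would like to produce from this a genuine line bundle quotient of $E$ along some curve on which the degree is forced to be $\leq 0$, contradicting Proposition~\ref{cri0}(2).

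The key steps, in order, are as follows. First, interpret $s\neq 0$ as a nonzero section of $\sO_{\bbP(E)}(1)$ on $\bbP(E)$ under the identification $H^0(X,E^*)\cong H^0(\bbP(E),\sO_{\bbP(E)}(1))$ coming from $\pi_*\sO_{\bbP(E)}(1)=E^*$ (with the Grothendieck convention matching the paper's definition of $\sO_E(1)$). Such a section $s$ cuts out an effective divisor $Z\in |\sO_{\bbP(E)}(1)|$ on $\bbP(E)$. Second, choose a suitable complete intersection curve inside this divisor: cutting $Z$ by $\dim X - 1$ general very ample divisors (or rather cutting $\bbP(E)$ by hyperplane sections to pass to a surface and then a curve) produces an irreducible curve $\widetilde C\subset \bbP(E)$ lying inside $Z$, hence with $\sO_{\bbP(E)}(1)\cdot \widetilde C \leq 0$ because $\widetilde C$ is contained in the effective divisor cut out by the section of that very line bundle. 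Third, translate this back through the tautological quotient $\pi^*E\to \sO_{\bbP(E)}(1)$: normalizing $\widetilde C$ to a smooth curve $C$ with finite map $\nu\colon C\to X$, the restriction yields a line bundle quotient $\nu^*E\to L$ with $\deg L = \sO_{\bbP(E)}(1)\cdot \widetilde C \leq 0$, directly contradicting the strict nefness criterion which demands $\deg L>0$.

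The cleaner route, which I would actually prefer to carry out, bypasses the Bertini argument by working with the section $s$ directly. A nonzero $\varphi\colon E\to \sO_X$ is surjective on a dense open set, so its image is $\sI_W\cdot\sO_X$ for some proper closed $W$; restricting to a general curve $C\to X$ (via Proposition~\ref{basicproperties}(1)) missing $W$ or handling it via the normalization, $\varphi$ pulls back to a nonzero map $\nu^*E\to \sO_C$ whose image is a line bundle $L\subset \sO_C$ of degree $\deg L\leq 0$; dualizing to a quotient and invoking Proposition~\ref{cri0} gives the contradiction. The main obstacle is ensuring that one genuinely obtains a \emph{line bundle quotient} (not merely a subsheaf or a map with torsion cokernel) with controlled degree: the image of $\varphi$ restricted to $C$ is an invertible subsheaf of $\sO_C$, but to apply the criterion I need a locally free quotient of $\nu^*E$, so I must pass to the saturation and verify the degree inequality survives, and I must check that the curve $C$ can be chosen so that $\varphi|_C$ is nonzero. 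This is the technical heart of the argument and where care is required; everything else is formal.
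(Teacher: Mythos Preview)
Your first approach via $\bbP(E)$ contains a genuine error: the implication ``$\widetilde C\subset Z$ with $Z\in|\sO_{\bbP(E)}(1)|$ forces $\sO_{\bbP(E)}(1)\cdot\widetilde C\leq 0$'' is false. Containment in an effective divisor only says that the restricted section vanishes identically; it says nothing about the degree of the line bundle on $\widetilde C$. Indeed, strictly nef (even ample) line bundles routinely have effective members containing curves on which the degree is positive. So this route, as written, does not produce a contradiction.

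Your second (``cleaner'') route is correct and is what you should keep. A nonzero $\varphi\colon E\to\sO_X$ restricts to a nonzero map on some smooth curve $\nu\colon C\to X$ (any curve not contained in the closed set where $\varphi$ vanishes; such curves exist since $\varphi\neq 0$). On a smooth curve the image of $\nu^*E\to\sO_C$ is an invertible ideal sheaf $L=\sO_C(-D)$ with $D\geq 0$, so $L$ is a line bundle quotient of $\nu^*E$ with $\deg L\leq 0$, contradicting Proposition~\ref{cri0}. Your worry about saturation is unnecessary here: the image of a map to $\sO_C$ is already locally free on a smooth curve, and it is a genuine quotient of $\nu^*E$ by construction.

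The paper's proof takes a slightly different and shorter path: it invokes \cite[Proposition~1.16]{DemaillyPeternellSchneider1994}, which says that a nonzero section of $E^*$ for $E$ nef is nowhere vanishing. This immediately upgrades $\sO_X\to E^*$ to a sub\emph{bundle}, hence gives a trivial quotient bundle $E\twoheadrightarrow\sO_X$ on $X$ itself, contradicting Proposition~\ref{basicproperties}(2). Your argument is more self-contained (you effectively reprove the needed special case of the DPS result by hand on a curve), while the paper's is a one-line reduction to a known global fact about nef bundles.
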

\begin{proof} Suppose $\sigma \in H^0(X,E^*)$ is a nonzero section. Then by \cite[Proposition~1.16]{DemaillyPeternellSchneider1994}, $\sigma$ does not vanish anywhere. This section gives a trivial subbundle of $E^*$ and so a trivial quotient bundle of the strictly nef vector bundle $E$. This contradicts to (2) of Proposition \ref{basicproperties}.
\end{proof}

\noindent Note that the  vanishing theorem in (\ref{formalvanishing}) does not hold for higher rank vector bundles. Indeed, let $E$ be the strictly nef vector bundle in Example \ref{Mumfordexample}, $X=\bbP(E)$ and $\pi:X\rightarrow C$ be the projection. Let $ F=K^{\otimes m}_C$ for some sufficiently large $m$. Then
$$H^0(C, \mathrm{Sym}^{\otimes k}E^*\otimes F)\neq 0$$
for all $k>0$.

\begin{remark} For a strictly nef vector bundle $E$ with rank $r\geqslant  2$, in general,  $H^0(X,\mathrm{Sym}^{\otimes k} E^*)=0$ dos not hold for $k\geqslant  2$.

\end{remark}

We give more examples on strictly nef vector bundle over higher
dimensional projective manifolds (see
\cite[Section~5]{LiuOuYang2020} for details). A line bundle $L$ over
a projective variety $X$ of dimension $n$ is called \emph{$k$-
strictly nef} if
$$L^{\dim Y}\cdot Y>0$$
for every irreducible subvariety $Y$ in $X$ with $0<\dim Y\leq k$.
Hence, $1$-strictly nef is exactly strictly nef, and an $n$-strictly
nef line bundle is ample.
\begin{thm}\cite[Lemma 3.2 and Theorem 6.1]{Subramanian1989}\label{Subramanian}
    Let $C$ be a smooth curve of genus $g\geqslant 2$. Then for any $r\geqslant 2$, there exists a Hermitian flat vector bundle $E$ of rank $r$ such that the tautological line bundle $\sO_{\bbP(E)}(1)$ is $(r-1)$-strictly nef. In particular, $E$ is strictly nef.
\end{thm}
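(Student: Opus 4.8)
The plan is to realize $E$ as the flat bundle of a carefully chosen unitary representation and to translate $(r-1)$-strict nefness into a statement about invariant subvarieties of the fibre $\bbP^{r-1}$. Write $C=\mathbb{H}/\Gamma$ with $\Gamma=\pi_1(C)$, and to a unitary representation $\rho\colon\Gamma\to U(r)$ associate the Hermitian flat bundle $E=E_\rho=\mathbb{H}\times_\rho\bbC^r$, so that $\bbP(E)=\mathbb{H}\times_\rho\bbP^{r-1}$ has dimension $r$ and the flat metric induces a metric on $\sO_{\bbP(E)}(1)$. First I would record the local picture: in a flat trivialization $\bbP(E)$ looks like $U\times\bbP^{r-1}$ with $U\subset C$, and the Chern curvature $\omega$ of $\sO_{\bbP(E)}(1)$ is the pull-back of the Fubini--Study form from the $\bbP^{r-1}$-factor. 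Thus $\omega\ge 0$ has constant rank $r-1$, and its one-dimensional kernel is exactly the horizontal distribution $H$ of the (flat, hence integrable) connection, whose leaves are the images of the slices $\mathbb{H}\times\{x\}$.

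The core step is a transversality dichotomy, computing intersection numbers as $\xi^{d}\cdot Y=\int_Y\omega^{d}\ge 0$, where $\xi=c_1(\sO_{\bbP(E)}(1))$ and $1\le d=\dim Y\le r-1$. If $Y$ lies in a fibre then $\xi^{d}\cdot Y=\deg Y>0$. If $Y$ dominates $C$, then because $\omega\ge 0$ has kernel the line field $H$, the form $\omega^{d}$ is strictly positive at a smooth point $y$ unless $T_yY\supseteq H_y$; hence $\int_Y\omega^{d}=0$ forces $Y$ to be everywhere tangent to $H$, i.e. a union of leaves. Lifting such a $Y$ to $\mathbb{H}\times\bbP^{r-1}$ gives $\widetilde Y=\mathbb{H}\times Z$ for a projective subvariety $Z\subset\bbP^{r-1}$ with $\dim Z=d-1\le r-2$, and $\Gamma$-invariance of $\widetilde Y$ forces $\rho(\gamma)Z=Z$ for all $\gamma$. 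Reversing the construction, an invariant $Z$ produces a horizontal $Y$ with $\xi^{\dim Y}\cdot Y=0$. Therefore $\sO_{\bbP(E)}(1)$ is $(r-1)$-strictly nef if and only if $\rho(\Gamma)$ preserves no proper subvariety of $\bbP^{r-1}$.

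It then remains to produce $\rho$ with this property, and here I would simply arrange $\rho(\Gamma)$ to be dense in $U(r)$: since $U(r)$ acts transitively on $\bbP^{r-1}$, any $\rho(\Gamma)$-invariant closed subvariety is, by passing to limits, $U(r)$-invariant, hence empty or all of $\bbP^{r-1}$. To build such a $\rho$ when $g\ge 2$, choose $A_1,B_1\in SU(r)$ topologically generating $SU(r)$ (a generic pair does), use that in the compact connected semisimple group $SU(r)$ every element is a commutator to write $[A_1,B_1]^{-1}=[A_2,B_2]$, and set $A_i=B_i=I$ for $i\ge 3$; then $\prod_{i=1}^{g}[A_i,B_i]=I$, so $\rho$ is a bona fide representation whose image contains the dense subgroup generated by $A_1,B_1$. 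The bundle $E=E_\rho$ is Hermitian flat of degree $0$ by construction, so it is not ample, and since $(r-1)\ge 1$ the $(r-1)$-strict nefness gives strict nefness of $\sO_{\bbP(E)}(1)$, that is of $E$, in particular.

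I expect the main obstacle to be the curvature computation together with the clean identification of $\ker\omega$ with the integrable horizontal foliation, since this is what upgrades the inequality $\int_Y\omega^{d}\ge 0$ to the sharp equivalence with the existence of an invariant $Z$ (the content of the quoted Lemma~3.2). One must be careful that $H$ is a line field, so that at a generic smooth point $T_yY$ either contains $H_y$ or meets it trivially, which is exactly what makes the dichotomy \emph{union of leaves} versus \emph{generically transverse} exhaustive; handling the singular locus of $Y$ and the possibly non-reduced fibres of $Y\to C$ requires a little care but does not affect the intersection number. By contrast, the representation-theoretic step is comparatively soft once one knows that dense surface-group representations into $U(r)$ exist, which is where the hypothesis $g\ge 2$ (and hence the availability of the extra generators $A_2,B_2$) is used.
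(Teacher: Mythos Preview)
The paper does not give its own proof of this statement; it simply quotes the result from \cite{Subramanian1989}, referring to Lemma~3.2 and Theorem~6.1 there. So there is nothing to compare against in the paper itself, and your reconstruction is being measured against Subramanian's original argument.

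Your proposal is essentially correct and, as far as one can infer from the citation, follows the same line as Subramanian: the cited Lemma~3.2 is precisely the equivalence you prove in the middle paragraph (the tautological class is $(r-1)$-strictly nef on $\bbP(E_\rho)$ if and only if $\rho(\Gamma)$ leaves no proper subvariety of $\bbP^{r-1}$ invariant), and the cited Theorem~6.1 is the existence of a representation with this property. The curvature/foliation argument you sketch is the standard one: since $E$ is Hermitian flat, the Chern form $\omega$ of $\sO_{\bbP(E)}(1)$ has one-dimensional kernel equal to the flat horizontal line field $H$, and for a $d$-dimensional complex subspace $V$ of the tangent space one has $\omega^{d}|_{V}=0$ if and only if $H\subset V$; thus $\int_Y\omega^{d}=0$ forces $Y$ to be saturated by horizontal leaves and hence to descend from $\mathbb{H}\times Z$ with $Z$ $\rho(\Gamma)$-invariant. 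One small point worth tightening: when you pass from ``$\omega^{d}|_{Y}=0$ at all smooth points'' to ``$\widetilde Y=\mathbb{H}\times Z$'', you should argue that $Y$ is \emph{globally} a union of leaves (e.g.\ the horizontal foliation is holomorphic with closed leaves, so local invariance on the smooth locus propagates), and that the fibre $Z=\widetilde Y\cap(\{\tau\}\times\bbP^{r-1})$ is independent of $\tau$ by analytic continuation along horizontal leaves.

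For the existence step, your construction using Goto's theorem (every element of a compact connected semisimple Lie group is a commutator) to solve $\prod_{i=1}^{g}[A_i,B_i]=1$ with $\langle A_1,B_1\rangle$ dense in $SU(r)$ is clean and uses $g\geqslant 2$ in the expected way. Density of $\rho(\Gamma)$ in $SU(r)$ indeed forces any invariant closed subvariety of $\bbP^{r-1}$ to be $SU(r)$-invariant and hence trivial. This is exactly the kind of argument one expects behind the cited Theorem~6.1.
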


\noindent  Fix  a smooth  curve $C$ of genus $g \geqslant 2$. Let  $r\geqslant 2$ and  $E$  be a vector bundle of rank $r$ provided in Theorem \ref{Subramanian}.

\begin{example}
    \label{example:subbundles}
    Let $X=\bbP(E)$. Then we have the following relative Euler exact sequence
    \[0\rightarrow \sO_X\rightarrow p^*E^*\otimes\sO_{\bbP(E)}(1)\rightarrow T_{X/C}\rightarrow 0,\]
    where $p\colon X=\bbP(E)\rightarrow C$ is the natural projection.
    It is shown in \cite[Example~5.9]{LiuOuYang2020} that $ p^*E^*\otimes \sO_{\bbP(E)}(1)$ is strictly nef.
\end{example}

\begin{example}
    We consider the following extension of vector bundles
    \[0\rightarrow Q\rightarrow G\rightarrow E^*\rightarrow 0,\]
    where $Q$ is a nef vector bundle of positive rank.    Since $E^*$ is Hermitian flat, it is numerically flat. In particular, $E^*$ is nef and so is $G$ (\cite[Proposition 1.15]{DemaillyPeternellSchneider1994}).
    Let $X=\bbP(G)$ and  $p: X= \bbP(G)\rightarrow C$ the natural projection. Then we have the following relative Euler sequence
    \[0\rightarrow \sO_X\rightarrow p^*G^*\otimes\sO_{\bbP(G)}(1)\rightarrow T_{X/C}\rightarrow 0.\]
    Since $E$ is a subbundle of $G^*$, it follows that
    $ p^*E\otimes \sO_{\bbP(G)}(1)$ is a subbundle of $p^*G^*\otimes \sO_{\bbP(G)}(1)$.
    We proved in \cite[Example~5.10]{LiuOuYang2020} that  $ p^*E\otimes \sO_{\bbP(G)}(1)$ is strictly nef and  the restriction of $F$ to fibers of $p$ is isomorphic to $\sO_{\bbP^d}(1)^{\oplus r}$. In particular,  $F$ is not a subbundle of $T_{X/C}$.
\end{example}

\vskip 2\baselineskip

\section{Strictly nef vector bundles}

 In this section, we consider strictly nef vector bundles over  higher dimensional projective manifolds.

\begin{thm} Let $E$ be a strictly nef vector bundle over a projective manifold $X$.   If $-K_X$ is nef and big, then $\det E$ is ample.
\end{thm}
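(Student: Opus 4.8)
The plan is to verify ampleness of $\det E$ through Kleiman's criterion, reducing everything to the behaviour of $E$ on rational curves. First I would observe that the hypotheses force $X$ to be of Fano type: since $-K_X$ is nef and big, Kodaira's lemma lets me write $-K_X\equiv A+\Delta_0$ with $A$ an ample $\bbQ$-divisor and $\Delta_0\geq 0$ effective, so that for small $\varepsilon>0$ the divisor $-(K_X+\varepsilon\Delta_0)\equiv (1-\varepsilon)(-K_X)+\varepsilon A$ is ample while $(X,\varepsilon\Delta_0)$ is klt. (I would also note in passing that $\det E$ is nef, being the determinant of the nef bundle $E$, although this is not strictly needed below.)

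Next I would apply the Cone Theorem to the klt log Fano pair $(X,\Delta)$ with $\Delta=\varepsilon\Delta_0$. Because $-(K_X+\Delta)$ is ample, every nonzero class of $\NEX$ is $(K_X+\Delta)$-negative, and the Cone Theorem then yields that $\NEX$ is rational polyhedral, generated by finitely many rational curves $C_1,\dots,C_N$. This is the step that genuinely exploits nefness and bigness of $-K_X$ at the same time, and it is exactly what fails in Mumford's Example \ref{Mumfordexample}: there the cone contains a $K_X$-trivial direction represented by the high-genus base curve, on which $\det E$ is numerically trivial, whereas the Fano-type perturbation guarantees that no such non-rational extremal direction survives.

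It then remains to evaluate $\det E$ on the generators. For each $i$ I would pick a nonconstant morphism $f_i\colon\bbP^1\to C_i\subset X$; by Proposition \ref{basicproperties}(1) the pullback $f_i^*E$ is strictly nef on $\bbP^1$, and a strictly nef bundle on $\bbP^1$ is ample: splitting $f_i^*E=\bigoplus_j\sO_{\bbP^1}(a_j)$ and applying Proposition \ref{cri0}(2) to the quotient onto a summand of minimal degree forces every $a_j\geq 1$. Hence $\det E\cdot C_i=\sum_j a_j\geq \mathrm{rank}\,E>0$. Since $\NEX=\sum_{i=1}^N\bbR_{\geq 0}[C_i]$ and the linear functional $\det E$ is strictly positive on each generator, it is strictly positive on all of $\NEX\setminus\{0\}$, so $\det E$ is ample by Kleiman's criterion.

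The main obstacle is the cone-theoretic step: one must know that under the single assumption ``nef and big'' the entire Mori cone becomes rational polyhedral with rational-curve generators, \emph{including} the a priori $K_X$-trivial rays on which $\det E$ might degenerate. I would resolve this precisely via the Fano-type perturbation, which turns every extremal ray into a $(K_X+\Delta)$-negative one so that the standard klt Cone Theorem delivers rationality and finiteness; once that is in place, the evaluation on $\bbP^1$ and the concluding application of Kleiman's criterion are routine.
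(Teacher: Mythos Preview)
Your argument is correct, but it takes a genuinely different route from the paper's own proof. The paper does not touch the Mori cone at all: in rank one it observes that $E-K_X$ is nef and big, invokes the Kawamata--Reid--Shokurov base point free theorem to get $E$ semi-ample, and concludes by Lemma~\ref{strictlynefandsemiample}. In higher rank it passes to $Y=\bbP(E)$, shows that $L=\sO_E(m)\otimes\pi^*(\det E)$ is strictly nef with $L-K_Y$ nef and big, applies the base point free theorem again to make $L$ ample, and then pushes forward via Mourougane's positivity of direct images to conclude that $\pi_*(K_{Y/X}\otimes L^{\otimes k})\cong \Sym^{k_0}E\otimes(\det E)^{\otimes k_1}$ is ample, whence $\det E$ is ample.

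Your approach replaces all of this by a single appeal to the klt Cone Theorem for the log Fano perturbation $(X,\varepsilon\Delta_0)$, reducing the question to checking $\det E$ on finitely many rational curves, where the splitting behaviour on $\bbP^1$ and Proposition~\ref{cri0} finish it off. This is arguably cleaner and more geometric; it avoids the passage to $\bbP(E)$ and the direct-image machinery, and it even yields the effective bound $\det E\cdot C_i\geq r$ on each extremal rational curve. The paper's route, by contrast, stays closer to the semi-ampleness philosophy that also underlies Conjecture~\ref{LOYconjecture} and Proposition~\ref{SerranoconjectureimpliesLOYconjecture}, and its direct-image step produces the stronger intermediate statement that suitable twists $\Sym^{k_0}E\otimes(\det E)^{\otimes k_1}$ are ample. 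Both proofs ultimately rest on comparably deep MMP input (Cone Theorem versus Base Point Free Theorem), so neither is more elementary in that respect.

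One cosmetic point: your equality $\det E\cdot C_i=\sum_j a_j$ holds as written when $f_i\colon\bbP^1\to C_i$ is the normalization; for a general nonconstant $f_i$ you would pick up the degree of $f_i$ as a factor, but the strict positivity (and hence the conclusion) is of course unaffected.
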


\begin{proof} If $E$ is a strictly nef line bundle, then $E-K_X$ is  nef and big. By
    Kawamata-Reid-Shokurov base point free theorem, $E$ is semi-ample. Thanks to Lemma \ref{strictlynefandsemiample}, $E$ is ample.
    If $E$ has rank
     $r\geqslant 2$,   we consider the projective bundle $Y=\bbP(E)$.  Let  $\sO_E(1)$ be the tautological line bundle of the projection $\pi:Y\rightarrow X$. By the projection
    formula, we have $$ -K_Y=\sO_E(r)\otimes \pi^*(-K_X)\otimes \pi^*(\det
    E^*).$$
     For any $m>0$, the line bundle $L=\sO_E(m)\otimes \pi^*(\det
    E)$ is strictly nef. Since $$ L-K_Y=\sO_E(m+r)\otimes \pi^*(-K_X),$$
    we deduce $L-K_Y$ is strictly nef. On the other hand, $L-K_Y$ is big. Indeed,
    since both $\sO_{E}(1)$ and $-K_X$ are nef, the top intersection number \begin{eqnarray*}  (L-K_Y)^{n+
        r-1}&=&(\sO_E(m+r)\otimes \pi^*(-K_X))^{n+r-1}\\
    &\geq &\left(\sO_{E}(m+r)\right)^{r-1} \cdot
    \left(\pi^*(-K_X)\right)^{n}>0. \end{eqnarray*}
    Therefore, by the base
    point free theorem again, $L$ is semi-ample and so $L$ is ample. By the positivity of direct image sheaves (\cite{Mourougane1997}), we deduce that $\pi_*(K_{Y/X}\otimes L^{\otimes k})$ is ample for $k$ large enough.
    By using the projection formula, one can see that  the ample vector bundle $\pi_*(K_{Y/X}\otimes L^{\otimes k})$ is of the form  $\mathrm{Sym}^{\otimes k_0}E\otimes (\det E)^{\otimes k_1}$  where  $k_0$ and $k_1$ are some positive integers. In particular, $\det E$ is
    ample.
    \end{proof}

\noindent More generally, we propose the following conjecture.

\begin{conj}\label{LOYconjecture} Let $E$ be a strictly nef vector bundle over a projective manifold $X$.   If $-K_X$ is nef, then $\det E$ is ample.
\end{conj}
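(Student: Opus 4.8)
The plan is to retrace the proof of the preceding theorem and to isolate the single place where the bigness of $-K_X$ intervenes, and then to supply a substitute for that input when $-K_X$ is merely nef. As before I would pass to $Y=\bbP(E)$ with tautological bundle $\sO_E(1)$ and projection $\pi\colon Y\to X$, and set $L=\sO_E(m)\otimes\pi^*\det E$ for $m>0$. Since a determinant of a nef bundle is nef, $\det E$ is nef, so by Proposition~\ref{basicproperties} the line bundle $L$ is strictly nef and $L-K_Y=\sO_E(m+r)\otimes\pi^*(-K_X)$ is again strictly nef. The entire difficulty is now concentrated in proving that $L$ is \emph{semi-ample}: once this is known, Lemma~\ref{strictlynefandsemiample} makes $L$ ample, and the positivity of direct images (\cite{Mourougane1997}) transports ampleness from $\pi_*(K_{Y/X}\otimes L^{\otimes k})\cong\mathrm{Sym}^{\otimes k_0}E\otimes(\det E)^{\otimes k_1}$ down to $\det E$ verbatim as in the big case. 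In the previous theorem semi-ampleness came from the Kawamata--Reid--Shokurov base point free theorem applied to $L$ with $L-K_Y$ nef \emph{and big}; when $-K_X$ is not big this theorem is unavailable, and this is the crux of the conjecture.

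To produce the missing semi-ampleness I would first simplify the ambient geometry by structure theory. Ampleness of $\det E$ may be tested after a finite étale base change, compatibly with Proposition~\ref{basicproperties}(5)--(6), and by the Cao--Höring decomposition theorem (\cite{CaoHoering2019}) the Albanese map $\alpha\colon X\to A$ of a manifold with $-K_X$ nef is a smooth fibration with rationally connected fibres, becoming a product after a suitable étale cover. Since the fibres are rationally connected, their Picard groups carry no continuous part, so a nef class on the product is ample exactly when its restrictions to the rationally connected fibre $F$ (with $-K_F$ nef) and to the abelian base $A$ are both ample. This splits the conjecture into two model problems: $X$ rationally connected with $-K_X$ nef, and $X$ an abelian variety. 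On an abelian variety I would use that a nef non-ample line bundle is numerically trivial along the translates of a positive-dimensional abelian subvariety $B$; restricting $E$ to curves in $B$ and invoking the fact that strictly nef bundles on elliptic curves are ample handles the case where $B$ contains an elliptic curve, leaving the case of a simple abelian variety of dimension $\geq 2$ — where only curves of genus $\geq 2$ occur and Mumford's degeneration is not excluded fibrewise — as a genuinely higher-dimensional positivity question.

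The main obstacle, then, is exactly the replacement of the bigness hypothesis: one must prove that the strictly nef but non-big divisor $\sO_E(m+r)\otimes\pi^*(-K_X)$ on $Y$ is semi-ample when the numerical dimension $\nu(-K_X)$ is strictly smaller than $\dim X$. This is precisely the form of statement predicted by the generalized abundance conjecture of Lazić--Peternell (\cite{LazicPeternell2020,LazicPeternell2020a}) — a divisor of shape (log canonical)$+$(nef) that is nef and abundant should be semi-ample — so granting that conjecture the argument closes, consistently with the remark following Conjecture~\ref{LOYconjecture1}. Unconditionally, I expect the hardest case to be a rationally connected $X$ with $-K_X$ nef of small numerical dimension (for instance a rational elliptic-type surface with $K_X^2=0$), where neither the base point free theorem nor rational connectivity of a strictly nef anticanonical bundle is available, and where a direct semi-ampleness or direct-image positivity argument would have to be found.
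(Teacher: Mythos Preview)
The statement you are attempting to prove is labeled a \emph{conjecture} in the paper, and the paper offers no proof of it. What the paper does provide is (i) the remark in the introduction that Conjecture~\ref{LOYconjecture1} follows from the generalized abundance conjecture of Lazi\'c--Peternell, and (ii) Proposition~\ref{SerranoconjectureimpliesLOYconjecture}, which shows it also follows from Serrano's Conjecture~\ref{Serranoconjecture1} via a short direct-image argument on $\bbP(E)$. So there is no ``paper's own proof'' to compare against; the benchmark is only these two conditional reductions.

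Your proposal is honest about its status: it is not a proof but a strategy, and you yourself identify that the missing semi-ampleness of $L$ on $Y=\bbP(E)$ is exactly an instance of generalized abundance. In that sense you arrive at the same endpoint as the paper's remark (i), by essentially the same mechanism (run the base point free argument on $\bbP(E)$ and observe that the bigness hypothesis is the only obstruction). Your additional structural reduction via the Cao--H\"oring decomposition is a reasonable attempt to make progress beyond the paper, and the splitting into the rationally connected and abelian factors is sound since $H^1(F,\sO_F)=0$ forces $\pic(F\times A)\cong\pic(F)\times\pic(A)$. However, neither subcase closes: on a simple abelian variety of dimension $\geq 2$ you have no elliptic curves to invoke the paper's curve result, and on a rationally connected manifold with $-K_X$ nef but not big you are back to needing abundance-type input. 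So the proposal does not advance beyond the conditional reductions already recorded in the paper, and the conjecture remains open as stated.
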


\noindent  It is known that every strictly nef line bundle over an abelian variety is ample (\cite[Proposition~1.4]{Serrano1995}), and Chaudhuri proved in \cite{Chaudhuri2020} that every strictly nef homogeneous bundle
on a complex flag variety is ample. We observe that Conjecture \ref{LOYconjecture} can be implied by Serrano's Conjecture \ref{Serranoconjecture1}.

\begin{prop}\label{SerranoconjectureimpliesLOYconjecture} Conjecture \ref{Serranoconjecture1} implies Conjecture \ref{LOYconjecture}.
\end{prop}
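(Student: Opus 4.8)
The plan is to apply Serrano's Conjecture \ref{Serranoconjecture1} on the projectivized bundle $Y=\bbP(E)$ and then transport the resulting ampleness back down to $X$, using the hypothesis that $-K_X$ is nef precisely to cancel the canonical twist produced by Serrano's statement.

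First I would set $Y=\bbP(E)$ with projection $\pi\colon Y\to X$ and tautological line bundle $\sO_E(1)$, writing $r=\rk E$ and $n=\dim X$. Since $E$ is strictly nef, $\sO_E(1)$ is a strictly nef line bundle on the projective manifold $Y$, so Conjecture \ref{Serranoconjecture1} applies: for every $m\geq \dim Y+2=n+r+1$ the line bundle $A:=K_Y\otimes\sO_E(m)$ is ample. Using the relative canonical bundle formula $K_Y=\sO_E(-r)\otimes\pi^*(K_X\otimes\det E)$, one rewrites $A=\sO_E(m-r)\otimes\pi^*(K_X\otimes\det E)$, noting that $m-r>0$.

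The key step is to remove the $K_X$ factor. Since $-K_X$ is nef, $\pi^*(-K_X)$ is nef on $Y$, and the tensor product of an ample line bundle with a nef one is ample; hence
$$B:=A\otimes\pi^*(-K_X)=\sO_E(m-r)\otimes\pi^*(\det E)$$
is ample on $Y$. This is the only place where the hypothesis enters, and it is exactly what converts the twist into a pure $\det E$-twist. I would then descend to $X$ by the same mechanism used in the preceding theorem: by positivity of direct images (\cite{Mourougane1997}), $\pi_*(K_{Y/X}\otimes B^{\otimes k})$ is an ample vector bundle on $X$ for $k\gg 0$, and a projection-formula computation identifies it with $\mathrm{Sym}^{\otimes(k(m-r)-r)}E\otimes(\det E)^{\otimes(k+1)}$, where the first exponent is $\geq 0$ for $k$ large. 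Taking determinants and using that an ample vector bundle has ample determinant, the line bundle $\det\bigl(\mathrm{Sym}^{\otimes k_0}E\otimes(\det E)^{\otimes k_1}\bigr)$ is a strictly positive power of $\det E$; therefore $\det E$ is ample.

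The computations here are routine bookkeeping; the one genuinely structural point, and the heart of the implication, is the cancellation in the key step. Serrano's conjecture naturally outputs an ample class carrying a $K_Y$, whose horizontal part is $\pi^*K_X$, and this contribution can be absorbed into the ample cone only when $-K_X$ is nef. I do not expect a serious obstacle beyond keeping track of the exponents in the final determinant computation, since the passage from ampleness of $\sO_E(m-r)\otimes\pi^*(\det E)$ on $\bbP(E)$ to ampleness of $\det E$ on $X$ is not a formal descent and is precisely what the direct-image-plus-determinant argument is designed to handle.
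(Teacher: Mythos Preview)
Your proposal is correct and follows essentially the same route as the paper: pass to $Y=\bbP(E)$, apply Serrano's conjecture to $\sO_E(1)$, tensor with the nef class $\pi^*(-K_X)$ to obtain that $K_{Y/X}\otimes\sO_E(m)=\sO_E(m-r)\otimes\pi^*(\det E)$ is ample, then push forward $K_{Y/X}\otimes B^{\otimes k}$ and take determinants to conclude that $\det E$ is ample. The only cosmetic difference is that the paper states the direct-image step for $k=1$ while you (more carefully) take $k\gg 0$ before invoking \cite{Mourougane1997}; the substance is identical.
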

\begin{proof} Suppose  Conjecture \ref{Serranoconjecture1} is valid. Let $Y=\bbP(E)$, $L=\sO_{\bbP(E)}(1)$ and $\pi:Y\rightarrow X$ be the projection. For large $m$, $K_Y\otimes L^{\otimes m}$ is ample. Since $-K_X$ is nef,  $K_{Y/X}\otimes L^{\otimes m}$ is ample. We know $\det \pi_*\left(K_{Y/X}\otimes\left(K_{Y/X}\otimes L^{\otimes m}\right)\right)$ is  ample and so is $\det E$.
\end{proof}

 The following result is proved in \cite[Section~3]{LiOuYang2019}.
\begin{prop}\label{flat} Let $E$  be a strictly nef vector bundle
    over a projective manifold $X$. If either of the following holds
    \begin{enumerate}
        \item the Kodaira dimension $\kappa (X)$ satisfies $0 \leqslant \kappa (X)<\dim X$,

        \item $-K_X$ is pseudo-effective,
    \end{enumerate}  then  $\det E$ is not numerically trivial.
\end{prop}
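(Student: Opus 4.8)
The plan is to argue by contradiction: assume $\det E$ is numerically trivial. Since a strictly nef bundle is in particular nef, the vanishing $c_1(E)=0$ makes $E$ \emph{numerically flat}, so by the structure theorem of Demailly--Peternell--Schneider \cite{DemaillyPeternellSchneider1994} it carries a filtration by subbundles with Hermitian flat successive quotients, all its Chern classes vanish numerically, and it admits a Hermitian flat quotient. The entire argument then consists in showing that, under either hypothesis, such a bundle cannot be strictly nef; the contradiction will always take the form of a curve on which a numerically trivial quotient (or the whole bundle) has degree zero, in violation of Proposition~\ref{cri0}.

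I would dispose of (2) first. By the theorem of Boucksom--Demailly--Paun--Peternell, $X$ is uniruled exactly when $K_X$ is not pseudo-effective. If $K_X$ is pseudo-effective then, since $-K_X$ is too and the pseudo-effective cone contains no line, $K_X\equiv 0$; hence $\kappa(X)=0$ and we fall back into case (1). If instead $X$ is uniruled, I pick a nonconstant $f\colon\bbP^1\to X$ from a general rational curve: by Proposition~\ref{basicproperties}(1) the pullback $f^*E$ is strictly nef on $\bbP^1$, hence ample, so $\deg\det(f^*E)>0$, contradicting $\det E\equiv 0$. Thus (2) is settled once the base case $\kappa(X)=0$ of (1) is handled.

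For (1) I would first reduce the range $0<\kappa(X)<\dim X$ to this base case. Let $F\subset X$ be the Iitaka fibre through a general point; then $F$ is a smooth projective subvariety with $\dim F=\dim X-\kappa(X)\geq 1$ and $\kappa(F)=0$, and since $F\hookrightarrow X$ is a finite morphism, Proposition~\ref{basicproperties}(5) gives that $E|_F$ is strictly nef while $\det(E|_F)=(\det E)|_F\equiv 0$. So it suffices to derive the contradiction when $\kappa=0$. The mechanism is transparent on an abelian variety $A$: there $\pi_1(A)$ is abelian, so the unitary representation underlying $E$ diagonalizes and $E\cong\bigoplus_i L_i$ with $L_i\in\Pic0(A)$; each $L_i$ is numerically trivial, hence not strictly nef, contradicting Proposition~\ref{basicproperties}(3). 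For general $\kappa(X)=0$ the plan is to reach this picture by passing to a finite \'etale cover $X'\to X$ — which preserves strict nefness, again by Proposition~\ref{basicproperties}(5) — on which the Beauville--Bogomolov decomposition writes $X'$ as a product of an abelian variety with simply connected Calabi--Yau and hyperk\"ahler factors, so that $\pi_1(X')$ is abelian, $E|_{X'}$ splits into numerically trivial line bundles, and the same contradiction applies (for the line-bundle positivity one may invoke Serrano \cite{Serrano1995}).

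The hard part is precisely this base case. Applying the Beauville--Bogomolov decomposition presupposes $K_X\equiv 0$, that is, that the \emph{numerical} dimension of $K_X$ vanishes, whereas the hypothesis supplies only the \emph{Iitaka} dimension $\kappa(X)=0$; bridging this gap is the abundance problem in numerical dimension zero. The essential task is therefore to exploit the rigidity carried by a strictly nef numerically flat bundle — and the fact that finite \'etale covers, unlike birational pullbacks, keep strict nefness intact — in order to force $\pi_1(X)$ to be virtually abelian, so that the flat bundle splits off numerically trivial line bundles, \emph{without} appealing to the full abundance conjecture.
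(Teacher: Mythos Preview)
The paper itself does not prove this proposition --- it only records the statement and cites \cite[Section~3]{LiOuYang2019} --- so there is no in-text argument to compare against. I evaluate your plan on its own merits.

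Your treatment of case~(2) is essentially complete and correct. The uniruled branch is clean, and in the non-uniruled branch you genuinely obtain $K_X\equiv 0$ (not merely $\kappa(X)=0$), so the Beauville--Bogomolov decomposition applies directly and your $\pi_1$-splitting argument finishes it. Do not phrase this as ``falling back into case~(1)'': that makes it look as though the gap you later identify also infects~(2), which it does not.

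Case~(1) is where your plan is genuinely incomplete. There is first a technical issue you gloss over: the Iitaka fibration is only a rational map, so ``the Iitaka fibre $F\subset X$ through a general point'' is not a priori a smooth closed subvariety of $X$ with $\kappa(F)=0$; that conclusion holds for a general fibre of a \emph{resolved} morphism $X'\to Z$, but then the composite $F'\to X'\to X$ need not be finite and Proposition~\ref{basicproperties}(5) does not apply. A useful observation you overlook is that your own uniruled argument already shows $X$ carries no rational curves whatsoever (any $f\colon\bbP^1\to X$ would make $f^*E$ strictly nef on $\bbP^1$, hence ample, contradicting $c_1(E)\equiv 0$), and therefore $K_X$ is already nef by the cone theorem; this at least places the Iitaka map squarely in the abundance framework. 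But that brings us to the gap you yourself flag: passing from $\kappa=0$ to $K_F\equiv 0$ is abundance, and your proposed substitute --- ``exploit the rigidity of a strictly nef numerically flat bundle to force $\pi_1$ virtually abelian'' --- is a restatement of the desired conclusion, not an argument. You give no mechanism by which the hypothesis $\kappa(X)=0$ (as opposed to $K_X\equiv 0$) constrains $\pi_1(X)$. As written, case~(1) remains a program rather than a proof; you should consult the cited reference for the actual argument.
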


\vskip 2\baselineskip

\section{Geometry of projective manifold with strictly nef bundles}

In this section, we describe the geometry related to strictly nef  and ample bundles.  As we mentioned before,     the following result is obtained  in \cite[Theorem~1.3]{LiOuYang2019}:
\begin{thm} Let $X$ be a projective manifold. If $T_X$ is strictly nef, then $X$ is isomorphic to a projective space.
    \end{thm}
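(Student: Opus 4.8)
The plan is to reduce the statement to the Cho--Miyaoka--Shepherd-Barron characterization of projective space by producing, through a general point of $X$, a minimal rational curve of anticanonical degree $n+1$, where $n=\dim X$. The argument splits into two parts: first showing that $X$ is rationally connected, so that such rational curves exist at all, and then controlling the splitting type of $T_X$ along a minimal rational curve.

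For the first part, observe that a strictly nef bundle is in particular nef, so $T_X$ is nef. By the structure theory of projective manifolds with nef tangent bundle due to Demailly, Peternell and Schneider \cite{DemaillyPeternellSchneider1994}, after replacing $X$ by a finite étale cover the Albanese morphism $f\colon X\to A$ is a smooth surjective fibration onto an abelian variety $A$ with rationally connected fibres. Passing to this étale cover is harmless: by part (5) of Proposition \ref{basicproperties} the pullback of $T_X$ stays strictly nef, and it coincides with the tangent bundle of the cover. Now if $\dim A>0$, the relative tangent sequence $0\to T_{X/A}\to T_X\to f^*T_A\to 0$ exhibits $f^*T_A\cong\sO_X^{\oplus\dim A}$ as a nonzero trivial quotient of $T_X$; dualizing gives a nonzero section of $T_X^*$, contradicting the vanishing $H^0(X,T_X^*)=0$ for strictly nef $T_X$ (equivalently, this contradicts part (2) of Proposition \ref{basicproperties}, since a trivial quotient is not strictly nef). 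Hence $A$ is a point, $X$ is Fano, and in particular $X$ is rationally connected.

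For the second part, choose a minimal rational curve family and let $\ell\colon\bbP^1\to X$ be a general, hence free, member. The pullback $\ell^*T_X$ is again strictly nef by part (1) of Proposition \ref{basicproperties}, and since every strictly nef bundle over $\bbP^1$ is ample (a direct consequence of Proposition \ref{cri0}), we may write $\ell^*T_X\cong\bigoplus_{i=1}^n\sO_{\bbP^1}(a_i)$ with every $a_i\geq 1$. On the other hand, minimality forces the standard splitting $\ell^*T_X\cong\sO(2)\oplus\sO(1)^{\oplus d}\oplus\sO^{\oplus(n-1-d)}$ with $d=-K_X\cdot\ell-2\leq n-1$. The positivity $a_i\geq 1$ rules out any trivial summand, so $d=n-1$ and therefore $-K_X\cdot\ell=n+1$.

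Finally, a smooth projective $n$-fold carrying a minimal rational curve of anticanonical degree $n+1$ is isomorphic to $\bbP^n$ by the Cho--Miyaoka--Shepherd-Barron characterization of projective space, which completes the proof. The step I expect to be the main obstacle is the first one: unlike ampleness, strict nefness does not by itself prevent $\det T_X$ from being numerically trivial along curves, so Mori's bend-and-break cannot be run directly, and one must instead invoke the global structure theory of nef tangent bundles to manufacture the genuine trivial quotient that strict nefness forbids.
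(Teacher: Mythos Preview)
The paper is a survey and does not itself prove this theorem; it only states the result and refers to \cite[Theorem~1.4]{LiOuYang2019}. There is therefore no in-paper proof to compare against.

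Your argument is correct and is in fact the standard route (and the one taken in the cited reference): use the Demailly--Peternell--Schneider structure theorem for manifolds with nef tangent bundle to force the Albanese of an \'etale cover to be a point, hence $X$ is Fano; then restrict $T_X$ to a general minimal free rational curve, where strict nefness (equivalently, ampleness on $\bbP^1$) kills the trivial summands in the splitting $\sO(2)\oplus\sO(1)^{\oplus d}\oplus\sO^{\oplus(n-1-d)}$, giving $-K_X\cdot\ell=n+1$; and finish with Cho--Miyaoka--Shepherd-Barron. One cosmetic point: after passing to the \'etale cover $\tilde X\to X$ you write ``$X$ is Fano'', whereas the DPS argument literally gives that $\tilde X$ is Fano; but Fano manifolds are simply connected, so the cover is trivial and $\tilde X=X$, and your conclusion stands.
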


\noindent Moreover,
a characterization of quadrics is  established in \cite[Theorem~1.5]{LiOuYang2019}, which is analogous to classical results   of  Cho-Miyaoka-Shepherd-Barron (\cite{ChoMiyaokaShepherd-Barron2002}) and Dedieu-Hoering (\cite{DedieuHoering2017}).
\begin{thm}
    Let $X$ be a  projective manifold of
    dimension $n\geq 3$. Suppose that $\bigwedge^2T_X$ is strictly nef,
    then $X$ is isomorphic to $\mathbb P^n$ or a quadric $\mathbb Q^n$.
\end{thm}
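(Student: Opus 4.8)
The plan is to reduce the statement to a positivity property of the anticanonical bundle together with a sharp lower bound for the anticanonical degree of rational curves, and then to feed these into the numerical characterizations of $\bbP^n$ and $\bbQ^n$.

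\emph{Step 1 (anticanonical positivity and uniruledness).} Since $T_X$ has rank $n$, one has $\det\bigl(\bigwedge^2 T_X\bigr)\cong\mathcal{O}_X\bigl(-(n-1)K_X\bigr)$. As $\bigwedge^2 T_X$ is strictly nef it is in particular nef, so its determinant is a nef line bundle; hence $-(n-1)K_X$, and therefore $-K_X$, is nef. If $X$ were not uniruled then $K_X$ would be pseudoeffective by the theorem of Boucksom-Demailly-Paun-Peternell; combined with $-K_X$ nef this forces $-K_X\equiv 0$ (the pseudoeffective cone is salient), so that $\det(\bigwedge^2 T_X)$ would be numerically trivial, contradicting Proposition \ref{flat}(2) applied to the strictly nef bundle $\bigwedge^2 T_X$. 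Thus $X$ is uniruled and carries a minimal dominating family of rational curves.

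\emph{Step 2 (degree bound along rational curves).} For any non-constant morphism $f\colon\bbP^1\to X$, the bundle $f^*\bigwedge^2 T_X=\bigwedge^2 f^*T_X$ is strictly nef by Proposition \ref{basicproperties}(1), hence ample, since strict nefness and ampleness coincide over $\bbP^1$. Writing $f^*T_X=\bigoplus_{i=1}^n\mathcal{O}(a_i)$ with $a_1\ge\cdots\ge a_n$, ampleness of $\bigwedge^2 f^*T_X$ means $a_i+a_j\ge 1$ for all $i<j$, while the nonzero differential $\mathcal{O}(2)=T_{\bbP^1}\to f^*T_X$ forces $a_1\ge 2$. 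A short case analysis on the sign of $a_n$ then yields $-K_X\cdot C=\sum_i a_i\ge n$ for every rational curve $C$ when $n\ge 3$, so the pseudo-index of $X$ is at least $n$. Moreover, for a general member $C$ of a minimal dominating family, $C$ is free, so all $a_i\ge 0$, and equality $-K_X\cdot C=n$ then forces the splitting type $\mathcal{O}(2)\oplus\mathcal{O}(1)^{\oplus(n-2)}\oplus\mathcal{O}$, which is exactly the splitting of $T_X$ along a line on a quadric.

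\emph{Step 3 (Fano property and classification).} It remains to promote the nef class $-K_X$ to an ample one, i.e. to prove that $X$ is Fano. I would argue through the cone theorem: every $K_X$-negative extremal ray is generated by a rational curve, whose anticanonical degree is at least $n$ by Step 2. A birational extremal contraction has length bounded by the dimension of its exceptional fibres, which is at most $n-1<n$, and is therefore excluded; a fibre-type contraction is constrained to have $(n-1)$-dimensional fibres over a curve, and the coexistence of two independent such fibrations is incompatible with pseudo-index $\ge n$ for $n\ge 3$. This forces $\rho(X)=1$, whence the nef class $-K_X$, being strictly positive on a curve, is ample and $X$ is Fano with pseudo-index $\ge n$. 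If the minimal anticanonical degree is $\ge n+1$, the Cho-Miyaoka-Shepherd-Barron characterization gives $X\cong\bbP^n$; if it equals $n$, the splitting type from Step 2 identifies the variety of minimal rational tangents at a general point as a smooth quadric hypersurface, and the Dedieu-Hoering criterion yields $X\cong\bbQ^n$.

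\emph{Main obstacle.} The crux is Step 3: converting the a priori weak strict nefness into the genuine ampleness of $-K_X$, since strictly nef is strictly weaker than ample and the Campana-Peternell circle of problems shows this implication is subtle in general. Steps 1 and 2 are essentially formal, but the Mori-theoretic length estimates that pin down the Picard number, and the verification in the borderline degree-$n$ case that the variety of minimal rational tangents is a smooth quadric, are where the real work lies.
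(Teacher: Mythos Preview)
The paper is a survey and does not itself prove this theorem; it is quoted from \cite{LiOuYang2019} with only a pointer to the analogous results of Cho--Miyaoka--Shepherd-Barron and Dedieu--H\"oring. So there is no in-text proof to compare against. That said, your outline is along the lines one expects, and Steps~1 and~2 are correct: the determinant computation plus Proposition~\ref{flat} gives uniruledness, and the splitting-type analysis on $\bbP^1$ yields the pseudo-index bound $-K_X\cdot C\ge n$ together with the forced type $(2,1^{\,n-2},0)$ on a minimal free curve.

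The genuine gap is in Step~3, and it is slightly sharper than you flag. Your length argument correctly rules out birational extremal contractions and shows that any $K_X$-negative fibre-type contraction goes to a curve; and two distinct such fibrations are indeed incompatible (a curve in a fibre of the product map would lie in the intersection of two distinct extremal rays). But this only shows there is at most one $K_X$-negative extremal ray. It does \emph{not} force $\rho(X)=1$: the cone theorem decomposes $\overline{\mathrm{NE}}(X)$ as the $K_X\ge 0$ part plus the negative rays, and with $-K_X$ merely nef the boundary face $\{K_X=0\}$ may well be nontrivial. Concretely, nothing you have said excludes $\rho(X)=2$ with one Mori fibre space $X\to C$ onto a curve and a second extremal ray on which $K_X$ vanishes. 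Concluding ``$\rho(X)=1$, whence $-K_X$ ample'' from your argument is circular: you are implicitly assuming every extremal ray is $K_X$-negative, which is exactly ampleness of $-K_X$.

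Two remarks on how this is actually closed. First, one should upgrade ``uniruled'' to ``rationally connected'': this is Theorem~\ref{LOYrationallyconnected} applied with $r=2$, and it forces the base curve of any such fibration to be $\bbP^1$. Second, one then has to analyse the putative case $\rho(X)=2$ directly: the general fibre is $\bbP^{n-1}$ by your pseudo-index bound and CMSB, so $X\to\bbP^1$ is a projective bundle, and one checks by hand that $\bigwedge^2 T_X$ cannot be strictly nef on a $\bbP^{n-1}$-bundle over $\bbP^1$ (a section corresponding to a minimal quotient of the underlying bundle produces a curve violating the degree bound). Only after this does one land in the Fano, Picard-rank-one situation where Dedieu--H\"oring applies. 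Your closing paragraph correctly names this as the crux, but the sketch you give does not yet bridge it.
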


\noindent Gachet studied in \cite{Gachet2019}  the case when $\Lambda^3 T_X$ is strictly nef.  We have established that the tangent bundle    $T_X$ is
strictly nef if and only if  it is ample.  However, it is not valid for cotangent bundles.
\begin{example}\label{cotangentbundle} Let $X$ be a bidisk quotient, $\Delta\times\Delta/\Gamma$, with $\Gamma$ an irreducible torsion-free cocompact lattice. Let $E=T_X^*$ and $L$ be its tautological line bundle.   It is proved in \cite{ShepherdBarron1995} that $L$ is strictly nef and big, but it is not semi-ample.
\end{example}

\noindent  We propose the following problem which is analogous to the class result of Kobayashi that projective manifolds with ample cotangent bundle are hyperbolic.

\begin{problem} Let $X$ be a projective manifold. If $T_X^*$ is strictly nef,  is $X$ hyperbolic?
\end{problem}

  Let's consider manifolds with strictly nef canonical or anti-canonical bundles by recalling the Conjecture \ref{CampanaPeternellConjecture1} of Campana and Peternell.

\begin{conj} \label{CampanaPeternellConjecture} Let $X$ be a    projective
    manifold. If $K_X^{-1}$ is strictly nef, then $X$
    is  Fano.
\end{conj}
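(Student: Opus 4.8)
The plan is to reduce the statement to the semi-ampleness of $K_X^{-1}$ and then to the base point free machinery. Since $K_X^{-1}$ is strictly nef it is in particular nef, so by Lemma \ref{strictlynefandsemiample} it suffices to prove that $K_X^{-1}$ is \emph{semi-ample}; equivalently, once $K_X^{-1}$ is shown to be \emph{big}, the Kawamata--Reid--Shokurov base point free theorem produces a semi-ample multiple and Lemma \ref{strictlynefandsemiample} upgrades it to ampleness, so that $X$ is Fano. By Theorem \ref{LOYrationallyconnected1} we already know that $X$ is rationally connected, which is the expected output of the Fano conclusion and supplies the geometric backbone (an abundance of rational curves) for the argument. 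Thus the whole problem collapses to the single numerical statement $(K_X^{-1})^{\dim X}>0$.

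To attack bigness I would argue by contradiction on the numerical dimension $\nu=\nu(X,K_X^{-1})$. Since $K_X^{-1}$ is nef, $0\le \nu\le \dim X$, and we wish to exclude $\nu<\dim X$. Granting the generalized abundance conjecture (or its available cases, e.g. \cite{LazicPeternell2020,LazicPeternell2020a}), a nef anticanonical class becomes semi-ample, defining a fibration $\varphi\colon X\rightarrow Z$ with $\dim Z=\nu$ on whose fibers $K_X^{-1}$ is numerically trivial. If $\nu<\dim X$ the fibers are positive-dimensional, so they contain irreducible curves $C$ with $K_X^{-1}\cdot C=0$, directly contradicting strict nefness. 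Hence $\nu=\dim X$, $K_X^{-1}$ is big, and the reduction of the previous paragraph concludes. This is exactly the mechanism that makes Lemma \ref{strictlynefandsemiample} work, now applied to the contraction $\varphi$ rather than to a global linear system.

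An unconditional, purely Mori-theoretic variant would instead try to show that $\NEX$ is rational polyhedral. Because $K_X$ is strictly anti-nef, \emph{every} curve class is $K_X$-negative, so the Cone Theorem expresses $\NEX$ as the sum of its $K_X$-nonnegative part $\NEX\cap\{K_X\ge 0\}$ and the rays spanned by $K_X$-negative rational curves, these rays being locally finite away from the hyperplane $\{K_X=0\}$; strict nefness forces $K_X^{-1}$ to be strictly positive on each such extremal generator. If one could rule out nontrivial classes in the $K_X$-nonnegative part, that is, show $\NEX\cap\{K_X\ge 0\}=\{0\}$, then the cone would be rational polyhedral, $K_X^{-1}$ would be strictly positive on $\NEX\setminus\{0\}$, and Kleiman's criterion would give ampleness at once. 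This is precisely how the cases $\dim X\le 3$ are handled, where finiteness of the extremal rays and boundedness of the fibers of extremal contractions are available (\cite{Maeda1993,Serrano1995}).

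The main obstacle, and the reason the conjecture remains open in general, is the gap between the two notions of positivity: strict nefness is a statement about \emph{every} irreducible curve, whereas both the abundance-type fibration and the nef reduction map only control \emph{very general} fibers, hence only generic curves. Closing this gap, either by proving that a nef $K_X^{-1}$ on a rationally connected manifold is semi-ample, or by excluding the boundary classes of $\NEX$ on which $K_X^{-1}$ could vanish without being detected by an honest irreducible curve, is the crux. Proposition \ref{flat} shows that strict nefness does propagate some positivity when $-K_X$ is pseudo-effective, which suggests that a refined analysis of the extremal contraction $\varphi$, combined with the rational connectedness of $X$ and of the fibers of $\varphi$, is the most promising route to removing the dependence on abundance.
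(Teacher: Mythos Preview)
The statement you are attempting to prove is labeled and presented in the paper as a \emph{conjecture} (Conjecture~\ref{CampanaPeternellConjecture}, originally due to Campana--Peternell), not as a theorem; the paper offers no proof, only the partial result Theorem~\ref{LOYrationallyconnected} that $X$ is rationally connected, together with the remark that the full statement is known only for $\dim X\le 3$. There is therefore nothing in the paper to compare your proposal against.

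As for the proposal itself: it is not a proof but a survey of the known reductions, and you are candid about this in your final paragraph. Your first argument is conditional on the generalized abundance conjecture (or at least on semi-ampleness of nef anticanonical divisors), which is open; your second, Mori-theoretic argument correctly identifies that the issue is controlling $\NEX\cap\{K_X\ge 0\}$, but ``ruling out nontrivial classes'' there is exactly the unresolved step in dimension $\ge 4$. So the genuine gap is the one you already name: passing from strict nefness (positivity on every irreducible curve) to bigness or semi-ampleness of $-K_X$ without invoking an unproven abundance-type statement. Nothing in your write-up closes that gap, and the paper does not claim to either.
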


\noindent  Recently,  some evidences  are  established in \cite[Theorem~1.2]{LiOuYang2019}.

\begin{thm}\label{LOYrationallyconnected} Let $X$ be a projective manifold.  If $\Lambda^rT_X$ is strictly nef, then $X$ is rationally connected. In particular, if $K^{-1}_X$ is strictly nef, then $X$ is rationally connected.
    \end{thm}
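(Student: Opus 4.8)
The plan is to reduce the general statement to the special case where the anticanonical bundle itself is strictly nef, and then to analyse the maximal rationally connected (MRC) fibration. First I would show that strict nefness of $\Lambda^r T_X$ already forces $-K_X=\Lambda^n T_X$ to be strictly nef, where $n=\dim X$. By Proposition \ref{cri0} it is enough to check, for every finite morphism $\nu\colon C\to X$ from a smooth projective curve, that $\deg\nu^*(-K_X)=\deg\nu^*T_X>0$. Writing $\mu_1\geq\cdots\geq\mu_n$ for the Harder--Narasimhan slopes of $\nu^*T_X$ (each repeated according to the rank of the corresponding factor), the minimal degree of a quotient line bundle of $\Lambda^r(\nu^*T_X)=\nu^*\Lambda^r T_X$ equals the sum $\mu_{n-r+1}+\cdots+\mu_n$ of the $r$ smallest slopes; by Proposition \ref{cri0} this sum is strictly positive. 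A short convexity argument then finishes: since $\mu_{n-r+1}$ is the largest of the $r$ smallest slopes, positivity of their sum forces $\mu_{n-r+1}>0$, hence $\mu_1,\dots,\mu_{n-r}>0$ as well, and therefore
\begin{equation}
\deg\nu^*T_X=(\mu_1+\cdots+\mu_{n-r})+(\mu_{n-r+1}+\cdots+\mu_n)>0 .
\end{equation}
Thus it suffices to prove the ``in particular'' statement: if $-K_X$ is strictly nef, then $X$ is rationally connected.

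Assume now that $-K_X$ is strictly nef, so in particular nef. Since a strictly nef class cannot be numerically trivial, $K_X$ is not pseudo-effective (otherwise $-K_X$ nef together with $K_X$ pseudo-effective would give $K_X\equiv 0$). By the Boucksom--Demailly--P\u{a}un--Peternell theorem, $X$ is therefore uniruled. Let $\pi\colon X\dashrightarrow Y$ be the MRC fibration; its general fibre is rationally connected, and by Graber--Harris--Starr (together with the maximality of $\pi$) the base $Y$ is not uniruled, so that $K_Y$ is pseudo-effective. The whole problem is now reduced to proving $\dim Y=0$.

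Here I would invoke the structure theory of projective manifolds with nef anticanonical bundle of Cao--H\"oring \cite{CaoHoering2019}: for $-K_X$ nef the MRC fibration is an everywhere-defined morphism, it is locally constant, and the base $Y$ satisfies $K_Y\equiv 0$. Granting this, suppose $\dim Y\geq 1$. Using the locally constant structure of $\pi$ one produces a curve $C\subset X$ which is a horizontal multisection over a curve in $Y$ and along which the relative canonical bundle is numerically trivial, i.e. $K_{X/Y}\cdot C=0$. Combining this with $K_Y\equiv 0$ gives
\begin{equation}
-K_X\cdot C=-K_{X/Y}\cdot C+\pi^*(-K_Y)\cdot C=0,
\end{equation}
which contradicts the strict nefness of $-K_X$. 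Hence $\dim Y=0$ and $X$ is rationally connected; by the first paragraph this proves the theorem for every $r$. (If one prefers to avoid citing the full structure theorem, one can first pass to a finite \'etale cover---strict nefness of $-K_X$ being preserved by Proposition \ref{basicproperties}(5)---so as to reduce to a situation where the base carries a positive-dimensional abelian factor, and then run the same degree-zero-curve argument there.)

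The main obstacle is this last step: converting non-triviality of the MRC base into a curve on which $-K_X$ has degree zero. This is precisely where the delicate input is required, because a priori the relative anticanonical bundle $-K_{X/Y}$ carries positivity coming from the rationally connected fibres, and one must exploit the rigidity of the fibration (the Cao--H\"oring local constancy, equivalently $K_Y\equiv 0$) to see that this positivity contributes nothing along a suitable horizontal curve. By contrast, the reduction in the first paragraph is comparatively routine, resting only on the elementary behaviour of Harder--Narasimhan slopes under exterior powers together with the Barton--Kleiman-type criterion of Proposition \ref{cri0}.
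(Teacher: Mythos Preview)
The paper is a survey and does not include a proof of this theorem; it is quoted from \cite{LiOuYang2019} without argument, so there is no proof here to compare against.

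That said, your reduction in the first paragraph contains a genuine error. You assert that the minimal degree of a line bundle quotient of $\nu^*\Lambda^r T_X$ \emph{equals} $\mu_{n-r+1}+\cdots+\mu_n$, the sum of the $r$ smallest Harder--Narasimhan slopes of $\nu^*T_X$. In general one only has the inequality $\deg L\geq\mu_{n-r+1}+\cdots+\mu_n$ for every line bundle quotient $L$; the bottom Harder--Narasimhan factor is semistable of that slope, but a semistable bundle of slope $\mu$ need not admit a line bundle quotient of degree exactly $\mu$. Mumford's bundle (Example~\ref{Mumfordexample}) is precisely such a case: it is stable of degree zero, so $\mu_1=\mu_2=0$, yet it is strictly nef, so every line bundle quotient has strictly positive degree. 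Consequently, strict nefness of $\Lambda^r T_X$ via Proposition~\ref{cri0} yields only $\mu_{n-r+1}+\cdots+\mu_n\geq 0$, and your convexity step then gives merely that $-K_X$ is nef, not strictly nef. It is in fact not clear that the implication ``$\Lambda^r T_X$ strictly nef $\Rightarrow$ $-K_X$ strictly nef'' holds at all; the argument in \cite{LiOuYang2019} does not pass through this reduction but works with the hypothesis on $\Lambda^r T_X$ directly.

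For the special case $r=n$, your route through the Cao--H\"oring structure theorem is reasonable, but the decisive step---producing a horizontal curve $C$ with $K_{X/Y}\cdot C=0$ from local constancy of the fibration---is asserted rather than justified. When the base (after a finite \'etale cover) is simply connected the fibration becomes an honest product and the curve is obvious; when an abelian factor with nontrivial monodromy in $\Aut(F)$ survives, one still has to explain why some multisection carries no contribution from $K_{X/Y}$. This can be arranged, but it is not as immediate as your sketch suggests, and your parenthetical alternative (``reduce to an abelian factor'') does not by itself dispose of the monodromy.
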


\noindent Let $f:X\rightarrow Y$ be a smooth morphism between two projective manifolds. It is well-known that if $K^{-1}_X$ is ample, then so is $K^{-1}_Y$ (\cite{KollarMiyaokaMori1992}, see also \cite{BirkarChen2016} for semi-ampleness).  It is natural to propose the following conjecture.
\begin{conj}\label{directimage} Let $f:X\rightarrow Y$ be a smooth morphism between two projective manifolds. If $K^{-1}_X$ is strictly nef, then so is $K^{-1}_Y$.
\end{conj}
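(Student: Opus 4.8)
The plan is to verify the defining inequality $K_Y^{-1}\cdot C>0$ for every irreducible curve $C\subset Y$, after pulling the whole picture back to that curve. First I would let $\nu\colon \tilde C\to C\hookrightarrow Y$ be the normalization and form the fibre product $g\colon W=X\times_Y\tilde C\to \tilde C$, with second projection $h\colon W\to X$. Since $f$ is smooth, $g$ is a smooth morphism and $W$ is a smooth projective variety, while $h$ is finite because it is the base change of the finite map $\nu$. By Proposition \ref{basicproperties}(5) the bundle $A\defeq h^*K_X^{-1}$ is again strictly nef. Base change of the relative canonical bundle of a smooth morphism gives $h^*K_{X/Y}=K_{W/\tilde C}$, so the adjunction identity $K_X^{-1}=K_{X/Y}^{-1}\otimes f^*K_Y^{-1}$ pulls back to
\[
A \;=\; K_{W/\tilde C}^{-1}\otimes g^*N,\qquad N\defeq \nu^*K_Y^{-1},
\]
and since $\nu$ is birational onto $C$ we have $\deg N=K_Y^{-1}\cdot C$. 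Thus the theorem is equivalent to the inequality $\deg N>0$ for every such $C$.

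Next I would extract the geometric consequences already available. Restricting $A$ to a fibre $X_y$ gives $K_{X_y}^{-1}$, which is strictly nef, so by Theorem \ref{LOYrationallyconnected} every fibre of $f$ is rationally connected; applying the same theorem to $X$ and using that $f$ is surjective shows that $Y$ itself is rationally connected. In particular (replacing $Y$ by its Stein factorization so that the fibres are connected) the fibration $g$ admits a section $\sigma\colon \tilde C\to W$ by the theorem of Graber--Harris--Starr. Writing $B_0=\sigma(\tilde C)$ and using that $\sigma^*T_{W/\tilde C}$ is the normal bundle of the section, the identity above restricts to
\[
\deg N \;=\; A\cdot B_0 \,-\, \deg N_{B_0/W}.
\]
Because $A$ is strictly nef the first term is positive, so the task becomes to produce a horizontal curve --- a section or a multisection $B$ with $g|_B$ finite --- for which the relative anticanonical degree $K_{W/\tilde C}^{-1}\cdot B$ is controlled from above by $A\cdot B$. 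I would try to achieve this by choosing $B$ of minimal $A$-degree among horizontal curves and deforming it: if such a curve moves enough, bend-and-break should split off components and force the estimate, exactly as in the Koll\'ar--Miyaoka--Mori proof that a smooth morphism with Fano total space has Fano base.

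The step I expect to be the genuine obstacle is precisely this last one, and it is where the gap between strict nefness and ampleness bites. For ampleness one only needs positivity on the extremal rational curves supplied by the cone theorem, and on any such ray $K_Y^{-1}$ is automatically positive; the curves that could violate strict nefness are exactly those with $K_Y^{-1}\cdot C\le 0$, and these need not be rational, so no cone-theoretic reduction is available. Pulling such a curve back, the hypothesis $\deg N\le 0$ forces $K_{W/\tilde C}^{-1}=A\otimes g^*N^{-1}$ to be strictly nef (a numerically trivial twist of $A$ if $\deg N=0$, a nef twist if $\deg N<0$), so the whole conjecture reduces to showing that a smooth fibration over a curve with rationally connected fibres cannot have strictly nef relative anticanonical bundle while $\deg N\le 0$. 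Here bend-and-break on a higher-genus section only manufactures \emph{vertical} rational curves, which constrain the fibres but say nothing about $N$; and the positivity-of-direct-images mechanism used in the nef-and-big case is unavailable, both because that argument needs bigness and because rationally connected fibres carry no holomorphic forms, so the variation of Hodge structure --- the usual source of positivity on the base --- is trivial. I therefore expect that closing this case requires genuinely new positivity input: either first establishing the Campana--Peternell Conjecture \ref{CampanaPeternellConjecture} so that the fibres become Fano and $K_{W/\tilde C}^{-1}$ becomes relatively semiample (allowing Lemma \ref{strictlynefandsemiample} and Mourougane-type positivity of direct images to be brought to bear), or invoking the positivity of the moduli line bundle attached to a family of K-stable Fano fibres.
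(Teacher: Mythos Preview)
The statement you are attempting to prove is labeled a \emph{conjecture} in the paper, not a theorem; the paper does not supply a proof. It only remarks that Conjecture~\ref{directimage} follows from the Campana--Peternell Conjecture~\ref{CampanaPeternellConjecture}, and records as a partial result the corollary that $Y$ is rationally connected (via Theorem~\ref{LOYrationallyconnected}). Your proposal is therefore not competing with any argument in the paper.

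That said, your analysis is accurate and lands in the same place the paper does. The base-change setup is correct (the map $h$ is indeed finite, so Proposition~\ref{basicproperties}(5) applies), and your reduction to showing that a smooth family over a curve with rationally connected fibres cannot have strictly nef $K_{W/\tilde C}^{-1}$ while $\deg N\le 0$ is the natural one. You then correctly diagnose why the Koll\'ar--Miyaoka--Mori argument for the ample case does not transfer: bend-and-break on high-genus sections produces only vertical rational curves, and the cone-theoretic and direct-image inputs both require ampleness or bigness that strict nefness alone does not supply. Your conclusion---that one essentially needs Conjecture~\ref{CampanaPeternellConjecture} (or a K-stability substitute) to proceed---matches the paper's own assessment. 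In short, there is no gap in your reasoning beyond the one you explicitly flag, and that gap is the conjecture itself.
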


\noindent  Indeed, this conjecture can be regarded as a consequence of Conjecture \ref{CampanaPeternellConjecture}.  Thanks  to Theorem \ref{LOYrationallyconnected}, one obtains   a partial answer to Conjecture \ref{directimage}.

\begin{cor}Let $f:X\rightarrow Y$ be a smooth morphism between two projective manifolds. If $K^{-1}_X$ is strictly nef, then $Y$ is rationally connected.
\end{cor}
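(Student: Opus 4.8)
The plan is to reduce the statement entirely to Theorem~\ref{LOYrationallyconnected} together with the elementary principle that rational connectedness descends along surjective morphisms. The essential input is already available: since $K_X^{-1}$ is strictly nef, Theorem~\ref{LOYrationallyconnected} applies verbatim to $X$ and tells us that $X$ itself is rationally connected. So the only remaining work is to transfer this property from $X$ to its image $Y$, and I would organize this in two short steps.

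First I would verify that $f$ is surjective. Being a smooth morphism, $f$ is an open map, so $f(X)$ is open in $Y$; on the other hand $X$ is projective, hence proper over the ground field, so $f$ is proper and $f(X)$ is closed. A nonempty subset of the connected manifold $Y$ that is simultaneously open and closed must be all of $Y$, whence $f(X)=Y$.

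Next I would deduce rational connectedness of $Y$ from that of $X$. I would fix two general points $y_1,y_2\in Y$ and, using surjectivity, choose preimages $x_1,x_2\in X$. Since $X$ is rationally connected, $x_1$ and $x_2$ can be joined by a connected chain of rational curves in $X$; its image under $f$ is a connected chain of rational curves and points in $Y$ linking $y_1$ and $y_2$. Because $y_1\neq y_2$, not every component can collapse, so the image genuinely connects $y_1$ to $y_2$ by rational curves. Thus general pairs of points of $Y$ are joined by chains of rational curves, which is exactly rational connectedness of $Y$.

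The entire substance of the argument sits in Theorem~\ref{LOYrationallyconnected}; everything after it is formal. If there is any delicate point at all, it is merely the verification that $f$ is surjective (which rests on properness of $X$ and connectedness of $Y$) and the standard observation that a morphism sends a rational curve to a rational curve unless it contracts it to a point. I therefore expect no real obstacle: the corollary is a direct packaging of the already-established rational connectedness of $X$ with the stability of this property under surjections.
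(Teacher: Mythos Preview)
Your proposal is correct and follows exactly the route the paper intends: the corollary is stated immediately after ``Thanks to Theorem~\ref{LOYrationallyconnected}'' with no further argument, so the paper's own proof is precisely the one you spell out---apply Theorem~\ref{LOYrationallyconnected} to conclude $X$ is rationally connected, then use that rational connectedness descends along the (surjective) morphism $f$.
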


\begin{example} Let $f:X\rightarrow Y$ be a smooth morphism between two projective manifolds. It is well-known that $K^{-1}_{X/Y}$ can not be ample (\cite{KollarMiyaokaMori1992}). However, it can be strictly nef.
\end{example}

\noindent We also propose the following general conjecture concerning strictly nef bundles.

\begin{conj}Let $X$ be a projective manifold. \begin{enumerate} \item If $\Lambda^r T_X$ is strictly nef for some $r>0$, then $K^{-1}_X$ is ample;
        \item       If $\Lambda^r T^*_X$ is strictly nef  for some $r>0$, then $K_X$ is ample.

    \end{enumerate}
\end{conj}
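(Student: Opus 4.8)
The plan is to prove both statements by the same two-stage scheme: first extract from the hypothesis that $\pm K_X$ is \emph{nef}, then upgrade nefness to ampleness by combining a strict-nefness statement with a semi-ampleness input. For part~(1) I would begin with Theorem~\ref{LOYrationallyconnected}: since $\Lambda^r T_X$ is strictly nef, $X$ is rationally connected, so $K_X$ is not pseudoeffective and the only possible obstruction to $-K_X$ being ample is positivity rather than the presence of sections. Nefness of $-K_X$ is then immediate from the determinant: strict nefness forces nefness, hence $\det(\Lambda^r T_X)$ is nef, and since $c_1(\Lambda^r T_X)=\binom{n-1}{r-1}\,c_1(T_X)=\binom{n-1}{r-1}(-K_X)$ with $n=\dim X$, the class $-K_X$ is nef. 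The heart of the argument is to pass from ``$-K_X$ nef'' to ``$-K_X$ ample,'' which I would attempt in two steps: (a) show $-K_X$ is strictly nef, and (b) produce a semi-ampleness statement so that Lemma~\ref{strictlynefandsemiample} applies.

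For step (a) the cone theorem on the rationally connected manifold $X$ reduces the problem to the $K_X$-trivial boundary of $\NE{X}$: every $K_X$-negative extremal ray is spanned by a rational curve with $0<-K_X\cdot C\le n+1$, so it suffices to exclude irreducible curves $C$ with $-K_X\cdot C=0$. Pulling back to the normalization $\nu\colon\tilde C\to X$, such a curve would make $\nu^*\Lambda^r T_X=\Lambda^r\nu^*T_X$ a strictly nef bundle on $\tilde C$ whose determinant has degree zero; being nef with vanishing first Chern class, it would be numerically flat, and one would try to extract from this a degree-zero line-bundle quotient contradicting Proposition~\ref{cri0}. For step~(b), once $-K_X$ is known to be strictly nef and (ideally) big, the Kawamata--Reid--Shokurov base-point-free theorem yields semi-ampleness and Lemma~\ref{strictlynefandsemiample} gives ampleness.

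Part~(2) is handled dually. From $\Lambda^r T_X^*=\Lambda^r\Omega_X$ strictly nef one gets, exactly as above, that $c_1(\Lambda^r\Omega_X)=\binom{n-1}{r-1}K_X$ is nef, so $K_X$ is nef; the strong positivity of $\Omega_X$ should moreover force $X$ to be of general type, whence $K_X$ is nef and big. Granting abundance (equivalently, semi-ampleness of nef-and-big $K_X$ via the base-point-free theorem), it remains only to verify that $K_X$ is strictly nef in order to conclude by Lemma~\ref{strictlynefandsemiample}; as in part~(1) this reduces to excluding curves $C$ with $K_X\cdot C=0$ along which $\Lambda^r\Omega_X$ is numerically flat. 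As consistency checks, the case $r=1$ of part~(1) is already known, since $T_X$ strictly nef forces $X\cong\bbP^n$ and hence $-K_X$ ample, while the top case $r=n$ of the two parts is exactly the Campana--Peternell Conjecture~\ref{CampanaPeternellConjecture} and its still-open dual.

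The decisive obstacle is step~(a), that is, the strict-nefness of $\pm K_X$, and this is emphatically not formal. By Mumford's Example~\ref{Mumfordexample} and the higher-rank constructions of Subramanian in Theorem~\ref{Subramanian}, a numerically flat vector bundle can itself be strictly nef, so a curve $C$ with $\det(\Lambda^r T_X|_C)$ of degree zero is \emph{a priori} compatible with $\Lambda^r T_X|_C$ being strictly nef once $1<r<n$; Proposition~\ref{cri0} alone does not produce the desired degree-zero quotient. Overcoming this therefore requires feeding in the global geometry---rational connectedness in part~(1), general type in part~(2)---to show that such flat restrictions cannot occur, rather than arguing curve by curve. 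Since a successful implementation would in particular settle the Campana--Peternell Conjecture~\ref{CampanaPeternellConjecture} and subsume Serrano's Conjecture~\ref{Serranoconjecture1}, I would not expect an unconditional proof along these lines without substantial new input beyond the results quoted above.
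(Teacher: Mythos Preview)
The statement you are addressing is a \emph{conjecture} in the paper, not a theorem; the paper offers no proof, only the remark that the case $r=\dim X$ of part~(1) is Conjecture~\ref{CampanaPeternellConjecture} and that part~(2) is connected to the Kobayashi--Lang circle of ideas. There is therefore nothing to compare your argument against.

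Your write-up is not really a proof but a strategy sketch together with an honest diagnosis of why it stalls, and that diagnosis is accurate. The preliminary steps you state do go through: strict nefness of $\Lambda^r T_X$ gives nefness, hence nefness of $\det(\Lambda^r T_X)=(-K_X)^{\binom{n-1}{r-1}}$, so $-K_X$ is nef; Theorem~\ref{LOYrationallyconnected} gives rational connectedness in part~(1); and the extremal cases $r=1$ and $r=n$ are correctly identified. The essential obstruction you isolate---that on a curve $C$ with $K_X\cdot C=0$ the restriction $\Lambda^r T_X|_C$ is numerically flat but can, by the Mumford/Subramanian examples, still be strictly nef---is exactly why no curve-by-curve argument is available and why the conjecture is open. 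One small caution: in part~(2) you assert that strict nefness of $\Lambda^r\Omega_X$ ``should moreover force $X$ to be of general type''; this is itself not established anywhere in the paper and is part of what the conjecture is asking, so it should be flagged as an assumption rather than an input.

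In short: there is no gap to name beyond the one you already name yourself, and the paper does not claim to do better.
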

\noindent When $T^*_X$ is strictly nef, it is of particular interest  and it is also related to the Kobayashi-Lang conjecture on hyperbolicity.\\

Let's consider the geometry of projective manifolds whose tangent bundle contains a "positive" subsheaf. Recall that,  Andreatta and Wi\'sniewski obtained in \cite[Theorem]{AndreattaWisniewski2001}
the following characterization of projective spaces.
\begin{thm}
    \label{thm:Andreatta-Wisniewski}
    Let $X$ be an $n$-dimensional   projective manifold.
    Assume that the tangent bundle $T_X$ contains a locally free ample subsheaf $\sF$ of rank $r$. Then $X\cong  \bbP^n$ and either $\sF\cong  T_{\bbP^n}$  or $\sF\cong \sO_{\bbP^n}(1)^{\oplus r}$.
\end{thm}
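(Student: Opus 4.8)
The plan is to obtain the statement as a consequence of the projective-bundle structure theorem already available for strictly nef subsheaves. An ample bundle is in particular strictly nef (its tautological line bundle is ample, hence strictly nef), so the hypothesis places us exactly in the setting of Theorem \ref{thm:main-theorem1}: as $\sF\subseteq T_X$ is locally free, strictly nef and of rank $r$, we obtain a $\bbP^d$-bundle structure $\varphi\colon X\rightarrow T$ with $d\geq r$, where $T$ is either a single point or a hyperbolic projective manifold of general type. The whole argument then reduces to upgrading this under the stronger ampleness hypothesis: I would show that the base $T$ must be a point, whence $X\cong\bbP^d=\bbP^n$, and then identify $\sF$ inside $T_{\bbP^n}$.

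The first step is to show that $\sF$ is \emph{vertical}, i.e. $\sF\subseteq T_{X/T}$. Let $F\cong\bbP^d$ be a general fibre of $\varphi$. Since the fibres move in a family parametrized by $T$, the normal bundle is trivial, so the sequence $0\rightarrow T_F\rightarrow T_X|_F\rightarrow N_{F/X}\rightarrow 0$ holds with $N_{F/X}\cong\sO_F^{\oplus\dim T}$. The restriction $\sF|_F$ is ample, hence by the vanishing $H^0(F,(\sF|_F)^*)=0$ valid for any ample bundle (the analogue, for ample bundles, of the vanishing theorem for strictly nef bundles proved above), the composite $\sF|_F\rightarrow T_X|_F\rightarrow N_{F/X}$ is zero, so $\sF|_F\subseteq T_F$. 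As this holds for the general fibre, the sheaf map $\sF\rightarrow\varphi^*T_T$ vanishes on a dense open set and hence identically (the target being locally free), giving $\sF\subseteq T_{X/T}$ globally.

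The crucial step, and the one I expect to be the main obstacle, is to rule out $\dim T>0$; this is precisely where ampleness is strictly stronger than strict nefness. Suppose $\dim T>0$. Since $T$ is hyperbolic, every irreducible curve $B\subseteq T$ has geometric genus $\geq 2$. Restricting the fibration over such a $B$ yields a $\bbP^d$-bundle $X_B=\varphi^{-1}(B)\rightarrow B$, and $\sF|_{X_B}\subseteq T_{X_B/B}=T_{X/T}|_{X_B}$ is still ample. I would now contradict this: the relative tangent bundle of a projective bundle over a curve of genus $\geq 2$ contains no ample subsheaf. Concretely, choosing a section $C_0\subseteq X_B$ associated to a suitable sub-line-bundle $M\subseteq V$ (where $X_B=\bbP(V)$), the restriction $T_{X_B/B}|_{C_0}\cong M^*\otimes(V/M)$ is not positive enough to contain an ample bundle of rank $r$ along $C_0$; this is exactly the phenomenon displayed by Mumford's Example \ref{Mumfordexample}, where the relative tangent bundle is strictly nef but never ample. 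This contradiction forces $T$ to be a point, and therefore $X\cong\bbP^n$.

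It remains to identify $\sF$ as a subsheaf of $T_{\bbP^n}$. On every line $\ell\subseteq\bbP^n$ one has $T_{\bbP^n}|_\ell\cong\sO_\ell(2)\oplus\sO_\ell(1)^{\oplus(n-1)}$, and since $\sF|_\ell$ is ample of rank $r$ it splits as $\bigoplus_{i=1}^r\sO_\ell(a_i)$ with every $a_i\geq 1$, embedding into this positive bundle. If $r=n$ then $\sF$ has full rank and $\det\sF|_\ell$ has degree at most $n+1$ with equality forced on every line by ampleness, so the inclusion $\sF\hookrightarrow T_{\bbP^n}$ has trivial cokernel and $\sF\cong T_{\bbP^n}$. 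If $r<n$, I would first observe that the $\sO_\ell(2)$-direction is the tangent direction to $\ell$; were it contained in $\sF$ for the general line, the tangent directions of lines through a point would fill $T_x\bbP^n$ and force $\sF=T_{\bbP^n}$, contradicting $r<n$. Hence the generic splitting type is $\sO_\ell(1)^{\oplus r}$, so $c_1(\sF)=rH$ and $\sF(-1)$ is trivial on the general line; a standard uniformity argument on $\bbP^n$ then gives $\sF(-1)\cong\sO_{\bbP^n}^{\oplus r}$, i.e. $\sF\cong\sO_{\bbP^n}(1)^{\oplus r}$, completing the proof.
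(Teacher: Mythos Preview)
The paper does not give its own proof of Theorem~\ref{thm:Andreatta-Wisniewski}: it is quoted verbatim as the main theorem of \cite{AndreattaWisniewski2001}, and the paper's contribution (Theorem~\ref{thm:main-theorem}) is presented as an \emph{extension} of it, not a reproof. So there is no in-paper argument to compare your proposal against.

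On the substance of your proposal, two issues deserve comment. First, deducing the ample case from the strictly nef structure theorem risks circularity: you should check whether the proof of Theorem~\ref{thm:main-theorem1} in \cite{LiuOuYang2020} itself invokes \cite{AndreattaWisniewski2001} (or equivalent input) when establishing that the fibres of $\varphi$ are projective spaces. Second, and more concretely, the step where you rule out $\dim T>0$ is a genuine gap. You restrict to $X_B=\bbP(V)\to B$ over a curve of genus $\geq 2$, pick a section $C_0$ attached to a ``suitable'' sub-line-bundle $M\subset V$, and assert that $T_{X_B/B}|_{C_0}\cong M^*\otimes(V/M)$ is not positive enough. But you neither specify how to choose $M$ nor show that a bad $M$ always exists. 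Mumford's Example~\ref{Mumfordexample} only exhibits \emph{particular} flat bundles $V$ for which $T_{\bbP(V)/B}$ is strictly nef yet not ample; it is not a universal obstruction for arbitrary $V$. For instance, if $V=\sO_B\oplus L$ with $\deg L>0$, the section coming from $M=\sO_B$ gives $T_{X_B/B}|_{C_0}\cong L$, which \emph{is} ample; one must take the other section to see negativity, and in higher rank the existence of such a destabilizing section for every $V$ needs an argument you have not supplied. A cleaner route, if you want to stay within this paper's framework, is to use the finer dichotomy recorded after Theorem~\ref{thm:main-theorem} (from \cite[Theorem~8.1]{LiuOuYang2020}): either $\sF\cong T_{X/T}$ with $X\to T$ a \emph{flat} projective bundle, or $\sF$ is numerically projectively flat; in either case an ample $\sF$ is quickly seen to force $\dim T=0$. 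The original proof in \cite{AndreattaWisniewski2001} proceeds differently, via Mori theory and minimal rational curves, without passing through any strictly nef structure theorem.
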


\noindent  According to Example \ref{Mumfordexample},    Theorem \ref{thm:Andreatta-Wisniewski}
does not hold  if the subsheaf $\sF$ is  assumed to be strictly nef. Indeed, we  obtained in \cite[Theorem~1.3]{LiuOuYang2020} the following structure theorem for projective
manifolds whose tangent bundle contains a strictly nef subsheaf.

\begin{thm}
    \label{thm:main-theorem}
    Let $X$ be a  projective manifold. Assume that the tangent bundle $T_X$ contains a locally free strictly nef subsheaf $\sF$ of rank $r$.
    Then $X$ admits a $\bbP^d$-bundle structure $\varphi: X\rightarrow T$ for some $d\geq r$.
    Moreover,
    $T$ is a hyperbolic projective manifold.
\end{thm}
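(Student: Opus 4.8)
The aim is to produce a $\bbP^d$-bundle structure on $X$ and to establish hyperbolicity of the base $T$. The natural strategy is to run the Minimal Model Program (MMP) / Mori theory. First I would observe that $\sF$ being strictly nef implies that $T_X$ itself carries positivity along many curves; in particular, for any rational curve $C\subset X$ with $f\colon \bbP^1\to X$ its normalization, the pullback $f^*\sF$ is a strictly nef bundle on $\bbP^1$, hence ample by the remark (made earlier in the excerpt) that strictly nef bundles on $\bbP^1$ are ample. This forces $\sF\cdot C = \deg f^*\sF > 0$, so rational curves meet $\sF$ positively, and combined with $\sF\hookrightarrow T_X$ this should give a lower bound on the anticanonical degree $-K_X\cdot C$ of free rational curves that is governed by $r = \rk\sF$.

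\textbf{Key steps.} Concretely, I would proceed as follows. (1) Use Theorem~\ref{LOYrationallyconnected} (or the rational connectedness results cited) applied to the rationally connected fibration, together with the bend-and-break machinery, to produce a covering family of minimal rational curves $\cM$ on $X$ whose members $C$ satisfy $\sF\cdot C \geq r$; the point is that $f^*\sF$ being an ample subbundle of $f^*T_X=\sO(2)\oplus\sO(1)^{\oplus p}\oplus\sO^{\oplus q}$ forces $f^*\sF$ to sit inside the positive part, pinning down $-K_X\cdot C$ and the splitting type. (2) Show that the associated Mori contraction $\varphi\colon X\to T$ defined by this family is a $\bbP^d$-bundle (a smooth morphism with fibers $\bbP^d$): here the ampleness of $f^*\sF$ on the curves tangent to the fibers, via the Araujo--Druel--Kov\'acs / Andreatta--Wi\'sniewski circle of ideas (Theorem~\ref{thm:Andreatta-Wisniewski}), identifies the fibers as projective spaces and $d\geq r$. (3) Finally, establish hyperbolicity of $T$: the strict nefness of $\sF$ prevents any nonconstant map $\bbC\to X$ from existing transverse to the fibers, so any entire curve in $X$ must be vertical and map to a point in $T$; dually, one rules out rational or elliptic curves in $T$ using Proposition~\ref{basicproperties}(6) to pull back to $X$ and contradict strict nefness of $\sF$ on the resulting vertical-plus-horizontal configuration.

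\textbf{Main obstacle.} The hardest part will be step (2), proving that the contraction is an honest $\bbP^d$-bundle rather than merely a fibration with generically $\bbP^d$ fibers. This requires controlling the positivity of $\sF$ on \emph{every} fiber, including possibly singular or jumping ones, and ruling out the degenerations that bend-and-break usually permits; the strictly nef (as opposed to ample) hypothesis is exactly what makes the standard Andreatta--Wi\'sniewski argument fail to apply verbatim (cf. the fact that Theorem~\ref{thm:Andreatta-Wisniewski} itself is \emph{false} for strictly nef $\sF$, as Example~\ref{Mumfordexample} shows). The resolution should come from a careful analysis of the relative tangent sheaf $T_{X/T}$ and the interaction of $\sF$ with it: one shows $\sF$ is contained in $T_{X/T}$ up to a controlled twist, using that horizontal directions would produce curves violating strict nefness. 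The hyperbolicity in step (3) is then a consequence of the general-type / base structure, but making "hyperbolic" precise (Brody hyperbolicity of $T$, or the stronger statement in Theorem~\ref{thm:main-theorem1} that $T$ is of general type) will rely on the deformation theory of rational curves degenerating in the total space and is where I expect the most delicate verification.
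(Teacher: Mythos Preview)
The paper does not contain a proof of this theorem. This is a survey article; the statement is quoted from \cite{LiuOuYang2020} (announced as ``we obtained in \cite[Theorem~1.3]{LiuOuYang2020}''), and no argument is reproduced here. So there is no in-paper proof against which to compare your proposal.

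That said, the paper does record, immediately after the statement, a structural dichotomy proved in \cite[Theorem~8.1]{LiuOuYang2020}: either $\sF\cong T_{X/T}$ and $X$ is a \emph{flat} projective bundle over $T$, or $\sF$ is \emph{numerically projectively flat} with fiberwise restriction $\sO_{\bbP^d}(1)^{\oplus r}$. Your outline does not arrive at (or anticipate) this dichotomy, and the phrase ``numerically projectively flat'' suggests that the actual argument passes through a projective-flatness criterion for $\sF$ rather than purely through bend-and-break plus Andreatta--Wi\'sniewski. This is consistent with your own diagnosis that step~(2) is where the strictly-nef-but-not-ample hypothesis breaks the classical argument.

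A few concrete gaps in the proposal as written. In step~(1) you invoke Theorem~\ref{LOYrationallyconnected}, but that result assumes $\Lambda^r T_X$ is strictly nef, which is not implied by $T_X$ containing a strictly nef subsheaf of rank $r$ (indeed $\det\sF$ need not even be strictly nef, cf.\ Example~\ref{Mumfordexample}); you need an independent source of rational curves. In step~(3), ruling out rational and elliptic curves in $T$ is weaker than Brody hyperbolicity in dimension $>1$; and your proposed mechanism (``pull back to $X$ and contradict strict nefness of $\sF$'') is unclear, since the pullback of a curve in $T$ is a divisor in $X$, not a curve, and you have not specified what curve in that divisor would violate strict nefness. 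The paper's companion statement (Theorem~\ref{thm:main-theorem1}) asserts that $T$ is moreover of general type, which hints that the hyperbolicity is obtained via positivity of $\Omega_T$ (pushed down from $\sF$) rather than by directly excluding entire curves.
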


 \noindent  Actually,  we  obtained in \cite[Theorem~8.1]{LiuOuYang2020}  a  concrete description on the
 structure of the subsheaf $\sF$ and it is exactly one of the following:
 \begin{enumerate}
    \item $\sF\cong  T_{X/T}$ and $X$ is isomorphic to a flat projective bundle over
    $T$;
    \item $\sF$ is a numerically projectively flat vector bundle and its restriction  on every fiber of $\varphi$ is  isomorphic to  $\sO_{\bbP^d}(1)^{\oplus r}$.
 \end{enumerate}

 \noindent   When $\dim T=0$, we obtained in \cite[Theorem~1.4]{LiuOuYang2020} a new characterization of projective spaces.
\begin{cor}
    \label{simply-connected-Pn}  Let $X$ be an $n$-dimensional  projective manifold such that $T_X$ contains a locally free strictly nef subsheaf $\sF$.
    If $\pi_1(X)$ is virtually abelian, then $X$ is isomorphic to  $\bbP^n$, and $\sF$ is isomorphic to either $T_{\bbP^n}$ or $\sO_{\bbP^n}(1)^{\oplus r}$.
\end{cor}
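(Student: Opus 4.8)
The plan is to feed everything through the structure theorem (Theorem~\ref{thm:main-theorem}) and reduce the whole statement to the single assertion that the base $T$ is a point. By Theorem~\ref{thm:main-theorem} the hypothesis produces a $\bbP^d$-bundle structure $\varphi\colon X\to T$ with $T$ hyperbolic (and, as recorded in the Introduction, of general type whenever $\dim T>0$). Since the fibres $\bbP^d$ are simply connected, the homotopy exact sequence of the fibration gives $\pi_1(X)\cong\pi_1(T)$, so $\pi_1(T)$ is virtually abelian. Once $\dim T=0$ is known we get $X\cong\bbP^d=\bbP^n$, and the refined alternative in \cite[Theorem~8.1]{LiuOuYang2020}, read over a point, forces $\sF\cong T_{\bbP^n}$ in case~(1) and $\sF\cong\sO_{\bbP^n}(1)^{\oplus r}$ in case~(2); this is exactly the claimed dichotomy. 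Hence the entire content is the vanishing of $\dim T$.

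To prove $\dim T=0$ I would argue by contradiction, passing to a finite \'etale cover $T'\to T$ on which $\pi_1(T')$ is abelian and pulling $X$ back accordingly; strict nefness of $\sF$, hyperbolicity, and the bundle structure are all preserved. The heart of the matter is that abelian monodromy cannot sustain a strictly nef relative positivity. Concretely, I would split according to \cite[Theorem~8.1]{LiuOuYang2020}. In case~(1), $X'$ is a flat $\bbP^d$-bundle classified by a representation $\rho\colon\pi_1(T')\to\PGL_{d+1}(\bbC)$; as $\pi_1(T')$ is abelian, $\rho$ is reducible, so after a further finite cover the associated flat vector bundle splits into flat line bundles. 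This yields a flat section $s\colon T'\to X'$ along which $\sF=T_{X'/T'}$ restricts to a direct sum of flat (degree-zero) line bundles. In case~(2) the numerically projectively flat bundle $\sF$ has projective monodromy with abelian image, hence again reduces, producing a quotient line bundle of $\sF$ that is $\sO(1)$ along the fibres of $\varphi$ but flat along the base directions.

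In either case, pick any curve $B\subset T'$ (available since $\dim T'>0$) and restrict. I then obtain a nonzero quotient line bundle of $\sF$ whose degree on the corresponding curve in $X'$ is zero. But by Proposition~\ref{basicproperties}(2) every nonzero quotient of the strictly nef bundle $\sF$ is again strictly nef, so its degree on that curve would be strictly positive, contradicting that it is zero. Therefore $\dim T'=\dim T=0$, and the identification of $\sF$ in the first paragraph completes the proof.

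The step I expect to be the main obstacle is the reducibility analysis: converting ``$\pi_1(T)$ virtually abelian'' into a genuine degree-zero quotient of $\sF$ on an actual curve. This demands careful use of the flat, respectively numerically projectively flat, structure of \cite[Theorem~8.1]{LiuOuYang2020}, the right bookkeeping of the $\det$-twist in case~(2) so that the offending quotient really has degree exactly zero (not merely bounded degree) on some curve, and control of the finite covers so that neither strict nefness nor the bundle structure degenerates. It is worth emphasizing that mere hyperbolicity of $T$ does not force $\pi_1(T)$ to be non-virtually-abelian — simply connected hyperbolic hypersurfaces already defeat the naive statement — so the argument must genuinely exploit the positivity of $\sF$ through the monodromy, and this is the crux.
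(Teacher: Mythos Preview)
The paper does not supply its own proof of this corollary; it merely records the result and attributes it to \cite[Theorem~1.4]{LiuOuYang2020}. So there is no in-paper argument to compare against, and your proposal is effectively an attempt to reconstruct the cited proof. The overall architecture you describe---apply Theorem~\ref{thm:main-theorem}, transport the $\pi_1$ hypothesis to $T$ via the fibration, and use the dichotomy of \cite[Theorem~8.1]{LiuOuYang2020} to force $\dim T=0$---is exactly the natural route and is almost certainly what the cited paper does.

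That said, two steps in your sketch are not yet right as written. In Case~(1) you assert that an abelian image in $\PGL_{d+1}(\bbC)$ is automatically reducible and, after a finite cover, splits into flat line bundles. Neither clause is true without further argument: abelian projective representations need not lift to linear ones (Heisenberg-type cocycles), and commuting linear operators are simultaneously \emph{triangularizable}, not diagonalizable, so no finite cover produces a splitting. What saves you is that the commutator scalars satisfy $c_{ij}^{\,d+1}=1$ by taking determinants, so a finite \'etale cover does linearize $\rho$; then triangularization gives a flat \emph{line-bundle quotient} of $T_{X''/T''}|_s$, which is all you actually need for the degree-zero contradiction via Proposition~\ref{cri0}. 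You should rewrite the step in these terms.

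In Case~(2) the phrase ``has projective monodromy with abelian image'' is doing too much work. Numerically projectively flat is a numerical condition and does not hand you a representation of $\pi_1$ for free; you need a structure theorem (of Demailly--Peternell--Schneider/Simpson type) to the effect that such bundles are built from hermitian-flat pieces, and only then can you invoke the abelian hypothesis on $\pi_1(X)\cong\pi_1(T)$ to split off a flat quotient. You flag this yourself as the main obstacle, and it is: without that input the argument does not close. Once you supply it, the rest of your degree-zero contradiction goes through.
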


\noindent  When $\dim T>0$, we established in \cite[Corollary~1.5]{LiuOuYang2020} the  existence of non-zero symmetric differentials.

\begin{cor}
    \label{cor:existence-symmetric-forms}
    Let $X$ be a projective manifold whose tangent bundle contains a  locally free strictly nef subsheaf. If $X$ is not isomorphic to a projective space, then $X$ has a non-zero symmetric differential, i.e. $H^0(X,\Sym^i\Omega_X)\not=0$ for some $i>0$.
\end{cor}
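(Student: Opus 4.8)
The plan is to deduce this directly from the structure theorem (Theorem \ref{thm:main-theorem}) together with its refinement into cases (1) and (2), reducing the production of symmetric differentials on $X$ to a geometric property of the base $T$ of the $\bbP^d$-bundle $\varphi\colon X\to T$. First I would observe that, since $X$ is assumed \emph{not} isomorphic to a projective space, Corollary \ref{simply-connected-Pn} forces $\dim T>0$; indeed if $\dim T=0$ then $X$ itself is a projective space. Thus we are in the genuinely fibered situation, and by Theorem \ref{thm:main-theorem} the base $T$ is a hyperbolic projective manifold of positive dimension. The key idea is that a hyperbolic projective manifold has big cotangent bundle behavior strong enough to produce symmetric differentials, and that these pull back along $\varphi$ to nonzero symmetric differentials on $X$.

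The main steps I would carry out are the following. \emph{Step 1:} Reduce to the base. For the projection $\varphi\colon X\to T$ one has the relative cotangent sequence
\[
0\longrightarrow \varphi^*\Omega_T\longrightarrow \Omega_X\longrightarrow \Omega_{X/T}\longrightarrow 0,
\]
so $\varphi^*\Omega_T$ is a subsheaf of $\Omega_X$, and hence $\varphi^*\Sym^i\Omega_T\hookrightarrow \Sym^i\Omega_X$. Because $\varphi$ is a $\bbP^d$-bundle with connected fibers, the projection formula gives $H^0(X,\varphi^*\Sym^i\Omega_T)\cong H^0(T,\Sym^i\Omega_T)$, so it suffices to produce a nonzero symmetric differential on $T$. \emph{Step 2:} Produce symmetric differentials on the hyperbolic base $T$. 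Here I would invoke that a smooth projective manifold $T$ with $\dim T>0$ that is hyperbolic (in particular of general type, as recorded in Theorem \ref{thm:main-theorem1}) satisfies $H^0(T,\Sym^i\Omega_T)\neq 0$ for some $i>0$; for a manifold of general type this follows since $K_T=\det\Omega_T$ is big, so a suitable symmetric power of $\Omega_T$ acquires sections, or more directly from the positivity of $\Omega_T$ implied by hyperbolicity. \emph{Step 3:} Conclude that $H^0(X,\Sym^i\Omega_X)\supseteq H^0(X,\varphi^*\Sym^i\Omega_T)\cong H^0(T,\Sym^i\Omega_T)\neq 0$ for the same $i>0$.

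The step I expect to be the main obstacle is Step 2: extracting an honest nonzero symmetric differential from the hyperbolicity of $T$. Hyperbolicity by itself is an analytic/Kobayashi-metric statement, and translating it into the algebraic conclusion $H^0(T,\Sym^i\Omega_T)\neq 0$ requires the finer information available in the present setting — namely that $T$ is of general type (so $K_T$ is big) and, more strongly, that the structure theorem identifies $T$ as the base of a fibration arising from a strictly nef subsheaf, which is what underlies the positivity of $\Omega_T$. The cleanest route is to use that $T$ is of general type: then some tensor power of $K_T$, hence some symmetric power of $\Omega_T$, has sections. I would make sure to cite the refined version of the structure theorem (the "general type" clause of Theorem \ref{thm:main-theorem1}) to secure this, so that the analytic notion of hyperbolicity is not invoked directly for the algebraic conclusion. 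The remaining steps are formal consequences of the relative cotangent sequence and the projection formula.
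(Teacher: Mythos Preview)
Your reduction in Step~1 (pulling back symmetric differentials from $T$ via the relative cotangent sequence and the projection formula) is fine, and so is the observation that $\dim T>0$ once $X\not\cong\bbP^n$. The genuine gap is Step~2. You write that ``$T$ is of general type: then some tensor power of $K_T$, hence some symmetric power of $\Omega_T$, has sections.'' This implication is false when $\dim T\geqslant 2$: a section of $K_T^{\otimes m}=(\det\Omega_T)^{\otimes m}$ is \emph{not} a section of any $\Sym^i\Omega_T$, and there is no general inclusion of the former into the latter. Concretely, every smooth hypersurface $S\subset\bbP^3$ of degree $d\geqslant 5$ is of general type, yet $H^0(S,\Sym^m\Omega_S)=0$ for all $m\geqslant 1$; for $d$ sufficiently large such $S$ are even Kobayashi hyperbolic. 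So neither ``general type'' nor ``hyperbolic'' by itself produces symmetric differentials on $T$, and your Step~2 cannot be completed as stated.

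The argument in \cite{LiuOuYang2020} does not try to find symmetric differentials on $T$ from its birational type; it instead exploits the extra structure recorded just after Theorem~\ref{thm:main-theorem}: in case~(1) the bundle $X\to T$ is \emph{flat}, and in case~(2) the subsheaf $\sF$ is \emph{numerically projectively flat}. Either way one obtains a (projective) representation of $\pi_1(X)$, and one checks that if $\dim T>0$ this representation has infinite image (a finite monodromy would, after an \'etale cover, trivialise the flat structure and contradict strict nefness). One then invokes the theorem of Brunebarbe--Klingler--Totaro: a compact K\"ahler manifold admitting a linear representation of $\pi_1$ with infinite image carries a non-zero symmetric differential. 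This is the missing idea. If you want to keep your Step~1 framework, you must replace Step~2 by this representation-theoretic input rather than by any statement about $K_T$ being big.
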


\def\cprime{$'$} %newcommand of prime over letters
\renewcommand\refname{Reference}
\bibliographystyle{alpha}
\bibliography{strcitlynef}

\end{document}